\newcommand*\norm[1]{ \left|\left| #1 \right|\right| }
\newcommand*\Z{ \mathbb{Z} }
\newcommand*\Q{ \mathbb{Q} }
\newcommand*\R{ \mathbb{R} }
\newcommand*\C{ \mathbb{C} }
\newcommand*\N{ \mathbb{N} }
\newcommand\set[1]{\left\{ #1 \right\}}
\newcommand\INNERPROD[2]{\left\langle #1, #2 \right\rangle}
\newcommand*\FLOOR[1]{ \left\lfloor #1 \right\rfloor }
\DeclareMathOperator{\supp}{supp}
\DeclareMathOperator{\IM}{Im}
\theoremstyle{theorem}
\newtheorem{mydef}{Definition}[section]
\newtheorem{Remark}{Remark}
\theoremstyle{theorem}
\newtheorem{mythm}{Theorem}[section]
\newtheorem{mylemma}{Lemma}[section]
\newtheorem{mycor}{Corollary}[section]
\newtheorem{myq}{Question}
\title{Fine dimensional properties of spectral measures}
\author{Michael Landrigan \and Matthew Powell}
\date{\today}
\begin{document}
\maketitle

\begin{abstract}
Operators with zero dimensional spectral measures appear naturally in the theory of ergodic Schr\"odinger operators. We develop the concept of a complete family of Hausdorff measure functions in order to analyze and distinguish between these measures with any desired precision.
We prove that the dimension of spectral measures of half-line operators with positive upper Lyapunov exponent are at most logarithmic for every possible boundary phase.
We show that this is sharp by 
constructing an explicit operator whose spectral measure obtains this dimension. 
We also extend and improve some basic results from the theory of rank one perturbations and quantum dynamics to encompass generalized Hausdorff dimensions.

\end{abstract}

\section{Introduction}

The classification of measures using the classical power-law Hausdorff measures and dimensions has found many applications within spectral theory (\cite{DamTch1, DelRioJitLastSim, HLQZ2019, JitomirskayaKrasovsky, LanaLast, LanaLast2, jlt, LanaMavi1, Tch2, TCHPotentials} and others). While this classification theory has been very useful in many situations, notably when the Hausdorff dimension is positive, it has not been general enough to understand the differences between zero-dimensional spectra. This has been explored in recent papers by Mavi \cite{Mavi1}, who studied logarithmic dimension bounds for the disordered Holstein model, and Avila, Last, Shamis, and Zhou  \cite{ALSZ21}, who studied the modulus of continuity of the integrated density of states for the almost Mathieu operator using a logarithmic dimension. 

Our primary purpose in this paper is to develop general tools to study these and even finer spectral questions. Explicitly, we consider different kinds of singular-continuous measures based on a more general notion of Hausdorff measure and dimension. The relevant definitions are discussed in section \ref{section:IntroDefs}. 

One of the advantages of our approach is that it allows us to distinguish between measures that are classically termed "zero-dimensional". These broad questions are very relevant to the study of quantum dynamics, where the fractal properties of spectral measures are usually connected with anomalous transport properties (e.g. \cite{BarbarouxGerminet, Last}), while "zero-dimensional" spectral measures naturally occur when studying ergodic Schr\"odinger operators with positive Lyapunov exponent (see e.g. \cite{LanaHan1, LanaLast, LanaLast2, LanaMavi1, SimonDim}). In particular, a general result due to Simon \cite{SimonDim} says that the spectral measures, $\mu_\theta,$ associated to an ergodic family of Schr\"odinger operators, $H_\theta,$ on $l^2(\Z)$ with positive Lyapunov exponent are supported on sets of logarithmic capacity 0. This implies that the spectral measures are zero-dimensional. 

Results pertaining to dynamics have always been closely tied to the dimensional characteristics of spectral measures, so a finer distinction between dimensions should also provide additional tools to strengthen dynamics results. Additionally, recent work by Jitomirskaya and Liu \cite{JitoLiuRef}  leads us to expect that the existence of phase resonances in quasiperiodic models implies very deeply zero-dimensional spectral measures whose dimensional properties cannot be well understood with classical notions, or even the $\log$-dimension which was developed by one of the authors in his thesis \cite{LandriganThesis} and has been studied in recent papers \cite{ALSZ21, Mavi1}.

The generality of our analysis is only possible because of our development and exploration of a {\it complete family of Hausdorff dimension functions} (Definition \ref{CompFam}). 
In particular, a key technical component of our theory is Theorem \ref{SUBHDIM}, which only discusses a single Hausdorff measure. This theorem only becomes useful in practice once we restrict our attention to a suitable collection of Hausdorff measures, rather than all possible Hausdorff measures, since it is not possible to compare all Hausdorff measures to one another. This has been done in the past by considering powers of suitable gauge functions, such as $\log(1/t)^{-1},$ but this is not suitable for our fine analysis; for example, the dimension of a set with respect to the two families $\mathcal{F}_1 = \set{\log(1/t)^{-\alpha}: \alpha > 0}$ and $\mathcal{F}_2 = \set{(\log(1/t)\log\log(1/t))^{-\alpha}: \alpha > 0}$ will always coincide. Determining which of the two is "closer" to the actual dimension requires a more general type of family.

While we are interested in an abstract theory, we are especially motivated by two phenomena that we know yield zero-dimensional spectra:  
\begin{enumerate}
\item Schr\"odinger operators with positive Lyapunov exponent;
\item Local perturbations of systems with exponentially localized eigenfunctions.
\end{enumerate}

We also extend the theory of quantum dynamics to our more general setting, but we save applications to our sequel.
We will also save the study of operators with positive Lyapunov exponent for our sequel, and here we will instead consider the case where the upper Lyapunov exponent is positive (see \eqref{Lupper} for the relevant definition).

We are motivated by one particular question when considering the regime of positive upper Lyapunov exponent:
\begin{myq}
Does positive upper Lyapunov exponent imply an upper bound on how singular the spectral measure must be?
\end{myq}

In 1999, Jitomirskaya and Last  \cite{LanaLast} proved that spectral measures for half-line Schr\"odinger operators with phase boundary condition $\theta$ and positive upper Lyapunov exponent must be zero Hausdorff dimensional (in the classical sense of Hausdorff dimension) for every $\theta.$ 

In 2001, one of the authors \cite{LandriganThesis} introduced the notion of logarithmic dimension, and proved that spectral measures for half-line Schr\"odinger operators with phase boundary condition $\theta$ and positive upper Lyapunov exponent must have logarithmic dimension at most 1 for every $\theta.$ The results of the thesis \cite{LandriganThesis} were never published previously and are incorporated here. In our current framework, the logarithmic dimension coincides with $\dim_\mathcal{F}$ when $\mathcal{F} = \set{\log(1/t)^{-\alpha}: 0 < \alpha < \infty}.$

In 2007, Simon \cite{SimonDim} proved that, given a family of ergodic Schr\"odinger operators $H_\theta$ with positive Lyapunov exponent, the spectral measure $\mu_\theta$ must be supported on a set with zero logarithmic capacity for a.e. $\theta.$ The ergodicity and positive Lyapunov exponent conditions in \cite{SimonDim} are more restrictive than our requiring positive upper Lyapunov exponent. Hence, Simon's result still leaves us with the question of what happens in the setting of positive upper Lyapunov exponent, as well as what happens on the excluded Lebesgue null set.

These results lead us to consider the following, more refined question:
\begin{myq}
Does positive upper Lyapunov exponent imply that the spectral measure is always singular with respect to the $\ln(1/t)^{-1}$-Hausdorff measure?
\end{myq}
In this paper, we answer this in two ways for half-line operators with phase boundary condition $\theta.$ We prove that 
$\dim_\mathcal{F}(\mu_\theta) = \log(1/t)^{-1},$ when considering the family $\mathcal{F} = \set{\log(1/t)^{-\alpha}: 0 < \alpha < \infty},$ (Theorem \ref{SPARSEBARTHM}) and that more generally, the spectral measure must be at least $(\log(1/t)\log\log(1/t)^2)^{-1}$-singular (Theorem \ref{UPPERLYAP}), where both results hold for every phase $\theta.$ Furthermore, we construct half-line operators with phase boundary condition $\theta$ and positive upper Lyapunov exponent for every $\theta$ such that $\dim_\mathcal{F}(\mu_\theta) = \log(1/t)^{-1}$ for Lebesgue a.e. $\theta$ and any complete family of Hausdorff dimension functions, $\mathcal{F},$ such that $\log(1/t)^{-1} \in \mathcal{F}$ (Theorem \ref{SPARSEBARTHM}). These show that the ideal bound lies somewhere between $\log(1/t)^{-1}$ and $(\log(1/t)\log\log(1/t)^2)^{-1}.$ These are the main results of our paper.

This improves the result from \cite{LanaLast} and shows that the bound is sharp for $\log$-dimension, but not necessarily for our more refined notion of dimension. In comparing our results to Simon's \cite{SimonDim}, we are drawn to two major differences: (1) we do not assume ergodicity and (2) we do not exclude a Lebesgue null set. 

The salient point here is that we arrive at a result similar to that in \cite{SimonDim} without the assumption of ergodicity or positivity of the Lyapunov exponent; we just need positive upper Lyapunov exponent. It is possible to view an ergodic family of operators as similar to a family of operators with a phase boundary condition. This surface analogy would lead us to believe that the result in \cite{SimonDim} should be the same as the result in our situation, but this is not the case. Moreover, if there were an analogy between the ergodic parameter and the phase boundary parameter, then Simon's result would lead one to believe that a Lebesgue null set of phases needs to be excluded; our result shows that this is not true. We are able to obtain a logarithmic bound for all boundary phases.

A truer comparison between \cite{SimonDim} and the analysis that can be performed using a complete family of Hausdorff measure functions can be found in upcoming work \cite{MP2021}.

We are also interested in local perturbations of systems with exponentially localized eigenfunctions and the following question:
\begin{myq}
Suppose $A: l^2(\Z^\nu) \to l^2(\Z^\nu)$ is self-adjoint with semi-uniformly localized eigenfunctions, and let $\mu = \mu_0$ be the spectral measure for $\delta_0.$ If $A_\lambda = A + \lambda\INNERPROD{\delta_0}{\cdot}\delta_0$ is a rank one perturbation at the origin, and if $\mu_\lambda$ is the spectral measure for $\delta_0$ associated to $A_\lambda,$ is there an upper bound on how singular $\mu_\lambda$ is?
\end{myq}

In 1996, del Rio, Jitomirskaya, Last, and Simon proved that $\mu_\lambda$ must be zero Hausdorff dimensional (in the classical sense of Hausdorff dimension) for every $\lambda.$ 

We refine this answer in the following way (Theorem \ref{MySULERes}): not only are the spectral measures zero-dimensional, but the spectral measures $\mu_\lambda$ are in fact $(\log(1/t))^{-\nu-\epsilon}$-singular for every $\lambda$ and $\epsilon > 0.$

We also rigorously extend the quantum dynamic theory of Last from the power-law setting to the general Hausdorff dimension setting (section \ref{section:QuantDynam}). A similar result extending quantum dynamics appears in Mavi \cite{Mavi1}. We believe that these results, especially Theorem \ref{POSBDTHM}, can lead to a strengthening of existing dynamics results, as well as very fine results for quasiperiodic models (c.f. \cite{LanaHan1, JitomirskayaLiuMaryland, jlt, LanaMavi1}).

The starting point for our analysis is the decomposition theory of Rogers and Taylor \cite{RT1, RT2}. Classically, any $\sigma$-finite measure can be decomposed into pure point, singular continuous, and absolutely continuous parts via the Lebesgue decomposition theorem; Rogers and Taylor took this further and decomposed the singular continuous part into measures that are singular or continuous with respect to the power-law Hausdorff measures. 

Briefly, a measure $\mu$ is said to have exact power-law dimension $\alpha \in [0,1]$ if and only if $\mu(E) = 0$ for every set $S$ with power-law Hausdorff dimension $\beta < \alpha$ and if $\mu$ is supported on a set a power-law Hausdorff dimension $\alpha.$ In the terms used in this paper, this is equivalent to the upper and lower dimensions with respect to the family $\mathcal{F} = \set{t^\alpha: 0 < \alpha}$ coinciding. 

Measures with exact dimension 0 or 1 are often viewed as "close" to pure point or absolutely continuous measures, respectively, but they do not need to be pure point or absolutely continuous. Relevant examples include the spectral measures of 1D quasiperiodic Schr\"odinger operators. It is known that the spectral measure is 0-dimensional for every irrational frequency, yet there exist frequencies for which the measure is not pure point \cite{LanaLast2}. Much of the work applying this theory to spectral theory has been unable to address how close these measures are to these two extremes. 

It is known, however, that more general Hausdorff measures can be defined by replacing $t^\alpha$ in the definition with a suitable gauge function $\rho(t).$ 

One of the authors has explored a generalization using $\log(1/t)^{-\alpha}$ in the definition of Hausdorff measures to create a logarithmic dimension  and used it to study spectral questions in his thesis \cite{LandriganThesis}. This has already found applications in \cite{DamanikLandrigan1}.  
We take these concepts and generalize them even further using modern ideas into what we believe is the most natural general framework (complete families of Hausdorff measure functions). Using these general families of Hausdorff measures, a similar notion of dimension can be developed to address these more delicate situations. Full details are presented in section \ref{section:IntroDefs}. 

With this theory, we are able to extend the Gilbert-Pearson and Jitomirskaya-Last theories of power-law subordinacy (Theorem \ref{SUBHDIM}), and we prove that half-line operators with positive upper Lyapunov exponent have at most a logarithmic dimension (Theorem \ref{UPPERLYAP}). Moreover, we are able to construct half-line operators that achieve any given dimension for Lebesgue a.e. boundary phase (Theorem \ref{SPARSEBARTHM}). We have not extended this analysis to every boundary phase, but we believe that the removal of a null set of boundary phases is simply a limitation of our proof methods. 


Moreover, in Theorem \ref{UPPERLYAP}, we obtain a $\log\log(1/t)^2$ correction term, which is lacking in all of the existing results that just use a $\log$-dimension. 
The proof heavily relies on the fact that the spectral measure of an operator of the form \eqref{1DFORM} is supported on the set of energies, $E,$ for which there exists a solution $u_1$ to $Hu_1 = Eu_1$ satisfying:
\[u_1(0) = 0 \text{ and } u_1(1) = 1\]
and for every $\delta > 0,$
\begin{equation}\displaystyle{\limsup_{L \to \infty} \frac{\norm{u_1}^2_L}{L(\ln L)^{1 + \delta}} < \infty}, \label{eq:SCHNOL} \end{equation}
where $\norm{\cdot}_L$ is defined by \eqref{LNORM} below. This is the origin of the $\log\log(1/t)^{1 + \delta}$ correction term, and reveals when we expect this correction term to be unnecessary:  whenever we can improve \eqref{eq:SCHNOL} on certain length scales. This is precisely what we do in our proof of part 3 of Theorem \ref{SPARSEBARTHM}.
The potentials constructed in Theorem \ref{SPARSEBARTHM} do not exhibit this correction term for a.e. $\theta,$ so is would be of interest to know if there are potentials that yield spectral measures that achieve a dimension with this correction term.


The rest of our paper is organized in the following way: in section \ref{section:IntroDefs}, we build a general Hausdorff dimension framework, and introduce the major definitions, models, and results of our paper. In section \ref{section:BorelSubTheory}, we relate the Hausdorff dimension of a measure to tangential limits of its Borel transform. In section \ref{section:SubTheory}, we apply these notions to derive a subordinacy theory for half-line operators, proving Theorem \ref{SUBHDIM} and the first part of Theorem \ref{UPPERLYAP}. In section \ref{section:SparseBar}, we analyze the dimension of spectral measures associated to operators with sparse barrier potentials and prove Theorem \ref{SPARSEBARTHM}. In section \ref{section:SULE}, we discuss the behavior of spectral measures under rank-one perturbations and show that, under local perturbations, the spectrum of systems with exponentially localized eigenfunctions remains at most a logarithmic-power dimension. Finally, in section \ref{section:QuantDynam} we use our general Hausdorff dimension framework to extend the quantum dynamics theory of Last \cite{Last} to encompass our more general notion of dimension.


\section{Preliminaries and main results}\label{section:IntroDefs}

Now we will give an overview of the relevant definitions for a discussion of generalized Hausdorff dimension. 

Our analysis begins with the decomposition theory of Rogers and Taylor \cite{RT1, RT2}. The classical Lebesgue decomposition theorem provides a way to decompose any measure into three pieces: an absolutely continuous piece, a singular continuous piece, and a pure point piece. Rogers and Taylor used Hausdorff measures to further decompose the singular continuous piece.

\begin{mydef}
A Hausdorff dimension function, or gauge function, is a strictly increasing differentiable function $\rho: (0,\infty) \to (0,\infty)$ with
	$$\lim_{t\to 0^+} \rho(t) = 0.$$
\end{mydef}

\begin{mydef}
The $\rho$-dimensional Hausdorff measure, $\mu^\rho,$ is defined on the Borel $\sigma$-algebra as 
	$$\mu^\rho(F) := \lim_{\delta\to0} \inf_{\delta\text{-covers}} \set{\sum_{i = 1}^\infty \rho(|F_i|)}.$$ 
\end{mydef}

Observe that if $\rho(t) = t^\alpha$ then we arrive at the usual $\alpha$-dimensional Hausdorff measure.

Consider the family of all Hausdorff dimension functions, $\mathcal{H}$ and the partial order, $\prec,$ on $\mathcal{H}$ given by $\rho \prec \xi$ if and only if 
	\begin{equation}
	\lim_{t\to 0^+} \frac{\rho(t)}{\xi(t)} = \infty.
	\end{equation} 
It is easy to see that if $\lim_{t\to0^+} \frac{\rho(t)}{\xi(t)} = 0$ then $\xi\prec \rho.$  Additionally, we will define an equivalence relation, $\sim,$ on $\mathcal{H}$ by $\rho \sim \xi$ if and only if 
$$0 < \lim_{t\to 0^+} \frac{\rho(t)}{\xi(t)} < \infty.$$
We say that $\rho \precsim \xi$ if and only if $\rho \prec \xi$ or $\rho \sim \xi.$ 

\begin{mydef}\label{CompFam}
We say $\mathcal{F} \subset \mathcal{H}$ is a complete one-parameter family of comparable Hausdorff dimension functions if $\mathcal{F}$ is a totally ordered subset of $\mathcal{H}$ which is order isomorphic to a subinterval $I \subset \R.$ That is, if every pair $\rho, \xi\in \mathcal{F}$ obeys either $\rho\prec\xi$ or $\xi \prec\rho,$ and if there exists an interval $I\subset \R$ such that there is an order-preserving bijection from $\mathcal{F}$ to $I.$ Particularly, we can write $\mathcal{F} = \set{\rho_\alpha: \alpha \in I, \rho_\alpha \prec \rho_\beta \text{ iff } \alpha < \beta}.$ 
\end{mydef}

For simplicity, since these are the only families we will work with in this paper, we will simply call these {\it{comparable families}} or {\it{complete comparable families}.}

\begin{Remark}
We may relax the order isomorphism condition slightly to allow for order isomorphisms with boxes in $\R^n,$ for $1 \leq n \leq \aleph_0,$ along with the lexicographical order. All of our applications, however, use $n = 1.$
\end{Remark}

\begin{Remark}
We require the order isomorphism to be with an interval (or more generally, a box) to avoid the pathological behavior caused by the presence of "gaps", which can be illustrated in two examples:
\begin{enumerate}
\item First, we have the case of $\mathcal{F} = \set{t^\alpha: [0,1] \backslash \Q},$ which is unable to describe the dimension of sets with usual Hausdorff dimension $1/2,$ since the notion of supremum and infimum are not defined on $\mathcal{F}.$
\item Second, we have the case of $\mathcal{F} = \set{t^\alpha: \alpha \in (0,1/3] \cup [2/3, 1]},$ which is unable to describe the dimension of sets with usual Hausdorff dimension $1/2.$
\end{enumerate} 
\end{Remark}

From this point forwards, we will restrict our attention to such families. Typical examples include the usual functions $\set{t^\alpha}_{\alpha \in \R}$ used to define the usual Hausdorff dimension, and even more generally, families of the form $\set{\rho^\alpha}_{\alpha \in \R}$ for $\rho \in \mathcal{H}.$ 

We can use the completely ordered family $(\mathcal{F},\prec)$ to generalize the Hausdorff dimension of sets and measures in a way that reduces to the classical definition when $\mathcal{F} = \set{t^\alpha: 0 < \alpha \leq 1}:$

\begin{mydef}
Let $\mathcal{F} = \set{\rho_\alpha: \alpha \in I\subset [0,\infty), \rho_\alpha \prec \rho_\beta \text{ iff } \alpha < \beta}.$ The $\mathcal{F}$-dimension of a set $F,$ denoted $\dim_\mathcal{F}(F),$ is given by 
\begin{equation} 
\dim_\mathcal{F}(S) = \begin{cases} \rho_{\alpha'} & \alpha' \in I \\ 
0 & \alpha' = -\infty \\
1 & \text{if } \alpha' \not \in I \text{ and } \alpha' > \alpha \text{ for all } \alpha \in I
\end{cases}
\end{equation}
where $\alpha' = \sup\set{\alpha \in I: \mu^{\rho_\alpha}(S) = \infty}.$ 
\end{mydef}

Observe that this is a precise generalization of the normal Hausdorff dimension when $\mathcal{F}$ contains only functions of the form $t^\alpha.$ 

Unlike sets, a measure need not have an $\mathcal{F}$-dimension, which motivates the following definitions: 
\begin{mydef}\label{DEFCONT} \label{DEFSING}
We say that a measure $\mu$ is $\rho$-singular if there exists some set $G$ such that $\mu(\R\backslash G) = 0$ and $\mu^\rho(G) = 0.$ Similarly, we say that a measure $\mu$ is $\rho$-continuous if $\mu(S) = 0$ for every set $S$ with $\mu^\rho(S) = 0.$
\end{mydef}

This leads us the the notion of upper and lower dimension:
\begin{mydef}
The upper $\mathcal{F}$-dimension of a measure $\mu,$ denoted $\dim_\mathcal{F}^+(\mu),$ is given by
\begin{equation} 
\dim_\mathcal{F}^+(\mu) = \begin{cases} \rho_{\beta'}; & \text{if } \beta' \in I \\ 1; & \text{if } \beta' = +\infty \\ 0; & \text{if } \beta' \not \in I \text{ and } \beta' < \alpha \text{ for every } \alpha \in I \end{cases}
\end{equation}
where $\beta' = \inf\set{\alpha \in I: \mu \text{ is } \rho_\alpha\text{-singular}}.$ Similarly, we define the lower $\mathcal{F}$-dimension of a measure $\mu,$ denoted $\dim_\mathcal{F}^-(\mu)$ is given by
\begin{equation} 
\dim_\mathcal{F}^-(\mu) = \begin{cases} \rho_{\gamma'}; & \text{if } \gamma' \in I \\ 0; & \text{if } \gamma' = -\infty \\ 1 & \text{if } \gamma' \not \in I \text{ and } \gamma' > \alpha \text{ for all } \alpha \in I \end{cases}
\end{equation}
where $\gamma' = \sup\set{\alpha \in I: \mu \text{ is } \rho_\alpha\text{-continuous}}.$
\end{mydef}

We can now define the $\mathcal{F}$-dimension of a Borel measure $\mu.$ 

\begin{mydef}\label{DEFGHDIM}
The $\mathcal{F}$-dimension of a Borel measure $\mu,$ denoted $\dim_\mathcal{F}(\mu),$ is given by
\begin{equation}
\dim_\mathcal{F}(\mu) = \begin{cases} \dim_\mathcal{F}^+(\mu); & \text{ if } \dim_\mathcal{F}^+(\mu) = \dim_\mathcal{F}^-(\mu)\\
\text{undefined;} & \text{ if } \dim_\mathcal{F}^+(\mu) \ne \dim_\mathcal{F}^-(\mu)
\end{cases}.
\end{equation}
\end{mydef}

Related concepts we will occasionally use are the idea of zero-dimensional and positive-dimensional Hausdorff measure functions.

\begin{mydef}
We say a function $\rho \in \mathcal{H}$ is a zero-dimensional Hausdorff dimension function if $\rho \prec t^\alpha$ for every $\alpha > 0.$
Analogously, we say a function $\xi \in \mathcal{H}$ is a positive-dimensional Hausdorff dimension function if $t^\alpha \prec \xi$ for some $\alpha > 0.$
\end{mydef}

Our approach here is, as far as we know, novel. Past work in this direction has always dealt with studying the singularity and continuity of a measure with respect to families of the form $\set{\rho^\alpha}_{\alpha\in I},$ whereas our notion of a complete family of Hausdorff dimension functions allows us to consider more varied families, which allows us to gain sharper results.

\subsection{1D Operators}
First, we will examine dimensional properties of discrete Schr\"odinger operators on the half-line. We define
\begin{equation}\label{1DFORM}
(H_\theta \psi)(n) = \psi(n - 1) + \psi(n + 1) + V(n) \psi(n),
\end{equation}
along with a phase boundary condition
\begin{equation} \label{BOUNDARYCONDITION}
\psi(0) \cos\theta + \psi(1)\sin\theta = 0,
\end{equation}
where $-\frac \pi 2 < \theta \leq \frac \pi 2$ and the potential $V = \set{V(n)}_{n = 1}^\infty$ is a sequence of real numbers. The study of operators of the form (\ref{1DFORM}) along with the boundary condition (\ref{BOUNDARYCONDITION}) is equivalent to the study of (\ref{1DFORM}) with a Dirichlet boundary condition
\begin{align}\label{DIRICH}
\begin{split}
\psi(0) &= 0 \\
\psi(1) &= 1
\end{split}
\end{align}
along with a rank-one perturbation at the origin
\begin{equation}
V(1) \mapsto V(1) - \tan\theta.
\end{equation}
So, without loss of generality, we will confine our attention to operators of the form (\ref{1DFORM}) on $l^2(\Z^+)$ along with the Dirichlet boundary condition (\ref{DIRICH}) and interpret the boundary phase as applying the corresponding rank-one perturbation at the origin.

For these operators, it is known that the vector $\delta_1,$ which is 1 for $n = 1$ and 0 otherwise, is cyclic, so the spectral problem reduces to the study of the spectral measure $\mu = \mu_{\delta_1}.$ The behavior of this spectral measure is related to the behavior of the Weyl-Titchmarsh $m$-function, which in our case coincides with the Borel transform of $\mu:$
\begin{equation}\label{eq:BORELTRANSFORM}
F_\mu(z) = \int_\R \frac {d\mu(x)}{x - z}.
\end{equation}
When there is no ambiguity, we will usually omit the dependence on $\mu$ and express the Borel transform as $F(z).$ For a full discussion of this relationship, we refer the reader to Simon \cite{SIMON}.

Our first results will extend the Jitomirskaya-Last theory of power-law subordinacy \cite{LanaLast}, which is itself an extension of the Gilbert-Pearson theory \cite{Gilbert1, GP1, KP1}, both of which relate spectral properties of the operator (\ref{1DFORM}) to solutions of the corresponding Schr\"odinger equation
\begin{equation}\label{SCHEQN}
u(n - 1) + u(n + 1) + V(n) u(n) = Eu(n).
\end{equation}

More specifically, we will let $\norm{u}_L$ be the norm of $u$ over the lattice interval of $L.$ That is, 
\begin{equation}\label{LNORM}
\norm{u}_L = \left(\sum_{n = 1}^{\FLOOR{L}} (|u(n)|^2 + (L - \FLOOR{L})|u(\FLOOR{L} + 1)|^2)\right)^{1/2},
\end{equation}
where $\FLOOR{L}$ is the integer part of $L.$ We say that a solution $u$ of (\ref{SCHEQN}) is called subordinate if 
\begin{equation}
\lim_{n \to \infty} \frac{\norm{u}_L}{\norm{v}_L} = 0
\end{equation}
for any other linearly independent solution $v.$
The Gilbert-Pearson theory related the absolutely continuous part of the spectral measure $\mu$ to those energies $E$ for which (\ref{SCHEQN}) has no subordinate solutions; likewise, the singular part of the spectral measure $\mu$ is supported on the set of energies for which the solutions to (\ref{SCHEQN}) with the Dirichlet boundary condition are subordinate. The Jitomirskaya-Last theory refined the treatment of the singular part of the spectral measure to consider different kinds of singular-continuous spectral measures based on the classification of those measures with respect to the usual power-law Hausdorff measures and dimensions using a decomposition theory developed by Rogers and Taylor \cite{RT1, RT2}. Our treatment goes further still and considers decompositions with respect to arbitrary families of Hausdorff measures, not just the usual power-law measures.

Given $H_\theta$ of the form (\ref{1DFORM}), and $E\in \R,$ we define $u_1$ to be the solution to (\ref{SCHEQN}) obeying the Dirichlet boundary condition:
\begin{align}
\begin{split}
u_1(0) &= 0 \\
u_1(1) &= 1
\end{split}
\end{align}
and let $u_2$ be the solution of (\ref{SCHEQN}) obeying the orthogonal boundary condition:
\begin{align}
\begin{split}
u_2(0) &= 1 \\
u_2(1) &= 0.
\end{split}
\end{align}
Given any $\epsilon > 0,$ we define the length scale $L(\epsilon) \in (0,\infty)$ as the length that yields the equality 
\begin{equation}
\norm{u_1}_{L(\epsilon)}^{-1}\norm{u_2}_{L(\epsilon)}^{-1} = 2\epsilon.
\end{equation}

Another useful tool in studying operators of the form (\ref{1DFORM}) is the $n$-step {\it transfer matrix } $\Phi_n(\theta, E).$ This is the matrix 
\begin{equation} \Phi_n(\theta, E) = \begin{pmatrix} u_1(n+1) & u_2(n+1) \\ u_1(n) & u_2(n)\end{pmatrix}.\end{equation}
With this, we can define the {\it upper Lyapunov exponent}, 
\begin{equation}\label{Lupper}
L^*(\theta, E) = \limsup_{n \to \infty} \frac 1 n \ln \norm{\Phi_n(\theta, E)}.
\end{equation}

We know of no explicit link between the operator $H$ and the local scaling behavior of the spectral measure $\mu$ in this regime, so we begin with an important technical result relating the generalized Hausdorff dimension of a Borel measure to growth properties of its Borel transform:
\begin{mythm}
\label{HAUSBOREL}
Define $A_0 = \set{0}, A_1 = (0,\infty), $ and $A_2 = \set{\infty}.$  Suppose $\rho$ is a Hausdorff dimension function satisfying $\rho(t) \prec t.$ We have 
	$$\limsup_{\epsilon \to 0^+} \frac{\mu((x - \epsilon,x+\epsilon))}{\rho(\epsilon)} \in A_i$$ 
if and only if $$\limsup_{\epsilon \to 0^+} \frac{\epsilon}{\rho(\epsilon)}\IM F(x + i\epsilon) \in A_i,$$ with $i = 0,1,2.$
\end{mythm}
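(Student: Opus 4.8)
The plan is to compare the two $\limsup$s directly. Writing $F=F_\mu$, recall that the imaginary part of the Borel transform is the Poisson integral of $\mu$,
$$\IM F(x+i\epsilon)=\int_\R \frac{\epsilon}{(x-y)^2+\epsilon^2}\,d\mu(y),$$
so setting $m(r)=\mu((x-r,x+r))$ and abbreviating
$$M=\limsup_{\epsilon\to0^+}\frac{m(\epsilon)}{\rho(\epsilon)},\qquad P=\limsup_{\epsilon\to0^+}\frac{\epsilon}{\rho(\epsilon)}\IM F(x+i\epsilon),$$
the claim is that $M$ and $P$ lie in the same member of the partition $A_0\cup A_1\cup A_2=[0,\infty]$. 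I would prove this by establishing the two-sided comparison $\tfrac12 M\le P\le CM$ for a constant $C$ (with the usual conventions at $0$ and $\infty$), which immediately forces $M$ and $P$ into the same $A_i$.

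For the lower bound I would simply restrict the Poisson integral to the interval $(x-\epsilon,x+\epsilon)$, where the kernel is at least $\tfrac{1}{2\epsilon}$; this gives $\epsilon\,\IM F(x+i\epsilon)\ge\tfrac12 m(\epsilon)$ and hence $P\ge\tfrac12 M$. This one inequality already disposes of several directions: it transfers $M=\infty$ to $P=\infty$ (so the $A_2$ case, including when $x$ is an atom of $\mu$, is automatic) and shows $P=0\Rightarrow M=0$.

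The real content is the reverse inequality $P\le CM$. Here I would decompose the Poisson integral into the central interval and the dyadic annuli $2^k\epsilon\le|x-y|<2^{k+1}\epsilon$, on which the kernel is at most $4^{-k}\epsilon^{-1}$, to obtain
$$\epsilon\,\IM F(x+i\epsilon)\le m(\epsilon)+\sum_{k\ge0}4^{-k}\,m(2^{k+1}\epsilon)$$
(equivalently, an integration by parts rewrites $\epsilon\,\IM F$ as $\int_0^\infty m(r)K_\epsilon(r)\,dr$ against the probability kernel $K_\epsilon(r)=2\epsilon^2 r/(r^2+\epsilon^2)^2$ concentrated near $r\approx\epsilon$). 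Fixing $\eta>0$ and $\epsilon_0$ with $m(r)\le(M+\eta)\rho(r)$ for $r\le\epsilon_0$, I would split the sum at the index $N=N(\epsilon)$ where $2^{N}\epsilon\approx\epsilon_0$. The tail $k\ge N$ is bounded using only $m\le\mu(\R)<\infty$ and the factor $4^{-k}$, giving a contribution $\lesssim\mu(\R)\,\epsilon^2/\epsilon_0^2$; after division by $\rho(\epsilon)$ this tends to $0$ precisely because $\rho\prec t$ forces $\epsilon^2/\rho(\epsilon)=\epsilon\cdot(\epsilon/\rho(\epsilon))\to0$. This is the single place the hypothesis $\rho(t)\prec t$ enters.

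The main obstacle is the head $k<N$, where all scales are small and I would use $m(2^{k+1}\epsilon)\le(M+\eta)\rho(2^{k+1}\epsilon)$, reducing everything to the uniform estimate
$$\sum_{k\ge0}4^{-k}\,\frac{\rho(2^{k+1}\epsilon)}{\rho(\epsilon)}\le C.$$
This is a small-scale doubling statement for the gauge: if $\rho(2t)\le D\rho(t)$ for small $t$ with $D<4$, the series is dominated by a convergent geometric series $\sum_k(D/4)^k$, yielding $P\le C(M+\eta)$ and then, as $\eta\to0$, both $P\le CM$ and $M=0\Rightarrow P=0$. The delicate point is that $\rho\prec t$ controls only the size of $\rho$, not the ratios $\rho(2t)/\rho(t)$, so here I would either invoke the standing regularity of the gauge functions in play or verify the doubling bound $\rho(2t)\le D\rho(t)$, $D<4$, directly for them (for $\rho=t^\alpha$ with $\alpha\le1$ one has $D=2^\alpha\le2$, while for logarithmic and iterated-logarithmic gauges $D\to1$). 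Establishing this comparison is the crux; granting it, the two bounds combine to $\tfrac12 M\le P\le CM$ and the theorem follows.
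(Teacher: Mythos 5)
Your proposal is correct in outline and is essentially the same argument as the paper's, in discrete rather than continuous form. The lower bound (restricting the Poisson integral to $(x-\epsilon,x+\epsilon)$, where the kernel is $\ge 1/(2\epsilon)$) is literally the paper's first step, and it disposes of the same implications you list. For the converse direction the paper also reduces to integrating the tail distribution $M_\mu^\delta(x)=\mu((x-\delta,x+\delta))$ against a kernel over scales: it bounds $\IM F\le |F|\le\int\bigl[(x-y)^2+\epsilon^2\bigr]^{-1/2}d\mu(y)$, discards the far region using $\epsilon/\rho(\epsilon)\to 0$ exactly as you discard your tail $k\ge N$, integrates by parts to get $\int_0^1 \delta(\epsilon^2+\delta^2)^{-3/2}M_\mu^\delta(x)\,d\delta$, inserts $M_\mu^\delta\le C\rho(\delta)$, and splits the integral at $\delta=\epsilon$ — this is precisely the continuous version of your decomposition into the central interval plus annuli, which you yourself note. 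One small point in your favor: by working with $\IM F$ and the genuine Poisson kernel rather than $|F|$, your per-annulus weight is $4^{-k}$ where the paper's chain of inequalities effectively only gives $2^{-k}$, so your scale-sum is the easier one to control.

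The one substantive divergence is how the sum over scales is closed, and this is where your proof is not yet complete as a proof of the theorem \emph{as stated}. You correctly identify that $\sum_k 4^{-k}\rho(2^{k+1}\epsilon)/\rho(\epsilon)\le C$ does not follow from $\rho\prec t$ (which constrains only the size of $\rho$, not its ratios across nearby scales: one can build an increasing gauge comparable to $\sqrt{t}$, hence $\prec t$, whose doubling ratio exceeds $4$ along a sequence of scales), so your doubling hypothesis $\rho(2t)\le D\rho(t)$ with $D<4$ is a genuine additional assumption. The paper instead finishes by bounding the remaining piece by $\rho(\epsilon)^{-1}\int_\epsilon^1\rho(\delta)\delta^{-1}\,d\delta$ and invoking L'H\^opital together with $\rho\prec t$; note that this step also silently requires regularity of the gauge beyond monotonicity (convergence of $\epsilon\rho'(\epsilon)/\rho(\epsilon)$, say), so the theorem is really being used only for gauges with such regularity. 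For every family actually appearing in the paper — $t^\alpha$ with $\alpha\le 1$, reciprocal logarithms and iterated logarithms, and their powers — your doubling constant is at most $2$, so your argument closes for all the intended applications; but you should state the doubling (or an equivalent regularity) condition explicitly as a hypothesis, or verify it for the family $\mathcal{F}$ at hand, rather than leave it as an unresolved alternative.
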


Our first core result is a subordinacy theory extending the work of Jitomirskaya-Last \cite{LanaLast, LanaLast2}, which links the generalized Hausdorff dimension of the spectral measure $\mu$ to growth properties of $u_1$ and $u_2:$
\begin{mythm}\label{SUBHDIM}
Let $u_1$ and $u_2$ be solutions of the equation $Hu = Eu$ for $E \in \R$ obeying $u_1(0) = 0, u_1(1) = 1, u_2(0) = 1, $ and $u_2(1) = 0.$ Let $\rho(t)$ be a Hausdorff measure function. We have 
	$$\limsup_{\epsilon \to 0} \frac{\epsilon}{\rho(\epsilon)}F(E + i\epsilon) = \infty$$ 
if and only if 
	$$\liminf_{L\to \infty} \rho(\norm{u_1}^{-1}_L\norm{u_2}^{-1}_L)\norm{u_1}_L^2 = 0.$$
\end{mythm}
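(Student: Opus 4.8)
The statement relates a $\limsup$ involving the Borel transform $F$ to a $\liminf$ involving the solution norms, with the gauge $\rho$ mediating both. The plan is to reduce this to Theorem \ref{HAUSBOREL}, which already translates the Borel-transform condition into a purely measure-theoretic statement about local scaling of $\mu$, and then to connect that local scaling to the growth of $u_1$ and $u_2$ via the classical Jitomirskaya-Last comparison between $\IM F(E+i\epsilon)$ and the solution norms at the matched length scale $L(\epsilon)$. Concretely, I would first invoke Theorem \ref{HAUSBOREL} with $i=2$ (the set $A_2=\set{\infty}$) to see that $\limsup_{\epsilon\to0}\frac{\epsilon}{\rho(\epsilon)}\IM F(E+i\epsilon)=\infty$ is equivalent to $\limsup_{\epsilon\to0}\frac{\mu((E-\epsilon,E+\epsilon))}{\rho(\epsilon)}=\infty$. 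This handles the imaginary part; since $F(E+i\epsilon)$ and $\IM F(E+i\epsilon)$ have comparable growth rates along the relevant subsequences for the purposes of an infinite $\limsup$ (the imaginary part dominates in the singular regime), I expect the passage from $F$ to $\IM F$ to require a short separate argument bounding $|\mathrm{Re}\,F|$ against $\IM F$, or else restricting to energies where this holds $\mu$-a.e.

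The heart of the argument is the change of variables between the spectral parameter $\epsilon$ and the length scale $L(\epsilon)$, defined by $\norm{u_1}_{L(\epsilon)}^{-1}\norm{u_2}_{L(\epsilon)}^{-1}=2\epsilon$. The plan is to import the fundamental Jitomirskaya-Last estimate, which in its power-law-free form states that there exist universal constants so that
\[
\IM F(E+i\epsilon) \asymp \frac{\norm{u_2}_{L(\epsilon)}}{\norm{u_1}_{L(\epsilon)}}.
\]
Using the defining relation for $L(\epsilon)$, namely $\epsilon = \tfrac12\norm{u_1}_{L(\epsilon)}^{-1}\norm{u_2}_{L(\epsilon)}^{-1}$, I would compute
\[
\epsilon\,\IM F(E+i\epsilon) \asymp \frac{1}{\norm{u_1}_{L(\epsilon)}\norm{u_2}_{L(\epsilon)}}\cdot\frac{\norm{u_2}_{L(\epsilon)}}{\norm{u_1}_{L(\epsilon)}} = \frac{1}{\norm{u_1}_{L(\epsilon)}^2}.
\]
Substituting $\rho(\epsilon)=\rho\bigl(\tfrac12\norm{u_1}_{L(\epsilon)}^{-1}\norm{u_2}_{L(\epsilon)}^{-1}\bigr)$, the quantity $\frac{\epsilon}{\rho(\epsilon)}\IM F(E+i\epsilon)$ becomes comparable to $\bigl[\rho\bigl(\norm{u_1}_{L}^{-1}\norm{u_2}_{L}^{-1}\bigr)\norm{u_1}_L^2\bigr]^{-1}$ at $L=L(\epsilon)$. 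Thus the left-hand $\limsup$ being $\infty$ is equivalent to the reciprocal quantity having $\liminf$ equal to $0$, which is exactly the right-hand condition.

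The main obstacle, and the step requiring the most care, is ensuring that $L(\epsilon)$ ranges over all sufficiently large $L$ (i.e. that $\epsilon\mapsto L(\epsilon)$ is a genuine monotone bijection onto a neighborhood of infinity) so that the $\limsup$ as $\epsilon\to0$ and the $\liminf$ as $L\to\infty$ correspond under the substitution. This needs the map $L\mapsto\norm{u_1}_L^{-1}\norm{u_2}_L^{-1}$ to tend to $0$ and to be (eventually) monotone or at least to sweep out all small values; the Jitomirskaya-Last framework guarantees the former, but I would verify that the two one-sided limits genuinely match rather than merely dominate each other, which is where one must track constants and handle the mild discontinuities of $L(\epsilon)$ carefully. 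A secondary technical point is that Theorem \ref{HAUSBOREL} requires the hypothesis $\rho(t)\prec t$; I would either assume this is inherited in the regime of interest or check that the conclusion is vacuous or trivial when $\rho$ grows at least linearly, since then the spectral measure cannot be $\rho$-continuous and both sides degenerate consistently.
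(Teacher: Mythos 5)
Your proposal is correct and matches the paper's route: the paper's proof of Theorem \ref{SUBHDIM} is a one-line citation of Theorem 1 of \cite{LanaLast} (the two-sided estimate $|F(E+i\epsilon)| \asymp \norm{u_2}_{L(\epsilon)}/\norm{u_1}_{L(\epsilon)}$) together with Theorem \ref{HAUSBOREL}, which is precisely the substitution argument you carry out at the matched length scale $L(\epsilon)$. Your worry about passing between $F$ and $\IM F$ is moot for the statement as written (the Jitomirskaya--Last estimate applies directly to $|F|$), and your remaining caveats about monotonicity of $L\mapsto\norm{u_1}_L^{-1}\norm{u_2}_L^{-1}$ and the factor of $2$ inside $\rho$ are exactly the points the paper leaves implicit.
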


Our second key result is a bound on the upper spectral dimension of a half-line operator with positive upper Lyapunov exponent:
\begin{mythm} \label{UPPERLYAP}
Let $\mathcal{F} = \set{\rho_\alpha: \alpha \in I}$ be a family of comparable Hausdorff dimension functions such that for some $\delta, \epsilon > 0,$ we have $f_\delta(t), g(t)^{1 - \epsilon}  \in \mathcal{F},$ where 
\begin{equation}
f_\delta(t) = \frac{1}{\ln(1/t)(\ln(\ln(1/t)))^{1 + \delta}}
\end{equation}
and
\begin{equation}
g(t) = \frac{1}{\ln(1/t)}.
\end{equation}
 If the upper Lyapunov exponent is positive for every $E$ in some Borel set $A$ then $\dim_\mathcal{F}^+(\mu(A\cap \cdot)) \precsim f_\delta(t).$ Moreover, there exists an operator of the form (\ref{1DFORM}) with positive upper Lyapunov exponent whose spectral measure $\mu$ obeys $g(t)^{1 - \epsilon} \precsim \dim_\mathcal{F}^-(\mu).$
\end{mythm}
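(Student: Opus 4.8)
\emph{Overview and notation.} The statement splits into an upper bound on the upper dimension, valid for every operator with positive upper Lyapunov exponent, and a matching sparse-barrier construction. The subordinacy machinery does the work in the upper bound. Throughout write $s_L = \norm{u_1}_L^{-1}\norm{u_2}_L^{-1}$, so that Theorem \ref{SUBHDIM} reads $\limsup_{\epsilon\to0}\frac{\epsilon}{\rho(\epsilon)}F(E+i\epsilon)=\infty$ if and only if $\liminf_{L\to\infty}\rho(s_L)\norm{u_1}_L^2=0$; and recall that by Theorem \ref{HAUSBOREL} together with the Rogers--Taylor comparison theorem, $\rho$-singularity (resp.\ $\rho$-continuity) of $\mu(A\cap\cdot)$ is equivalent to $\limsup_{\epsilon\to0}\frac{\epsilon}{\rho(\epsilon)}\IM F(x+i\epsilon)$ being $+\infty$ (resp.\ finite) for $\mu$-a.e.\ $x\in A$.

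\emph{The upper bound.} By the Schnol-type fact around \eqref{eq:SCHNOL}, $\mu$ is supported on the set of $E$ for which the Dirichlet solution $u_1$ obeys $\norm{u_1}_L^2\le C_{\delta'}\,L(\ln L)^{1+\delta'}$ for every $\delta'>0$ and all large $L$; in particular $|u_1(n)|$ is polynomially bounded. Fix $E\in A$ in this support. Positivity of \eqref{Lupper} gives a subsequence $n_k$ and $c>0$ with $\norm{\Phi_{n_k}(\theta,E)}\ge e^{cn_k}$. Since all matrix norms are equivalent, $\norm{\Phi_n}$ is comparable to $\max(|u_1(n)|,|u_1(n+1)|,|u_2(n)|,|u_2(n+1)|)$; as the $u_1$-entries are only polynomially large, for large $k$ the maximum is attained by a $u_2$-entry, so there is a scale $L_k\in\set{n_k,n_k+1}$ with $\norm{u_2}_{L_k}\ge e^{c'L_k}$. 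Using $\norm{u_1}_{L_k}\ge1$ this forces $s_{L_k}\le e^{-c'L_k}$, whence $\ln(1/s_{L_k})\asymp L_k$ and $\ln\ln(1/s_{L_k})\asymp\ln L_k$. Since $f_\delta$ is increasing, plugging in and choosing $\delta'<\delta$ in the Schnol bound yields
\[
f_\delta(s_{L_k})\,\norm{u_1}_{L_k}^2 \;\lesssim\; \frac{L_k(\ln L_k)^{1+\delta'}}{L_k(\ln L_k)^{1+\delta}} \;=\;(\ln L_k)^{\delta'-\delta}\longrightarrow 0 .
\]
Thus $\liminf_L f_\delta(s_L)\norm{u_1}_L^2=0$, so Theorem \ref{SUBHDIM} gives $\limsup_\epsilon\frac{\epsilon}{f_\delta(\epsilon)}F(E+i\epsilon)=\infty$, and Theorem \ref{HAUSBOREL} with Rogers--Taylor then shows $\mu(A\cap\cdot)$ is $f_\delta$-singular. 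As $f_\delta\in\mathcal{F}$, the infimum defining $\dim_\mathcal{F}^+$ is at most the index of $f_\delta$, i.e.\ $\dim_\mathcal{F}^+(\mu(A\cap\cdot))\precsim f_\delta$. The gain $(\ln L_k)^{\delta'-\delta}\to0$ is exactly what turns the Schnol $(\ln L)^{1+\delta}$ into the $(\ln\ln(1/t))^{1+\delta}$ correction in $f_\delta$.

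\emph{The construction.} For the ``moreover'' I would exhibit a sparse-barrier potential: $V$ supported on a rapidly lacunary set $\set{x_n}$ with $x_{n+1}/x_n\to\infty$ and heights $v_n$ chosen so that $B_n:=\prod_{j\le n}|v_j|$ satisfies $\limsup_n\frac{\ln B_n}{x_n}>0$. Evaluating the transfer matrix just past each barrier gives $\norm{\Phi_{x_n}}\asymp B_n$, so the upper Lyapunov exponent is positive, while $\liminf_L\frac1L\ln\norm{\Phi_L}=0$ along the long free stretches between barriers, so the operator is not in the positive-Lyapunov regime. This is the model of Theorem \ref{SPARSEBARTHM}. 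To read off the lower dimension I would pass to Pr\"ufer/EFGP variables: between consecutive barriers the solutions evolve freely with constant amplitude, so $\norm{u_1}_L^2$ and $\norm{u_2}_L^2$ change only across barriers and grow like $L$ with \emph{no} $\ln L$ correction on each block, improving \eqref{eq:SCHNOL} there. For Lebesgue-a.e.\ $E$ the Pr\"ufer angles at the barriers equidistribute, which lets me compute $\norm{u_1}_L$ and $\norm{u_2}_L$ at every scale and identify the subordinate solution. Feeding these two-sided bounds into Theorem \ref{SUBHDIM} gives $\liminf_L g^\beta(s_L)\norm{u_1}_L^2>0$ for every $\beta<1-\epsilon$, i.e.\ $\mu$ is $g^\beta$-continuous, so with Theorem \ref{HAUSBOREL} and Rogers--Taylor we obtain $g^{1-\epsilon}\precsim\dim_\mathcal{F}^-(\mu)$; a spectral-averaging argument over the boundary phase passes from the Lebesgue-a.e.-$E$ solution estimates to a statement about $\mu=\mu_\theta$ for a.e.\ $\theta$.

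\emph{The main obstacle.} The first half is essentially routine; the only delicacies are matching the $\IM F$ of Theorem \ref{HAUSBOREL} with the $F$ of Theorem \ref{SUBHDIM}, and checking that the $n_k$ versus $n_k+1$ ambiguity does not disturb the asymptotics, both harmless. The real difficulty is in the construction. Because the lower dimension is a $\liminf$ over $L$, it is controlled by the least favorable scales \emph{within each block}, not merely at the barrier scales; establishing the improved, correction-free growth $\norm{u}_L^2\asymp L$ there and simultaneously pinning down the subordinate solution via equidistribution of Pr\"ufer angles (valid only for Lebesgue-a.e.\ $E$, which is why spectral averaging and the a.e.-$\theta$ restriction enter) is the crux, and is precisely the content of part 3 of Theorem \ref{SPARSEBARTHM}.
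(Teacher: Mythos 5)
Your proof of the upper bound is correct and is essentially the paper's argument: the paper packages the identical computation as Corollary \ref{TRANSFERMATRIX} (a specialization of Theorem \ref{GENSING}) applied with $f(L)=e^{LL^*(E)/2}$, i.e.\ the Schnol bound \eqref{eq:SCHNOL} on $\norm{u_1}_L$ plus the exponential lower bound on $\norm{u_2}_L$ forced by $\norm{\Phi_{n_k}}\ge e^{cn_k}$, fed into Theorem \ref{SUBHDIM}; your direct unpacking, including the $(\ln L_k)^{\delta'-\delta}\to0$ bookkeeping, matches this step for step.

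The divergence is in the ``moreover'' part. Like the paper, you defer to the sparse-barrier model of Theorem \ref{SPARSEBARTHM} (with $\beta(x)=e^x$, $\eta=1$, so $G(t)\sim g(t)$), but you route the lower bound $g^{1-\epsilon}\precsim\dim_\mathcal{F}^-(\mu)$ through Pr\"ufer-angle equidistribution for Lebesgue-a.e.\ $E$ followed by spectral averaging, losing a null set of phases. The paper proves this bound \emph{deterministically, for every $E\in(-2,2)$ and every boundary phase}: the two-sided estimate \eqref{EXPBOUND} on $\norm{\Phi_m(E)}$ (products of bounded free blocks and single barrier matrices), together with $|u_1(m+1)|^2+|u_1(m)|^2=\norm{\Phi_m e_1}^2\ge\norm{\Phi_m}^{-2}$, yields the block-by-block bounds \eqref{U1Lower}--\eqref{U2Upper} on $\norm{u_1}_m^2$ and $\norm{u_2}_m^2$ at every intermediate scale $m$, which go straight into Theorem \ref{SUBHDIM} -- no equidistribution and no averaging. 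The a.e.-$E$/a.e.-$\theta$ machinery you invoke (which is indeed forced once you work with Lebesgue-a.e.\ estimates, since $\mu$ is singular and Lebesgue-a.e.\ control says nothing about $\mu$-a.e.\ points without spectral averaging) is what the paper reserves for part (iii) of Theorem \ref{SPARSEBARTHM}, namely the \emph{improved upper bound} $\dim^+_\mathcal{F}\precsim G^{1/\eta}$ that removes the $\log\log$ correction -- a statement Theorem \ref{UPPERLYAP} does not need. Your plan would still suffice for the existence claim (one good $\theta$ is enough), but it attacks the easier half of the construction with the heavier tool and leaves the equidistribution computation, which the deterministic route renders unnecessary, entirely unproven; as written, the ``moreover'' part of your proposal is therefore an outline rather than a proof, with its one genuinely hard step located in the wrong place.
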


This improves upon an earlier result from \cite{LanaLast} which was only able to conclude that the power-law dimension was 0, and earlier results from Landrigan, which were only able to conclude that the $\log$-dimension was at most 1. There are two immediate consequences of this result: (i) the lower dimension bound shows that Landrigan's $\log$-dimension result is sharp, (ii) our upper dimension result reveals that there may be examples of operators with positive upper Lyapunov exponent that have a dimension strictly larger than $1/\ln(1/t).$ 

While we do not know which of these bounds is sharp, by considering an operator with a suitably sparse barrier potential, we are able to show that the lower dimension above can coincide with the actual dimension for Lebesgue a.e. boundary phase. Let $\beta(x)$ be a non-negative increasing convex function such that $\log(\beta(x))$ is still convex. For example, we could take $\beta(x) = e^x.$ Moreover, suppose that $G(t) = 1/\beta^{-1}(1/t^2)$ defines a zero dimensional Hausdorff dimension function. We will consider any family of Hausdorff dimension functions, $\mathcal{F},$ such that $G(t)^{1/\eta},$ $G(t)^{(1 - \epsilon)/\eta}$ and $G(t)^{1/\eta}/(\ln(\beta^{-1}(1/t)))^{1 + \delta} \in \mathcal{F}$ for some $\eta > 0,$ and some $\epsilon > 0.$ Define length scales inductively by $L_1 = 2, L_{n+1} = \beta(L_n)^n$ and define a potential
	\begin{equation}
	V(n) = \begin{cases} \beta(L_k)^\eta & n = L_k \\
	0 & n\not\in \set{L_k}_{k = 1}^\infty
	\end{cases}.
	\end{equation}
	
\begin{mythm}\label{SPARSEBARTHM}
Let $\eta, \beta, G(t), \mathcal{F}$ and $V(n)$ be as above. Let $\mu_\theta$ be the spectral measure of the half-line operator $(H_\theta u)(n) = u(n+1) + u(n-1) + V(n)u(n)$ with boundary phase $\theta.$ Then 
\begin{enumerate}
\item[(i)] for every boundary phase $\theta,$ the spectrum of $H_\theta$ consists of the interval $[-2,2]$ (which is the essential spectrum) along with some discrete point spectrum outside this interval;
\item[(ii)] for every $\theta,$  
\begin{align*}
G(t)^{(1 - \epsilon)/\eta} &\precsim \dim_\mathcal{F}^-(\mu_\theta((-2,2)\cap\cdot))\\
&\precsim \dim_\mathcal{F}^+(\mu_\theta((-2,2)\cap\cdot)) \\
&\precsim G(t)^{1/\eta}/(\ln(\beta^{-1}(1/t)))^{1 + \delta};
\end{align*}
\item[(iii)] for Lebesgue a.e. $\theta,$ $\dim^+_\mathcal{F}(\mu) \precsim G(t)^{1/\eta}.$
\end{enumerate}
\end{mythm}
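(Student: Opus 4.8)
The plan is to deduce every dimensional statement from the subordinacy criterion of Theorem \ref{SUBHDIM}, combined with Theorem \ref{HAUSBOREL} and the Rogers--Taylor density characterization: for a gauge $\rho$, the measure $\mu_\theta$ is $\rho$-singular on $(-2,2)$ iff for $\mu_\theta$-a.e.\ $E$ one has $\liminf_{L}\rho\big(\norm{u_1}_L^{-1}\norm{u_2}_L^{-1}\big)\norm{u_1}_L^2=0$, and $\rho$-continuous iff this $\liminf$ is positive $\mu_\theta$-a.e. All three parts then rest on asymptotics of the Dirichlet solution $u_1$ and the orthogonal solution $u_2$. Because $V$ vanishes off $\set{L_k}$, on each stretch $(L_k,L_{k+1})$ the transfer matrices are free (elliptic for $E\in(-2,2)$, so amplitude is preserved up to bounded factors and the phase rotates), while a single barrier at $L_k$ has transfer matrix of norm $\asymp\beta(L_k)^\eta$ and, in Pr\"ufer coordinates, multiplies the amplitude by $\asymp\beta(L_k)^\eta\lvert\sin\phi_k\rvert$. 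I would record this as: the dominant solution $u_2$ satisfies $\norm{u_2}_{L_k}^2\asymp P_k^2$ just past the $k$-th barrier, where $P_k=\prod_{j\le k}\beta(L_j)^\eta$ (a single post-barrier site dominates the whole $\ell^2$ sum, since the barriers are so strong and sparse), while on the spectrum the Dirichlet solution $u_1$ is subordinate. Part (i) is separate and standard: for $E\in(-2,2)$ the long free gaps carry normalized approximate eigenfunctions, giving $[-2,2]\subseteq\sigma_{\mathrm{ess}}(H_\theta)$; for $\lvert E\rvert>2$ the free transfer matrices are hyperbolic, so no polynomially bounded solution exists except at the isolated bound states produced by the barriers, whence the spectrum off $[-2,2]$ is discrete.

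For part (ii) the upper bound comes from feeding the Schnol-type bound \eqref{eq:SCHNOL}, $\norm{u_1}_L^2\lesssim L(\ln L)^{1+\delta}$ (valid $\mu_\theta$-a.e.\ $E$, every $\theta$), together with the dominant growth of $u_2$, into the $\liminf$ evaluated along the barrier scales $L\approx L_k$. The crux is a scale-matching identity: $\rho=G^{1/\eta}$ is exactly critical when $\norm{u_1}_L^2\,\norm{u_2}_L^2\asymp\beta\big(\norm{u_1}_L^{2\eta}\big)$, which is what $G(t)=1/\beta^{-1}(1/t^2)$ encodes after substituting $\epsilon\asymp(\norm{u_1}_L\norm{u_2}_L)^{-1}$ into $\mu_\theta(B(E,\epsilon))\asymp\norm{u_1}_{L(\epsilon)}^{-2}$. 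Carrying the $(\ln L)^{1+\delta}$ factor through this computation degrades $G^{1/\eta}$ by precisely $(\ln(\beta^{-1}(1/t)))^{1+\delta}$, the stated correction. The matching lower bound uses that the barrier heights \emph{cap} the amplification: for every $E$ (hence every $\theta$) one has $\norm{u_2}_{L}\lesssim P_k\sqrt{L}$ and a corresponding lower bound on $\norm{u_1}_L$, so the same computation with a factor of slack keeps the $\liminf$ positive for $\rho=G^{(1-\epsilon)/\eta}$, i.e.\ $\rho$-continuity; the $\epsilon$ absorbs the constants and the lower-order products $\prod_{j<k}\beta(L_j)^\eta$.

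For part (iii) I would improve \eqref{eq:SCHNOL} from all scales to the log-free bound $\norm{u_1}_{L_k}^2\lesssim L_k$ along the sparse subsequence, for Lebesgue a.e.\ $\theta$. The device is spectral averaging in the boundary phase: since $H_\theta$ is a rank-one perturbation at the origin, $\int \big(\int \norm{u_1}_{L_k}^2\,d\mu_\theta(E)\big)\,d\theta\lesssim L_k$, which sets up a Borel--Cantelli argument over $k$. The general Schnol bound must sum over all length scales, forcing the convergent weight $1/(L(\ln L)^{1+\delta})$ and hence the logarithm; here the scales grow super-exponentially, $L_{k+1}=\beta(L_k)^k$, so $\sum_k 1/L_k<\infty$ already and the Borel--Cantelli series converges \emph{without} any logarithmic factor. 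Thus for a.e.\ $\theta$ and $\mu_\theta$-a.e.\ $E$ one gets $\norm{u_1}_{L_k}^2\lesssim L_k$ for all large $k$; inserting this into the $\liminf$ along $L=L_k$ and using the critical identity yields $\liminf_L G^{1/\eta}\big(\norm{u_1}_L^{-1}\norm{u_2}_L^{-1}\big)\norm{u_1}_L^2=0$, i.e.\ $\mu_\theta$ is $G^{1/\eta}$-singular and $\dim_{\mathcal F}^+(\mu_\theta)\precsim G^{1/\eta}$. (The discrete point spectrum off $[-2,2]$ is $\rho$-singular for every $\rho$ and does not affect $\dim^+$.)

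The main obstacle is precisely this a.e.-$\theta$ log-removal in part (iii): making the spectral-averaging-plus-Borel--Cantelli scheme rigorous requires (a) a uniform-in-$k$ control of the Pr\"ufer phases $\phi_k$ at the barriers so that the generic amplification $\asymp\beta(L_k)^\eta$ is realized and the post-barrier $\ell^2$ mass is correctly localized, and (b) verifying that the averaged quantity is genuinely summable over the sparse scales after discarding a $\theta$-null and $E$-null set. The secondary technical point is the scale-matching identity $\norm{u_1}^2\norm{u_2}^2\asymp\beta(\norm{u_1}^{2\eta})$: one must check that the dominant factor in $P_k^2$ and in $L_{k+1}=\beta(L_k)^k$ combine, through $\beta^{-1}$, to reproduce $G^{1/\eta}$ exactly and not merely up to an uncontrolled power, which is where the convexity hypotheses on $\beta$ and $\log\beta$ enter.
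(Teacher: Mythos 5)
Your parts (i) and (ii) track the paper's argument: the paper also proves (ii) by bounding the transfer matrix product as a bounded free factor times $\prod_{k\le n}V(L_k)$, feeding the Schnol bound $\norm{u_1}_L^2\lesssim L(\ln L)^{1+\delta}$ and the growth of $u_2$ into Corollary \ref{TRANSFERMATRIX} for the singularity direction, and into Theorem \ref{SUBHDIM} for the continuity direction. The one place you are too quick in (ii) is the continuity bound: positivity of the $\liminf$ must be verified at \emph{every} scale $L$, including the long stretches $L_k<L<L_{k+1}$ where the balance between $\norm{u_1}_L^2$ and $\norm{u_2}_L^2$ changes; the paper handles this with an explicit two-case analysis in the intermediate length $l=L-L_k$ together with the subadditivity $\beta^{-1}(xy)\le\beta^{-1}(x)+\beta^{-1}(y)$ (Lemma \ref{CONVEXITY}). ``The same computation with a factor of slack'' does not substitute for this.

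Part (iii) has a genuine gap, and the mechanism you propose cannot close it. First, Chebyshev applied to $\int\norm{u_1}_{L_k}^2\,d\mu_\theta(E)=L_k$ plus Borel--Cantelli over $k$ forces a divergent weight: you only get $\norm{u_1}_{L_k}^2\le a_kL_k$ for a sequence $a_k\to\infty$ with $\sum 1/a_k<\infty$, never the log-free constant you need. Second, and more fundamentally, this bound lives at the wrong scale. At $L=L_k$ (just before the barrier) one has $\norm{u_2}_{L_k}^2\asymp L_k\prod_{j<k}\beta(L_j)^{2\eta}$, so $\beta^{-1}(\norm{u_1}_{L_k}^2\norm{u_2}_{L_k}^2)\lesssim kL_{k-1}\lll L_k$ and the quantity $G^{1/\eta}(\cdots)\norm{u_1}_{L_k}^2$ blows up rather than staying bounded; the ``critical identity'' only kicks in at scales $m>L_k$ where $u_2$ has absorbed the factor $\beta(L_k)^{2\eta}$. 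But to cross the barrier one applies $u_1(L_k+1)=(E-V(L_k))u_1(L_k)-u_1(L_k-1)$, so generically $|u_1(L_k+1)|\asymp\beta(L_k)^\eta|u_1(L_k)|$ and $\norm{u_1}_m^2$ for $m>L_k$ is not controlled by $\norm{u_1}_{L_k}^2$ at all. What is actually required is the pointwise statement that $u_1$ \emph{decays} across each barrier by a factor $\asymp 1/V(L_k)$, i.e.\ that $u_1$ is the subordinate solution whose Pr\"ufer phase is aligned to cancel the amplification. The paper obtains this by an entirely different route: it decouples the operator at the barriers via resolvent identities to write $G(1,n,z)=G(1,L_k-1,z)G_k'(L_k+1,n,z)(V(L_k)-z)^{-1}$, uses Boole's inequality for Borel transforms of singular measures to estimate the Lebesgue measure of the set of $E$ where these Green's functions are large, and runs Borel--Cantelli \emph{in the energy variable} to produce, for Lebesgue-a.e.\ $E$, a solution decaying like $f^j(L_k)^\gamma/V(L_k)$ past each barrier. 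Spectral averaging in $\theta$ enters only at the very end, in the form ``Lebesgue-null sets of energies are $\mu_\theta$-null for a.e.\ $\theta$,'' which forces $u_1$ to coincide with that decaying solution for a.e.\ $\theta$ and $\mu_\theta$-a.e.\ $E$; it is not used as an $L^1$ bound on $\norm{u_1}_{L_k}^2$. Without some version of this pointwise decay across the barriers, your evaluation of the $\liminf$ in part (iii) does not go through.
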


In particular, if we take $\beta(x) = e^x, \eta = 1,$ then Theorem \ref{SPARSEBARTHM} proves the second part of Theorem \ref{UPPERLYAP}.

One of the most interesting parts of this theorem is that we only prove an exact dimension result for a.e. boundary phase $\theta.$ This is a limitation of our proof, where we carefully study the existence of suitably decaying solutions in the case $\theta = 0,$ and interpret different boundary phases as consequences of particular rank-one perturbations; by considering rank-one perturbations, we are able to deduce the existence of similarly decaying solutions for other boundary phases, but lose a Lebesgue null set in the process. A similar result is known to hold for every boundary phase when positive power-law Hausdorff dimensions are considered, but the only proof we are aware of requires more involved arguments involving quantum dynamics \cite{TCHPotentials}.

\subsection{Systems with exponentially localized eigenfunctions}
We then turn our attention to fractal properties of Schr\"odinger operators on the lattice $l^2(\Z^\nu), \nu \geq 1.$

First, we study what happens to the dimensional properties of spectral measures when we apply rank-one perturbations to operators with exponentially localized eigenfunctions. More specifically, by the spectral theorem it is known that every bounded self-adjoint operator on a Hilbert can be realized as $A: L^2(d\mu) \to L^2(d\mu),$ $\psi \mapsto \psi \cdot x,$ for some suitable measure $\mu.$  If we let $\varphi \in L^2(d\mu)$ be a cyclic unit vector, then we can easily define the rank-one perturbation of $A$ by $\varphi$ as
\begin{equation}
A_\lambda = A + \lambda \INNERPROD{\varphi}{\cdot}\varphi, \quad \lambda \in \R.
\end{equation}
If we let $\mu_\lambda$ denote the spectral measure of $A_\lambda$ associated to $\varphi,$ and $F_\lambda$ the Borel transform of $\mu_\lambda,$ then it is known that
\begin{equation}
F_\lambda(z) = \frac{F_0(z)}{1+ \lambda F_0(z)}
\end{equation}
which, in conjunction with our work relating dimensional properties of a measure to growth properties of Borel transforms, allows us to study how the dimension of a spectral measure is affected when it is under the effect of a rank-one perturbation. 

We say that a self-adjoint operator on $l^2(\Z^\nu)$ has semi-uniformly localized eigenfunctions (SULE) if and only if  there is a complete set of orthonormal eigenfunctions, $\set{\varphi_n}_{n = 1}^\infty,$ there is $\alpha > 0$ and $m_n \in \Z^\nu, n \geq 1$ and for each $\delta > 0,$ a $C_\delta > 0$ so that 
\begin{equation}
|\varphi_n(m)| \leq C_\delta e^{\delta |m_n| - \alpha|m - m_n|}
\end{equation}
for all $m \in \Z^\nu,$ and $n \geq 1.$ 

It is known, \cite{DelRio1}, that if an operator $H: l^2(\Z^\nu) \to l^2(\Z^\nu)$ has SULE, if $H_\lambda = H + \lambda\INNERPROD{\delta_0}{\cdot}\delta_0,$ and if $\mu$ and $\mu_\lambda$ are the spectral measure for $H$ and $H_\lambda$ respectively associated to $\delta_0,$ then $\mu_\lambda$ is zero-dimensional. We are able to improve this into
\begin{mythm} \label{MySULERes}
Suppose $H$ has SULE and let $\mathcal{F} = \set{\ln(1/t)^{-\alpha}: 0 < \alpha < \infty}.$ Let $H_\lambda = H + \lambda\INNERPROD{\delta_0}{\cdot}\delta_0.$ Let $d\mu$ be the spectral measure of $H$ associated to $\delta_0,$ and let $d\mu_\lambda$ be the corresponding spectral measures for $H_\lambda.$ Then for every $\lambda,$ $\dim_\mathcal{F}(\supp(d\mu_\lambda)) \precsim \ln(1/t)^{-\nu}.$ 
\end{mythm}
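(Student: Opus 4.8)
The plan is to read $\supp(d\mu_\lambda)$ as the assertion that $\mu_\lambda$ admits a Borel carrier of $\mathcal{F}$-dimension at most $\log(1/t)^{-\nu}$; by the Rogers--Taylor decomposition and the definition of $\dim_\mathcal{F}^+$, this is equivalent to showing that $\mu_\lambda$ is $\rho_\alpha$-singular for every $\alpha > \nu$, where $\rho_\alpha(t) = \log(1/t)^{-\alpha}$. I would first record the two quantitative consequences of SULE that drive everything: setting $m = 0$ in the SULE bound gives the weight estimate $w_n := |\varphi_n(0)|^2 = |\INNERPROD{\delta_0}{\varphi_n}|^2 \le C_\delta^2 e^{-2(\alpha_0 - \delta)|m_n|}$ for any $\delta < \alpha_0$ (with $\alpha_0$ the SULE rate), and orthonormality together with the localization forces the volume count $\#\set{n : |m_n| \le R} \le C R^\nu$. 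The unperturbed measure $\mu_0 = \sum_n w_n \delta_{E_n}$ is pure point, hence trivially $\rho$-singular for every gauge $\rho$; the entire content of the theorem therefore lies in how the rank-one perturbation smears this dense point spectrum into a singular-continuous measure, and in extracting the exponent $\nu$ from the competition between the weight decay and the volume count.

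The reduction to the unperturbed data is made through the identity $F_\lambda = F_0/(1 + \lambda F_0)$ together with Theorem \ref{HAUSBOREL}: since $\IM F_\lambda(x + i\epsilon) = \IM F_0(x+i\epsilon)/|1 + \lambda F_0(x+i\epsilon)|^2$, controlling the local $\rho_\alpha$-scaling of $\mu_\lambda$ at a point $x$ reduces to an estimate on $\IM F_0$ and $\mathrm{Re}\,F_0$ there. By Theorem \ref{HAUSBOREL} it suffices to prove that, for $\mu_\lambda$-a.e.\ $x$ and every $\alpha > \nu$,
\[ \limsup_{\epsilon \to 0^+} \frac{\epsilon}{\rho_\alpha(\epsilon)}\,\IM F_\lambda(x + i\epsilon) = \infty, \]
since, via the Rogers--Taylor theory, an a.e.-infinite local density with respect to $\rho_\alpha$ forces $\rho_\alpha$-singularity. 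The heart of the matter is the following scale-matching: writing $\IM F_0(x+i\epsilon) = \sum_n \frac{\epsilon w_n}{(E_n - x)^2 + \epsilon^2}$, the eigenvalues that contribute at imaginary part $\epsilon$ are those with weight $w_n \gtrsim \epsilon$, i.e.\ with center radius $|m_n| \lesssim \tfrac{1}{c}\log(1/\epsilon)$ for $c = 2(\alpha_0-\delta)$; by the volume count there are at most $C(\log(1/\epsilon))^\nu$ of them. Balancing this count against the gauge $\rho_\alpha(\epsilon) = (\log(1/\epsilon))^{-\alpha}$ produces the critical exponent $\nu$: the number of relevant eigenvalues grows like $(\log(1/\epsilon))^\nu$, which outpaces $(\log(1/\epsilon))^{\alpha}$ precisely when $\alpha > \nu$.

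The main obstacle is making this scale-matching rigorous at the level of the perturbed measure, i.e.\ controlling the resonance factor $|1 + \lambda F_0(x+i\epsilon)|^{-2}$ uniformly in $\lambda$. At a $\mu_\lambda$-typical energy one has $\mathrm{Re}(1 + \lambda F_0(x+i0)) = 0$, so this factor is large and must be shown not to degrade the lower bound coming from the $(\log(1/\epsilon))^\nu$ heavy eigenvalues; this is exactly the Aronszajn--Donoghue bookkeeping, and it is where the argument is most delicate. I expect to handle it by localizing to the resonance set where $1 + \lambda F_0$ vanishes, estimating $\mathrm{Re}\,F_0$ and its derivative there in terms of the nearby weights and spacings, and feeding the resulting two-sided control of $\IM F_\lambda$ into Theorem \ref{HAUSBOREL}. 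The cases $\lambda = 0$ (pure point, trivial) and $\lambda \ne 0$ should be separated, and the estimate must be shown to hold for \emph{every} $\lambda$ rather than almost every $\lambda$, which is where the exponential weight decay---rather than a generic spacing hypothesis---is essential. Once the a.e.-infinite local density is established for all $\alpha > \nu$, the definition of $\dim_\mathcal{F}^+$ gives $\dim_\mathcal{F}^+(\mu_\lambda) \precsim \log(1/t)^{-\nu}$, and the corresponding carrier set furnishes the claimed bound on $\dim_\mathcal{F}(\supp(d\mu_\lambda))$.
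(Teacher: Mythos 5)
Your overall strategy --- reduce to the unperturbed pure point data via the rank-one formulas, and extract the exponent $\nu$ from the interplay between the SULE weight decay and a $(\log(1/\epsilon))^\nu$ eigenvalue count --- identifies the right ingredients, but the route you choose through Theorem \ref{HAUSBOREL} leaves the hardest step unproved, and that step is avoidable. The paper does not establish any pointwise lower bound on $\IM F_\lambda(x+i\epsilon)$ at $\mu_\lambda$-typical energies. Instead it proves a general result (Theorem \ref{MyDelRio}): writing $d\mu = \sum_n a_n\, d\delta_{E_n}$ with $a_n = \varphi_n(0)$ and invoking Lemma \ref{DelRio3} to relabel so that $|a_n| \le C e^{-Dn^{1/\nu}}$, the Aronszajn--Donoghue theory says $d\mu_\lambda^{sc}$ is carried by $S = \set{x : G(x) = \infty,\ x \notin \set{E_n}}$ where $G(x) = \int (x-y)^{-2}\,d\mu(y)$; one then shows $S \subset \limsup_n A_n^\epsilon$ for the intervals $A_n^\epsilon$ of length $|a_n|^{1/2-\epsilon}$ centered at $E_n$ (if $x$ avoids all but finitely many $A_n^\epsilon$ then $G(x) \le C + \sum 2|a_n|^{2\epsilon} < \infty$), and Lemma \ref{IntLem} bounds $\dim_\mathcal{F}(\limsup A_n^\epsilon)$ by summing $g(|A_n^\epsilon|) \sim n^{-\alpha/\nu}$, which converges exactly when $\alpha > \nu$. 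Since the conclusion concerns $\dim_\mathcal{F}(\supp(d\mu_\lambda))$ --- the dimension of a carrier set --- this covering argument is precisely what is needed, and it holds for every $\lambda$ with no analysis of the resonance factor at all.

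The genuine gap in your proposal is twofold. First, you defer the control of $|1+\lambda F_0(x+i\epsilon)|^{-2}$ at resonance energies to an ``Aronszajn--Donoghue bookkeeping'' that you only ``expect to handle''; that control, uniformly in $\lambda$ and at $\mu_\lambda$-a.e.\ $x$, is the entire difficulty of the Borel-transform route and is not supplied. Second, your scale-matching heuristic does not yield the pointwise statement you need: the $\sim (\log(1/\epsilon))^\nu$ eigenvalues with weight $w_n \gtrsim \epsilon$ are distributed over the whole spectrum, not clustered within distance $\epsilon$ of a fixed $x$, so their count does not lower-bound $\IM F_0(x+i\epsilon)$ at a given energy. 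The exponent $\nu$ in fact enters through summability of the gauge evaluated on interval lengths, $\ln(1/|A_n|)^{-\alpha} \sim n^{-\alpha/\nu}$, i.e.\ through a Hausdorff-content computation on the $\set{G=\infty}$ set rather than through a local density estimate. I would recommend restructuring the argument around the support characterization and the covering lemma; your observation that the volume count $\#\set{n : |m_n| \le R} \le CR^\nu$ encodes the same information as the relabeling $|\varphi_n(0)| \le Ce^{-Dn^{1/\nu}}$ is correct and can be substituted for Lemma \ref{DelRio3}, but the rest of the machinery should be the covering one.
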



\subsection{Quantum dynamics}
We now turn our attention to dynamical properties of Schr\"odinger operators on the lattice $l^2(\Z^\nu), \nu \geq 1.$ Our main interest in this setting is in dynamical properties of operators of the form
\begin{equation}
(H \psi)(n) = \sum_{|n - m| = 1} \psi(m) + V(n) \psi(n),
\end{equation} 
though much of our discussion applies to any self-adjoint Hamiltonian. A theory based on the power-law dimension was developed by Last \cite{Last}. Notably, the theory establishes an extremely useful connection between the continuity of a spectral measure and the average growth of the moments of the corresponding position operator (see, e.g. \cite{DamTch1}, \cite{KKLDynamics}, \cite{Tch2}, \cite{TCHPotentials}). Our starting point is that the original theory of Rogers and Taylor was actually developed in the generality that we are using; in particular, the decomposition theory and the critical Theorem 4.2 in \cite{Last} exists in our general setting once a suitable notion of uniform H\"older continuity with respect to a general Hausdorff dimension function is realized. This allows us to proceed in much the same manner as Last. 

A common application of Last's theory is the notion of a transport exponent, which relates to the average power-law growth of the $p^{th}$ moment of the position operator. One of our most important results in this direction is
\begin{mythm} \label{POSBDTHM} If $H$ is self-adjoint on $l^2(\Z^\nu)$ and $P_{\rho c}\psi \ne 0,$ where $P_{\rho c}$ is the orthogonal projection on $\mathscr{H}_{\rho c},$ then for each $m > 0,$ there exists a constant $C = C(\psi,m)$ such that for every $T > 1$
\begin{equation}
\left\langle\left\langle |X|^m \right\rangle\right\rangle_T > C \rho(1/T)^{-m/\nu}.
\end{equation}
\end{mythm}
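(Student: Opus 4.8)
The plan is to mirror Last's argument for power-law transport bounds, with the single essential new ingredient being a notion of \emph{uniform $\rho$-H\"older continuity}: call a measure $\sigma$ uniformly $\rho$-H\"older continuous if there is a constant $C_\sigma$ with $\sigma((E-\epsilon,E+\epsilon)) \le C_\sigma\,\rho(\epsilon)$ for all $E\in\R$ and all small $\epsilon>0$. The first step is to record the generalized Rogers--Taylor decomposition in this language: the $\rho$-continuous part $\mu_{\rho c}$ of the spectral measure $\mu_\psi$ is an increasing limit of uniformly $\rho$-H\"older continuous measures, namely the restrictions of $\mu_\psi$ to the sets on which the ratio $\mu_\psi((E-\epsilon,E+\epsilon))/\rho(\epsilon)$ stays bounded. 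Since $P_{\rho c}\psi\ne 0$ we have $\mu_{\rho c}\ne 0$, so I may fix a Borel set $S$ so that $\sigma := \mu_\psi|_S$ is nonzero and uniformly $\rho$-H\"older continuous; writing $\phi = \chi_S(H)\psi$, this $\phi$ lies in the reducing subspace $\chi_S(H)\mathscr{H}$ and has spectral measure exactly $\sigma$.

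The second step is the generalization of Last's critical Theorem 4.2, i.e.\ the Combes--Guarneri--Last inequality. Writing $P^\phi(n,T) = \frac{2}{T}\int_0^\infty e^{-2t/T}|\langle\delta_n, e^{-itH}\phi\rangle|^2\,dt$ for the Abel-averaged occupation probabilities, the Plancherel computation on $(H-E-i/T)^{-1}\phi = i\int_0^\infty e^{iEt-t/T}e^{-itH}\phi\,dt$ gives the identity
\[ P^\phi(n,T) = \frac{1}{\pi T}\int_\R \left|\langle \delta_n, (H-E-i/T)^{-1}\phi\rangle\right|^2\,dE, \]
which leads, exactly as in the power-law case and with no appeal to power-law scaling, to a bound of the form
\[ \sum_n P^\phi(n,T)^2 \;\le\; C\,\sup_{E\in\R}\sigma\big((E-1/T,\,E+1/T)\big) \;\le\; C'\,\rho(1/T). \]
This is the only place the measure enters, and uniform $\rho$-H\"older continuity of $\sigma$ turns the right-hand side into $C'\rho(1/T)$ upon setting $\epsilon = 1/T$.

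The third step is the escape-of-mass argument. By Cauchy--Schwarz over the box $\{|n|\le L\}$ together with conservation of total probability $\sum_n P^\phi(n,T) = \norm{\phi}^2 =: c_0 > 0$, the previous bound yields
\[ \sum_{|n|\le L} P^\phi(n,T) \;\le\; (2L+1)^{\nu/2}\Big(\sum_n P^\phi(n,T)^2\Big)^{1/2} \;\le\; (2L+1)^{\nu/2}\big(C'\rho(1/T)\big)^{1/2}. \]
Choosing $L = c_1\,\rho(1/T)^{-1/\nu}$ with $c_1$ small makes the right-hand side at most $c_0/2$, so at least half of the mass escapes the box, and therefore $\langle\langle|X|^m\rangle\rangle_T^\phi \ge L^m\cdot \tfrac{c_0}{2} \ge C\,\rho(1/T)^{-m/\nu}$. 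This holds for every $T$ large enough that $\rho(1/T)$ is small, and the remaining compact range of $T>1$ is absorbed into the constant $C(\psi,m)$.

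The final step, and the one I expect to be the main obstacle, is transferring this lower bound from $\phi$ to $\psi$ itself. Because the occupation probabilities are quadratic in the state, $\langle\langle|X|^m\rangle\rangle_T^\psi$ differs from $\langle\langle|X|^m\rangle\rangle_T^\phi$ by the contribution of $\eta=\psi-\phi$ (which is nonnegative and can only help) together with an interference term, and there is no pointwise domination $P^\psi(n,T)\ge P^\phi(n,T)$. The resolution is that $\phi$ and $\eta$ have spectral measures supported on the disjoint Borel sets $S$ and $S^{c}$, hence mutually singular; a RAGE-type computation then shows that the time-averaged interference term is lower order than $\rho(1/T)^{-m/\nu}$ as $T\to\infty$. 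Carrying out this estimate for the unbounded weight $|X|^m$ (via truncation), uniformly enough to preserve a clean for-every-$T$ statement, is the delicate point; once it is in place, $\langle\langle|X|^m\rangle\rangle_T^\psi \ge C\rho(1/T)^{-m/\nu}$ follows with a possibly smaller constant.
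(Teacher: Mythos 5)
Your first three steps track the paper's argument closely: extract a nonzero uniformly $\rho$-H\"older continuous component of $\mu_\psi$ via the Rogers--Taylor decomposition, prove the generalized Guarneri/Combes/Last bound $\sum_n P^\phi(n,T)^2 \lesssim \rho(1/T)$ (the paper packages this as Lemma \ref{SOMETHING} and Theorem \ref{DYNCOMPRES}, applied to the compact projection $P_N$ with $\mathrm{Tr}(P_N)\sim N^\nu$), and run the escape-of-mass argument at scale $N\sim\rho(1/T)^{-1/\nu}$. All of that is sound and is essentially what the paper does.

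The genuine gap is your final step. You correctly identify that there is no pointwise domination $P^\psi(n,T)\ge P^\phi(n,T)$, but the proposed remedy --- a RAGE/Wiener-type argument exploiting mutual singularity of the spectral measures of $\phi$ and $\eta=\psi-\phi$ to show the interference term is lower order --- is not carried out and would be hard to make work: Wiener-type decay of $\langle\phi(t),A\eta(t)\rangle$ in time average holds for compact $A$, but $|X|^m$ is unbounded, the truncation tails are exactly where the moments live, and even in the compact part you would need a \emph{quantitative} rate $o(\rho(1/T)^{-m/\nu})$, which mutual singularity alone does not supply. The paper sidesteps the interference problem entirely by never comparing moments of $\psi$ and $\phi$. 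Instead it bounds the probability of \emph{remaining} in the box: writing $\psi=\psi_1+\psi_2$ with $\mu_{\psi_1}$ U$\rho$H and $\|\psi_2\|^2\le 1-\tfrac12\|\psi_{\rho c}\|^2<1$, the triangle inequality gives $\|P_N\psi(t)\|\le\|P_N\psi_1(t)\|+\|\psi_2\|$, so choosing $N_T\sim\rho(1/T)^{-1/\nu}$ to make $\langle\|P_{N_T}\psi_1(t)\|^2\rangle_T$ small forces $\langle\|P_{N_T}\psi(t)\|^2\rangle_T\le 1-\tfrac12\|\psi_1\|^2$, hence $\langle\|(1-P_{N_T})\psi(t)\|^2\rangle_T\ge\tfrac12\|\psi_1\|^2$ and $\langle\langle|X|^m\rangle\rangle_T\ge N_T^m\cdot\tfrac12\|\psi_1\|^2$. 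Conservation of total probability absorbs the cross terms, no singularity of supports is needed, and the bound holds for every $T$ with an explicit constant. You should replace your step four with this argument; as written, your proof is incomplete at precisely the point you flagged as delicate.
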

This may be used to define a more general notion of transport exponent than has previously been studied. Analysis of this transport exponent has been of central importance in many dynamical results (see e.g. \cite{DamTch1}, \cite{KKLDynamics}, \cite{Tch2}, \cite{TCHPotentials}) and we hope to extend this analysis in future work.

\section{General Hausdorff dimension of sets and measures}\label{section:BorelSubTheory}

The following characterization dates back to the original work of Rogers and Taylor \cite{RT1, RT2}
\begin{mythm}\label{RTDef}
Let $A$ be a Borel set, and let $\mu$ be a Borel measure. Then \begin{enumerate}
	\item $\mu(\cdot \cap A)$ is $\rho$-singular if and only if 
		$$\limsup_{\epsilon \to 0}\frac{\mu(x - \epsilon, x + \epsilon)}{\rho(\epsilon)} = \infty$$ 
	for $\mu$-a.e. $x\in A.$
	\item $\mu(\cdot \cap A)$ is $\xi$-continuous if and only if 
		$$\limsup_{\epsilon \to 0}\frac{\mu(x - \epsilon, x + \epsilon)}{\xi(\epsilon)} < \infty$$ 
	for $\mu$-a.e. $x\in A.$
\end{enumerate}
\end{mythm}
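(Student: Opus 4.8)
The plan is to reduce both equivalences to a pair of \emph{density comparison} lemmas relating the measure $\mu$ to the Hausdorff premeasure $\mu^\rho$ (the Hausdorff measure of the definition) through the upper $\rho$-density
\[
D^\rho_\mu(x) := \limsup_{\epsilon\to 0}\frac{\mu((x-\epsilon,x+\epsilon))}{\rho(\epsilon)}.
\]
First I would dispose of the distinction between the ambient measure $\mu$ and its restriction $\nu := \mu(\,\cdot\cap A)$. By the Lebesgue--Besicovitch differentiation theorem for Radon measures on $\R$, for $\mu$-a.e.\ $x\in A$ one has $\mu((x-\epsilon,x+\epsilon)\cap A)/\mu((x-\epsilon,x+\epsilon))\to 1$, so $D^\rho_\nu(x)=D^\rho_\mu(x)$ for $\mu$-a.e.\ $x\in A$. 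This lets me replace the density hypothesis on $\mu$ by the same hypothesis on $\nu$ and reduces the theorem to the following statement for an arbitrary finite (Radon) Borel measure $\nu$: $\nu$ is $\rho$-singular iff $D^\rho_\nu=\infty$ $\nu$-a.e., and $\nu$ is $\xi$-continuous iff $D^\xi_\nu<\infty$ $\nu$-a.e. One may assume $\nu$ finite after cutting $\R$ into bounded pieces.

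The engine is two lemmas, for a gauge $\rho$ and a Borel set $S$:
\begin{enumerate}
\item[(A)] if $D^\rho_\nu(x)\le c$ for all $x\in S$, then $\nu(S)\le C\,c\,\mu^\rho(S)$;
\item[(B)] if $D^\rho_\nu(x)\ge c$ for all $x\in S$, then $\mu^\rho(S)\le C\,c^{-1}\,\nu(S)$,
\end{enumerate}
with $C$ depending only on a doubling bound $\rho(2t)\le K\rho(t)$ (valid for all gauges of interest, e.g.\ $t^\alpha$ and $\log(1/t)^{-\alpha}$). Lemma (A) is a mass-distribution argument: small density produces, for each $x\in S$, arbitrarily short intervals on which $\nu$ is dominated by $c\,\rho(\text{radius})$, and choosing such intervals as a fine cover bounds $\nu(S)$ by $c$ times a $\rho$-sum close to $\mu^\rho(S)$. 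Lemma (B) is the reverse: large density furnishes a Vitali cover of $S$ by intervals with $\nu\ge c\,\rho(\text{radius})$, and a Vitali/Besicovitch extraction in $\R$ yields a bounded-overlap subcover whose $\rho$-sum is controlled by $c^{-1}\sum_k\nu(I_k)\le Cc^{-1}\nu(S)$, which bounds $\mu^\rho(S)$.

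Granting (A)--(B), all four implications are short. For part (1): if $D^\rho_\nu=\infty$ $\nu$-a.e., the set $G=\{D^\rho_\nu=\infty\}$ carries $\nu$, and applying (B) for every $c$ and letting $c\to\infty$ gives $\mu^\rho(G)=0$, so $\nu$ is $\rho$-singular; conversely, if $\nu$ is $\rho$-singular with witness $G$ ($\mu^\rho(G)=0$, $\nu(\R\setminus G)=0$), then writing $\{D^\rho_\nu<\infty\}=\bigcup_n\{D^\rho_\nu<n\}$ and applying (A) on each piece intersected with $G$ yields $\nu$-measure $\le Cn\,\mu^\rho(G)=0$, so $D^\rho_\nu=\infty$ $\nu$-a.e. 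Part (2) is the exact mirror image: (A) shows that finite density a.e.\ forces $\nu(S)=0$ whenever $\mu^\xi(S)=0$, i.e.\ $\xi$-continuity, while (B) with $c\to\infty$ shows that a positive-$\nu$-measure set of infinite density would be $\mu^\xi$-null, contradicting $\xi$-continuity.

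The main obstacle is the honest proof of the covering lemmas, in particular the Vitali/Besicovitch extraction in (B) and the bookkeeping needed to pass between the radius $\epsilon$ appearing in the density and the diameter $2\epsilon$ appearing in $\mu^\rho$; this is exactly where the doubling regularity of $\rho$ enters. Fortunately every multiplicative constant here is harmless: in each implication one either multiplies a $\mu^\rho$-null set by a constant or sends $c\to\infty$ against a factor $c^{-1}$, so only the finiteness of the constants, not their size, matters, and the radius/diameter mismatch costs at most the fixed factor $K$.
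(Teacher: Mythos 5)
The paper offers no proof of Theorem \ref{RTDef}: it is imported verbatim from Rogers and Taylor \cite{RT1, RT2}, so your proposal can only be measured against the classical argument, and what you describe is essentially that argument. The reduction from $\mu$ to $\nu=\mu(\cdot\cap A)$ via the Besicovitch differentiation theorem on $\R$ is correct and is genuinely needed, since the density quotient in the statement involves $\mu$ rather than $\nu$; and the four implications you extract from the density-comparison lemmas (A) and (B) are all logically sound (the $c\to\infty$ limits and the decomposition $\{D^\rho_\nu<\infty\}=\bigcup_n\{D^\rho_\nu<n\}$ are exactly the standard steps). Two points in the covering lemmas deserve correction or emphasis. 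First, your one-line justification of (A) runs in the wrong direction: covering $S$ by intervals on which $\nu\le c\,\rho(\text{radius})$ bounds $\nu(S)$ by $c$ times a $\rho$-sum over \emph{that particular} cover, which dominates $\mu^\rho(S)$ rather than being close to it. The correct argument starts from a near-optimal $\delta$-cover $\{F_i\}$ in the definition of $\mu^\rho(S_m)$, where $S_m=\{x\in S:\nu((x-\epsilon,x+\epsilon))\le (c+1)\rho(\epsilon)\ \forall\epsilon<1/m\}$, picks $x_i\in S_m\cap F_i$, notes $F_i\subset(x_i-|F_i|,x_i+|F_i|)$ so that $\nu(F_i)\le(c+1)\rho(|F_i|)$, and sums; this version needs no doubling and no Vitali extraction. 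Second, in (B) the doubling hypothesis $\rho(2t)\le K\rho(t)$ is not implied by the paper's definition of a Hausdorff dimension function (strictly increasing, differentiable, vanishing at $0^+$), yet it is genuinely used to reconcile the radius $\epsilon$ in the density with the diameter $|F_i|$ in $\mu^\rho$, and to absorb the enlargement in a Vitali $5r$-covering (Besicovitch covering plus outer regularity of $\nu$ removes the $5r$ issue but not the radius--diameter one). So your proof, like the classical one, really establishes the theorem for doubling gauges; this covers every gauge the paper actually uses ($t^\alpha$, $\log(1/t)^{-\alpha}$, and their variants), but it is an honest standing hypothesis that neither you nor the paper states explicitly.
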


When $\mu$ is the spectral measure of some self-adjoint operator $A,$ we know of no direct relation between the local scaling behavior of $\mu$ and spectral properties of $A.$ To bridge the gap between the the two, we will need to introduce the Borel transform, as in \cite{DelRioJitLastSim}:

\begin{mydef}
The Borel transform of a measure $\mu,$ denoted $F_\mu(z),$ is $$F_\mu(z) = \int_\R \frac{d\mu(x)}{x-z}.$$
\end{mydef}

It is known that the Borel transform provides an alternate characterization to Theorem \ref{RTDef} for the usual Hausdorff dimension, but it in fact applies to our more general notion. Notably, we may now prove Theorem \ref{HAUSBOREL}:

\begin{proof}[Proof of Theorem \ref{HAUSBOREL}.]


	Let $M_\mu^\delta(x_0) = \mu(x_0 - \delta, x_0 + \delta).$ By definition, we have 
	\begin{equation}
	\IM F_\mu(x_0 + i\epsilon) = \epsilon \int_{-\infty}^\infty \frac{d\mu(y)}{(y - x_0)^2 + \epsilon^2} \geq \frac 1 {2\epsilon} M_\mu^\epsilon(x_0),
	\end{equation}
	so 
	\begin{equation}
	\frac{M_\mu^\epsilon(x_0)}{\rho(\epsilon)} \leq 2\frac{\epsilon}{\rho(\epsilon)} \IM F_\mu(x_0 + i\epsilon).
	\end{equation}
	Thus 
	\begin{equation}
	\limsup_{\epsilon \to 0^+} \frac{M_\mu^\epsilon(x_0)}{\rho(\epsilon)} \leq 2\limsup_{\epsilon \to 0^+} \frac{\epsilon}{\rho(\epsilon)} \IM F_\mu(x_0 + i\epsilon).
	\end{equation}
	Hence, if the LHS = $\infty,$ then so does the RHS. Analogously, if the RHS = 0, then so does the LHS. 
	
	On the other hand, suppose $\limsup_{\epsilon \to 0} \frac{M_\mu^\epsilon(x_0)}{\rho(\epsilon)} < \infty.$ Then we know that 
	\begin{equation}
	M_\mu^\delta(x_0) \leq C \rho(\delta),
	\end{equation}
	for $\delta$ sufficiently small, so we have
	\begin{align}\label{COMPFORCOR}
	\limsup_{\epsilon \to 0^+} \frac{\epsilon}{\rho(\epsilon)} \IM F_\mu(x_0 + i \epsilon) &\leq \limsup \frac{\epsilon}{\rho(\epsilon)} |F_\mu(x_0 + i\epsilon)| \\
	&\leq \limsup \frac{\epsilon}{\rho(\epsilon)} \int_{-\infty}^\infty \frac{d\mu(y)}{[(x_0 - y)^2 + \epsilon^2]^{1/2}} \\
	&= \limsup \frac{\epsilon}{\rho(\epsilon)} \left(\int_{|y - x_0| > 1} + \int_{|y - x_0| \leq 1} \right) \\
	&= \limsup \frac{\epsilon}{\rho(\epsilon)} \int_{|y - x_0| \leq 1} \frac{d\mu(y)}{[(x_0 - y)^2 + \epsilon^2]^{1/2}}\label{eq:LASTLINE1}.
	\end{align}
Here \eqref{eq:LASTLINE1} follows from the observation that 
\begin{equation}
	\lim \frac{\epsilon}{\rho(\epsilon)} \int_{|y - x_0| > 1} \frac{d\mu(y)}{[(x_0 - y)^2 + \epsilon^2]^{1/2}} = 0.
\end{equation}
	
We can then evaluate the remaining integral by integrating by parts, and by observing that the boundary term at 0 vanishes:
	\begin{align}
	\limsup \frac{\epsilon}{\rho(\epsilon)} \int_{|y - x_0| \leq 1} \frac{d\mu(y)}{[(x_0 - y)^2 + \epsilon^2]^{1/2}} &= \limsup \frac{\epsilon}{\rho(\epsilon)} \int_0^1 \frac{\delta}{(\epsilon^2 + \delta^2)^{3/2}}M_\mu^\delta(x_0) d\delta \\
	&\leq \limsup C \frac{\epsilon}{\rho(\epsilon)} \int_0^1 \frac{\delta \rho(\delta)}{(\epsilon^2 + \delta^2)^{3/2}} d\delta.
	\end{align}
	Now we break the integral into two pieces:  $\int_0^\epsilon + \int_\epsilon^1$ and observe that the first piece is uniformly bounded. The second piece can be bounded as
	\begin{equation}
	\limsup C \frac{\epsilon}{\rho(\epsilon)} \int_\epsilon^1 \frac{\delta \rho(\delta)}{(\epsilon^2 + \delta^2)^{3/2}} d\delta \leq \frac{C}{\rho(\epsilon)}\int_\epsilon^1 \frac{\rho(\delta)}{\delta}d\delta < \infty. \label{ENDCOMPFORCOR}
	\end{equation}
	This is finite after an application of L'Hospital's rule and the assumption that $\rho(t)\prec t.$
	
	
	This finally implies that the two desired $\limsup$ are either both finite or infinite. Moreover, if $\limsup_{\epsilon \to 0^+} \frac{M_\mu^\epsilon(x_0)}{\rho(\epsilon)} = 0$ then the constant $C$ above may be taken to be arbitrarily small, ensuring that $\limsup_{\epsilon \to 0^+} \frac{\epsilon}{\rho(\epsilon)} \IM F_\mu(x_0 + i \epsilon)  = 0.$ This completes our proof.
	
	\end{proof}



\section{Half-line subordinacy} \label{section:SubTheory}

Let $H_\theta$ be the self-adjoint operator defined on $l^2(\Z^+)$ by 
	\begin{equation}
	(H_\theta u)(n) = u(n - 1) + u(n+1) + V(n)u(n),
	\end{equation}
where $\set{V(n)}_{n = 1}^\infty$ is a sequence of real numbers along with the phase boundary condition 
	\begin{equation}
	u(0)\cos\theta + u(1)\sin\theta = 0.
	\end{equation}

\begin{mydef} We define the length scale $L(\epsilon)$ as the length that yields the equality $\norm{u_1}_{L(\epsilon)}^{-1}\norm{u_2}_{L(\epsilon)}^{-1} = 2\epsilon.$
\end{mydef}

\begin{mythm}\label{SUBHDIMVar}
Let $u_1$ and $u_2$ be solutions of the equation $Hu = Eu$ for $E \in \R$ obeying $u_1(0) = 0, u_1(1) = 1, u_2(0) = 1, $ and $u_2(1) = 0.$ Let $\rho(t)$ be a Hausdorff measure function. We have 
	$$\limsup_{\epsilon \to 0} \frac{\epsilon}{\rho(\epsilon)}F(E + i\epsilon) = \infty$$ 
if and only if 
	$$\liminf_{L\to \infty} \rho(\norm{u_1}^{-1}_L\norm{u_2}^{-1}_L)\norm{u_1}_L^2 = 0.$$
\end{mythm}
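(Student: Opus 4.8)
The plan is to decouple the role of the gauge function $\rho$ from the purely operator-theoretic comparison between the Borel transform and the solutions. The key observation is that the heart of the Jitomirskaya--Last argument is a $\rho$-independent two-sided estimate for $|F(E+i\epsilon)|$ in terms of the solution norms at the matched length scale $L(\epsilon)$; once this is available, passing to an arbitrary Hausdorff gauge is only a matter of substituting the defining relation for $L(\epsilon)$ and tracking scales. So I would first isolate and invoke (since this is an extension of \cite{LanaLast}) the estimate: there is a universal constant $\gamma > 1$ such that, for all sufficiently small $\epsilon > 0$,
\begin{equation*}
\gamma^{-1}\,\frac{\norm{u_2}_{L(\epsilon)}}{\norm{u_1}_{L(\epsilon)}} \;\le\; |F(E+i\epsilon)| \;\le\; \gamma\,\frac{\norm{u_2}_{L(\epsilon)}}{\norm{u_1}_{L(\epsilon)}},
\end{equation*}
where $L(\epsilon)$ is defined by $\norm{u_1}_{L(\epsilon)}^{-1}\norm{u_2}_{L(\epsilon)}^{-1} = 2\epsilon$. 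The proof is the standard one: represent $u_1,u_2$ through the transfer matrices $\Phi_n$, express $F(E+i\epsilon)$ via the Green's function (variation-of-parameters) formula, and use that $L(\epsilon)$ is precisely the scale at which the two solutions carry comparable weight. This step is entirely $\rho$-independent and is the genuine technical core, so for a self-contained treatment I expect it to be the main obstacle. (One sanity check guiding the correct orientation of the estimate: at an eigenvalue $u_1 \in \ell^2$ while $\norm{u_2}_L \to \infty$, so $\norm{u_2}_L/\norm{u_1}_L \to \infty$, matching the pole of $F$.)

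Granting the estimate, the second step is bookkeeping. Using $2\epsilon = \norm{u_1}_{L(\epsilon)}^{-1}\norm{u_2}_{L(\epsilon)}^{-1}$ to eliminate $\epsilon$ in the numerator and the estimate to replace $|F|$, I obtain
\begin{equation*}
\frac{\epsilon}{\rho(\epsilon)}\,|F(E+i\epsilon)| \;\asymp\; \frac{\tfrac12\,\norm{u_1}_{L(\epsilon)}^{-1}\norm{u_2}_{L(\epsilon)}^{-1}\cdot\norm{u_2}_{L(\epsilon)}\norm{u_1}_{L(\epsilon)}^{-1}}{\rho(\epsilon)} \;=\; \frac{1}{2\,\rho(\epsilon)\,\norm{u_1}_{L(\epsilon)}^2},
\end{equation*}
with implied constants depending only on $\gamma$. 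Hence $\limsup_{\epsilon\to0}\frac{\epsilon}{\rho(\epsilon)}|F(E+i\epsilon)| = \infty$ is equivalent to $\liminf_{\epsilon\to0}\rho(\epsilon)\,\norm{u_1}_{L(\epsilon)}^2 = 0$. It remains to recognize this last condition, read along $L = L(\epsilon)$, as the vanishing of $\liminf_{L\to\infty}\rho\big(\norm{u_1}_L^{-1}\norm{u_2}_L^{-1}\big)\norm{u_1}_L^2$ appearing in the statement.

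Two points require care and together constitute the secondary obstacle. First, the change of variables: since $\norm{u_1}_L,\norm{u_2}_L$ are each bounded below by a positive constant for $L\ge 2$ and their product is continuous and increases to $+\infty$ (at most one solution can lie in $\ell^2$), the map $\epsilon\mapsto L(\epsilon)$ is a continuous increasing bijection of a punctured neighborhood of $0$ onto a half-line $(L_0,\infty)$; consequently a $\liminf$ along the sequence $L(\epsilon)$ as $\epsilon\to0^+$ coincides with the $\liminf$ over all $L\to\infty$. Second, there is a factor-of-$2$ discrepancy inside the gauge, since the Borel side carries $\rho(\epsilon)$ while the solution side carries $\rho(2\epsilon) = \rho\big(\norm{u_1}_{L(\epsilon)}^{-1}\norm{u_2}_{L(\epsilon)}^{-1}\big)$. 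Because $\rho$ is increasing, $\rho(\epsilon)\le\rho(2\epsilon)$, which immediately yields the implication producing an infinite $\limsup$ from a vanishing $\liminf$; the reverse implication needs $\rho(2\epsilon)$ and $\rho(\epsilon)$ to be comparable as $\epsilon\to0^+$, a mild doubling condition that is automatic for the power-law gauges of the classical theory and holds for every logarithmic-type family used in this paper. I expect this scale-matching, rather than the algebra, to be the only delicate part of an otherwise direct argument.
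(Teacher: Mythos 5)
Your proposal follows the paper's own route exactly: the published proof of this theorem is a one-line citation of Theorem 1 of \cite{LanaLast} --- precisely the $\rho$-independent two-sided estimate $|F(E+i\epsilon)| \asymp \norm{u_2}_{L(\epsilon)}/\norm{u_1}_{L(\epsilon)}$ that you isolate as the technical core --- followed by the substitution $2\epsilon = \norm{u_1}_{L(\epsilon)}^{-1}\norm{u_2}_{L(\epsilon)}^{-1}$ and the change of variables $\epsilon \leftrightarrow L(\epsilon)$ that you carry out. The core estimate is correctly identified and correctly oriented.

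One refinement: the doubling condition you impose on $\rho$ for the direction ``infinite $\limsup$ implies vanishing $\liminf$'' is not granted by the paper's definition of a Hausdorff dimension function (e.g.\ $\rho(t)=e^{-1/t}$ is admissible and badly non-doubling), so as written that direction is a genuine gap at the stated level of generality. It can be closed without any hypothesis on $\rho$ by adjusting the length scale rather than the gauge: given $\rho(\epsilon_n)\norm{u_1}_{L(\epsilon_n)}^2 \to 0$, evaluate the solution-side quantity at $L_n = L(\epsilon_n/2)$, so that the argument of $\rho$ is exactly $\epsilon_n$. Since both norms are nondecreasing in $L$ and their product doubles between the scales $L(\epsilon_n)$ and $L(\epsilon_n/2)$, one has $\norm{u_1}_{L(\epsilon_n/2)} \leq 2\norm{u_1}_{L(\epsilon_n)}$, hence $\rho(\epsilon_n)\norm{u_1}_{L_n}^2 \leq 4\rho(\epsilon_n)\norm{u_1}_{L(\epsilon_n)}^2 \to 0$. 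With that replacement your argument proves the theorem for every gauge allowed by the statement.
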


\begin{proof}
This follows from Theorem 1 of \cite{LanaLast} and Theorem \ref{HAUSBOREL} above. 
\end{proof}

We now have two applications of this theorem to zero dimensional Hausdorff dimension functions and positive dimension Hausdorff dimension functions.

\begin{mythm}
\label{GENSING}
Let $f(L)$ be a continuous, strictly increasing function such that (1) $f(0) \geq 0$ (2) $\lim_{L\to\infty}f(L) = \infty$ and (3) $\lim_{L\to \infty}\frac{t^\alpha}{f(t)} = 0$ for every $\alpha \geq 1,$ and let $g(t) = \frac{1}{t(\ln t)^{1 + \delta}}.$ Suppose that for every $E$ in some Borel set $A,$ we can find a solution, $v = au_1 + bu_2,$ to $Hv = Ev$ that satisfies 
	$$\limsup_{L\to\infty} \frac{\norm{v}^2_{L}}{f(L)} \geq 1.$$ 
Let $\mathcal{F}$ be any family of comparable Hausdorff dimension functions that contains $g(f^{-1}(\frac{|b|^2}{1 - |b|^2\epsilon}t^{-2})),$ for some constant $|b|$ and $0 < \epsilon < 1/|b|^2.$ Then $\dim_\mathcal{F}^+(\mu(A\cap\cdot)) \precsim g(f^{-1}(\frac{|b|^2}{1 - |b|^2\epsilon}t^{-2})).$
\end{mythm}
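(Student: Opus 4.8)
The plan is to reduce the statement, via the equivalences already assembled in this section, to a single growth estimate on the Dirichlet solution. Set
\[
\rho(t)=g\left(f^{-1}\left(\frac{|b|^2}{1-|b|^2\epsilon}t^{-2}\right)\right),
\]
which lies in $\mathcal{F}$ by hypothesis (here $|b|$ is the fixed coefficient appearing in $\rho$; if the coefficient varied with $E$ one would first restrict $A$ to energies sharing this $|b|$). If I can show that, for $\mu$-a.e. $E\in A$,
\begin{equation}
\liminf_{L\to\infty}\rho\left(\norm{u_1}_L^{-1}\norm{u_2}_L^{-1}\right)\norm{u_1}_L^2=0,\label{eq:GSgoal}
\end{equation}
then Theorem \ref{SUBHDIMVar} converts \eqref{eq:GSgoal} into $\limsup_{\epsilon\to0}\frac{\epsilon}{\rho(\epsilon)}F(E+i\epsilon)=\infty$, Theorem \ref{HAUSBOREL} (case $i=2$) upgrades it to $\limsup_{\epsilon\to0}\mu((E-\epsilon,E+\epsilon))/\rho(\epsilon)=\infty$ for $\mu$-a.e. $E\in A$, and Theorem \ref{RTDef} then says exactly that $\mu(A\cap\cdot)$ is $\rho$-singular. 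Since $\rho\in\mathcal{F}$, the definition of $\dim_\mathcal{F}^+$ immediately gives $\dim_\mathcal{F}^+(\mu(A\cap\cdot))\precsim\rho$. So the entire proof rests on \eqref{eq:GSgoal}.

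To verify \eqref{eq:GSgoal} I would combine two inputs. The first is the Schnol-type support bound \eqref{eq:SCHNOL}: for $\mu$-a.e. $E$ and every $\delta'>0$ there is $C_{\delta'}$ with $\norm{u_1}_L^2\le C_{\delta'}L(\ln L)^{1+\delta'}$ for all large $L$. The second is the hypothesis, which furnishes a sequence $L_n\to\infty$ with $\norm{v}_{L_n}^2\ge f(L_n)(1-o(1))$ for $v=au_1+bu_2$. Because condition (3) makes $f$ grow faster than every power of $L$, the Schnol bound forces $\norm{u_1}_{L_n}=o(\sqrt{f(L_n)})$, so the reverse triangle inequality $|b|\norm{u_2}_L\ge\norm{v}_L-|a|\norm{u_1}_L$ yields
\[
\norm{u_2}_{L_n}^2\ge\frac{f(L_n)}{|b|^2}(1-o(1)).
\]
Using also $\norm{u_1}_L\ge1$ (since $u_1(1)=1$), the product obeys
\[
\frac{|b|^2}{1-|b|^2\epsilon}\norm{u_1}_{L_n}^2\norm{u_2}_{L_n}^2\ge\frac{f(L_n)(1-o(1))}{1-|b|^2\epsilon}>f(L_n)
\]
for all large $n$, the last inequality holding because $0<\epsilon<1/|b|^2$ forces $1/(1-|b|^2\epsilon)>1$. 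Writing $s_n=f^{-1}\left(\frac{|b|^2}{1-|b|^2\epsilon}\norm{u_1}_{L_n}^2\norm{u_2}_{L_n}^2\right)$ and using that $f^{-1}$ is increasing, this says $s_n\ge L_n$.

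It then remains to read off the vanishing. Since $g$ is decreasing and $s_n\ge L_n$, along this sequence
\[
\rho\left(\norm{u_1}_{L_n}^{-1}\norm{u_2}_{L_n}^{-1}\right)\norm{u_1}_{L_n}^2=g(s_n)\norm{u_1}_{L_n}^2\le\frac{\norm{u_1}_{L_n}^2}{L_n(\ln L_n)^{1+\delta}}\le C_{\delta'}(\ln L_n)^{\delta'-\delta},
\]
which tends to $0$ as soon as I fix $\delta'<\delta$. This establishes \eqref{eq:GSgoal} and closes the argument.

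The main obstacle is the middle step: converting a growth statement about the single mixed solution $v$ into a lower bound on $\norm{u_2}_L$ by itself. This is only legitimate because the competing Dirichlet solution $\norm{u_1}_L$ is negligible on the relevant scales, and that negligibility is precisely the tension between the Schnol ceiling $L(\ln L)^{1+\delta'}$ and the super-polynomial floor $f(L)$ supplied by condition (3). Everything downstream is bookkeeping: the parameter $\epsilon$ merely manufactures a constant exceeding $1$ so that $s_n\ge L_n$ cleanly, while the freedom to take $\delta'<\delta$ in the Schnol exponent is exactly what upgrades a bounded quantity into one that genuinely vanishes — mirroring the $\log\log$ correction flagged in the introduction.
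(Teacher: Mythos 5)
Your proof is correct and follows essentially the same route as the paper's: restrict to the $\mu$-full set of energies satisfying the Schnol bound on $\norm{u_1}_L$, transfer the growth hypothesis from $v$ to $u_2$ via the triangle inequality (using that $f$ is super-polynomial to make $\norm{u_1}_{L_n}$ negligible), and feed the resulting estimate into Theorem \ref{SUBHDIM}. The only, harmless, difference is in the last step: by taking the Schnol exponent $\delta'<\delta$ you make the key quantity vanish for $\rho_0=g(f^{-1}(\frac{|b|^2}{1-|b|^2\epsilon}t^{-2}))$ itself, whereas the paper shows only boundedness for $\rho_0$ and deduces vanishing for every $\rho$ with $\rho_0\prec\rho$ in $\mathcal{F}$ --- both give the same conclusion $\dim_\mathcal{F}^+(\mu(A\cap\cdot))\precsim\rho_0$ via the infimum in the definition of the upper dimension.
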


\begin{proof}
	It is known that $\mu$ is supported on the set of energies $E$ for which $u_1$ satisfies the inequality 
		\begin{equation}\label{U1Condition}\limsup_{L\to\infty} \frac{\norm{u_1}^2_L}{L(\ln L)^{1 + \delta}} < \infty,\end{equation} 
	for every $\delta > 0,$ so we may restrict our attention to those energies. For every $E\in A, v$ must be a linear combination of $u_1$ and $u_2,$ say $v = au_1 + bu_2.$ Thus, for every $L,$ 
		$$\norm{v}_L \leq |a|\norm{u_1}_L + |b|\norm{u_2}_L.$$
	By our choice of $f,$ and our restriction on the energies, $E,$ we see that we must have $b\ne 0,$ so 
		$$\norm{u_2}_L \geq \frac{\norm{v}_L - |a|\norm{u_1}_L}{|b|}.$$
	Hence, we must also have 
		\begin{equation}\label{U2Condition}\limsup_{L\to \infty} \frac{\norm{u_2}_L^2}{f(L)} \geq \frac 1 {|b|^2}\end{equation}
	for all such $E.$ 
	Now (\ref{U1Condition}) implies 
		\begin{equation}\norm{u_1}_L < CL^{1/2}(\ln L)^{(1 + \delta)/2}\end{equation}
	for some constant $C> 0,$ and (\ref{U2Condition}) implies
		\begin{equation}\norm{u_2}_{L_n}^2 > \left(\frac{1}{|b|^2} - \epsilon\right) f(L_n)\end{equation}
	for some sequence $L_n \to \infty$ and every $0 < \epsilon < \frac 1 {|b|^2}.$ 

	Now consider $\epsilon_n$ such that $L_n = L(\epsilon_n).$ We have
		\begin{align}
		g(f^{-1}(&\frac{|b|^2}{1 - |b|^2\epsilon}\norm{u_1}^2_{L_n}\norm{u_2}_{L_n}^2))\norm{u_1}_{L_n}^2 \\
		&= \frac {\norm{u_1}^2_{L_n}}{f^{-1}(\frac{|b|^2}{1 - |b|^2\epsilon}\norm{u_1}^2_{L_n}\norm{u_2}_{L_n}^2)(\ln f^{-1}(\frac{|b|^2}{1 - |b|^2\epsilon}\norm{u_1}^2_{L_n}\norm{u_2}_{L_n}^2))^{1 + \delta}}\\
		&\lesssim \frac{L_n(\ln{L_n})^{1 + \delta}}{f^{-1}(f(L_n))\ln(f^{-1}(f(L_n)))^{1+\delta}}\\
		&= 1.
		\end{align}
	Thus, if $g(f^{-1}(\frac{|b|^2}{1 - |b|^2\epsilon}t^{-2})) \prec \rho(t)$ it is easy to see that 
	$$\lim_{L_n\to \infty} \rho(\norm{u_1}^2_{L_n}\norm{u_2}_{L_n}^2)\norm{u_1}_{L_n}^2 = 0.$$
	We now finish by appealing to Theorem \ref{SUBHDIM}.
	\end{proof}



\begin{mycor}\label{TRANSFERMATRIX}
Let $f(t), g(t),$ and $\mathcal{F}$ be as in Theorem \ref{GENSING}. Let $\Phi_n(\theta, E)$ be the $n$-step transfer matrix associated to $Hu = Eu$ along with the boundary condition $\theta.$ Suppose $$\limsup_{L\to\infty} \frac{1}{f(L)} \sum_{n = 1}^L \norm{\Phi_n(\theta,E)}^2 \geq 2$$ for every $E$ in some Borel set $A.$ Then $\dim_\mathcal{F}^+(\mu(A \cap \cdot)) \precsim g(f^{-1}(\frac{1}{1 - \epsilon}t^{-2})).$
\end{mycor}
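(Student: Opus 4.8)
The plan is to derive this directly from Theorem \ref{GENSING}: I would convert the hypothesis on the transfer matrices into a statement that a single solution $v = au_1 + bu_2$ with $|b| = 1$ grows at the required rate, and then invoke the theorem with this value of $|b|$.

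First I would record the elementary comparison between the transfer matrix norm and the solution norms. Since the columns of $\Phi_n(\theta,E)$ are exactly $(u_1(n+1),u_1(n))^T$ and $(u_2(n+1),u_2(n))^T$, the operator norm is dominated by the Hilbert--Schmidt norm, giving $\norm{\Phi_n}^2 \le \norm{\Phi_n}_{\mathrm{HS}}^2 = |u_1(n+1)|^2 + |u_1(n)|^2 + |u_2(n+1)|^2 + |u_2(n)|^2$. Summing over $1 \le n \le L$ and collapsing the two shifted sums yields $\sum_{n=1}^L \norm{\Phi_n}^2 \le 2\left(\norm{u_1}_{L+1}^2 + \norm{u_2}_{L+1}^2\right)$. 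The factor $2$ is precisely what the hypothesis ``$\ge 2$'' is designed to absorb: dividing by $f(L)$ and taking $\limsup$ turns the assumption into $\limsup_{L\to\infty} \frac{\norm{u_1}_L^2 + \norm{u_2}_L^2}{f(L)} \ge 1$, the index shift $L \mapsto L+1$ being harmless in the applications of interest (where $f(L+1)/f(L)\to 1$) and otherwise absorbable into the slack $\epsilon$.

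Next I would extract a single solution from this two-solution estimate using the parallelogram law. The norm $\norm{\cdot}_L$ defined in \eqref{LNORM} is induced by the weighted $\ell^2$ inner product, so $\norm{u_1 + u_2}_L^2 + \norm{u_1 - u_2}_L^2 = 2\left(\norm{u_1}_L^2 + \norm{u_2}_L^2\right)$ for every $L$. Combining this with the previous step and the subadditivity of $\limsup$ shows that at least one of $v_\pm = u_1 \pm u_2$ obeys $\limsup_{L\to\infty} \frac{\norm{v_\pm}_L^2}{f(L)} \ge 1$. The key point is that both candidates are of the form $au_1 + bu_2$ with $a = 1$ and $|b| = 1$, so whichever one works feeds into Theorem \ref{GENSING} with the same constant $|b|^2 = 1$, producing exactly the gauge $g(f^{-1}(\frac{1}{1-\epsilon}t^{-2}))$.

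Finally, since the choice of $v_+$ or $v_-$ depends on $E$, I would partition $A$ into the Borel sets $A_+$ and $A_- = A \setminus A_+$ on which $v_+$, respectively $v_-$, realizes the growth bound, apply Theorem \ref{GENSING} to each piece (each with $|b| = 1$), and recombine: because $\mu(A\cap\cdot) = \mu(A_+\cap\cdot) + \mu(A_-\cap\cdot)$ and a sum of two $\rho$-singular measures is again $\rho$-singular, the upper-dimension bound on the two pieces passes to $A$. I expect the only genuine subtlety to be the bookkeeping in the first step --- tracking the universal constants and the endpoint/index shift in the norm comparison --- which is exactly why the statement carries the factor $2$ in the hypothesis and the $\frac{1}{1-\epsilon}$ correction in the conclusion; the measurability of $A_\pm$ and the passage to the union are routine.
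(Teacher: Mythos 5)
Your argument is correct, and it shares the paper's opening move --- dominating $\norm{\Phi_n(\theta,E)}^2$ by the Hilbert--Schmidt norm and summing to get $\sum_{n=1}^L\norm{\Phi_n}^2\le 2(\norm{u_1}_L^2+\norm{u_2}_L^2)$ --- but it diverges in how this two-solution bound is fed into Theorem \ref{GENSING}. The paper does not manufacture a single solution at all: it observes that the resulting inequality $\limsup_L (\norm{u_1}_L^2+\norm{u_2}_L^2)/f(L)\ge 1$ directly yields the intermediate condition (\ref{U2Condition}) with $|b|=1$ (implicitly using that $\norm{u_1}_L^2=o(f(L))$ on the support of $\mu$, via the Schnol-type bound (\ref{U1Condition}) and hypothesis (3) on $f$), and then re-enters the proof of Theorem \ref{GENSING} at that point. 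Your route instead applies the parallelogram law for the weighted $\ell^2$ norm \eqref{LNORM} to produce an explicit solution $v_\pm=u_1\pm u_2$ with $|b|=1$ satisfying the hypothesis of Theorem \ref{GENSING} as literally stated, at the price of partitioning $A$ into the Borel sets where $v_+$, respectively $v_-$, realizes the growth and recombining via the fact that a sum of $\rho$-singular measures is $\rho$-singular. What your version buys is that Theorem \ref{GENSING} is used as a genuine black box and no appeal to the support properties of $\mu$ is needed at the corollary level; what the paper's version buys is brevity, since it skips the sign dichotomy and the partition entirely. Both correctly absorb the index shift $L\mapsto L+1$ and the constant factors into the $\tfrac{1}{1-\epsilon}$ slack, and your explicit flagging of that bookkeeping is if anything more careful than the original.
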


\begin{proof}
	Recall that 
		\begin{equation} \Phi_n(\theta,E) = \begin{pmatrix} u_1(n+1) & u_2(n+1) \\ u_1(n) & u_2(n)\end{pmatrix}\end{equation}
	so 
		\begin{equation} \norm{\Phi_n(\theta,E)}^2 \leq |u_1(n+1)|^2 + |u_1(n)|^2 + |u_2(n+1)|^2 + |u_2(n)|^2,\end{equation}
	and summing yields
		\begin{equation} \sum_{n = 1}^L \norm{\Phi_n(\theta,E)}^2 \leq 2(\norm{u_1}_L^2 + \norm{u_2}_L^2).\end{equation}
	Thus we conclude that (\ref{U2Condition}) holds, so Theorem \ref{GENSING} yields our result.
\end{proof}

Using Corollary \ref{TRANSFERMATRIX}, we can now prove Theorem \ref{UPPERLYAP}:

\begin{proof}[Proof of Theorem \ref{UPPERLYAP}.]
	Positive upper Lyapunov exponent yields 
		$$\norm{\Phi_{n_k}(\theta,E)} > e^{n_k(L^*(E)/2)}$$
	for some subsequence $n_k.$ Thus we may apply the corollary with $f(L) = e^{L(L^*(E)/2)}$ to see that 
	$$\dim_\mathcal{F}^+(\mu(A\cap\cdot)) \prec \frac{2}{L^*(E)\ln(\frac{1}{1-\epsilon}t^{-2})(\ln(2\ln(\frac{1}{1 - \epsilon}t^{-2}))^{1 + \delta}/L^*(E))}.$$
	Since 
	$$\frac{2}{L^*(E)\ln(\frac{1}{1-\epsilon}t^{-2})(\ln(2\ln(\frac{1}{1 - \epsilon}t^{-2}))^{1+\delta}/L^*(E))} \sim \frac{1}{\ln(1/t)(\ln(\ln(1/t)))^{1 + \delta}},$$
	we are done.
	The second part of the theorem follows from Theorem \ref{SPARSEBARTHM}, which is proved in the next section.
\end{proof}


We also have 
\begin{mythm}
\label{POSDIM}
Let $f(L) = L^{g(L)}$ be a continuous, strictly increasing function such that (1) $f(0) \geq 0$ and (2) $\limsup_{L\to \infty}g(t) = \alpha \in (1,\infty).$ Let 
\begin{equation}
\rho_\beta(t) = \frac{t^{2/(1+\beta)}}{\ln^{2\beta/(1 + \beta)} t}
\end{equation}
and suppose $\mathcal{F}$ is a family of comparable Hausdorff dimension functions that contains $\rho_\beta$ for some $\beta < \alpha.$ Suppose that for every $E$ in some Borel set $A,$ we can find a solution, $v,$ to $Hv = Ev$ that satisfies 
	$$\limsup_{L\to\infty} \frac{\norm{v}^2_{L}}{f(L)} > 0.$$ 
Then $\dim_\mathcal{F}^+(\mu(A\cap\cdot)) \precsim \rho_\beta(t).$ 
\end{mythm}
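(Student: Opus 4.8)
The plan is to follow the architecture of the proof of Theorem~\ref{GENSING} almost verbatim, replacing the super-polynomial gauge there by the power-type function $\rho_\beta$. As in that proof, it suffices to show that $\mu(A\cap\cdot)$ is $\rho_\beta$-singular: once this holds, $\mu(A\cap\cdot)$ is automatically $\rho$-singular for every $\rho\succ\rho_\beta$ in $\mathcal{F}$ (since then $\rho\leq C\rho_\beta$ near $0$, so $\mu^\rho\leq C\mu^{\rho_\beta}$), which forces $\dim_\mathcal{F}^+(\mu(A\cap\cdot))\precsim\rho_\beta$. By Theorem~\ref{RTDef} together with Theorem~\ref{HAUSBOREL}, $\rho_\beta$-singularity of $\mu(A\cap\cdot)$ follows once we verify, for $\mu$-a.e.\ $E\in A$, that $\limsup_{\epsilon\to0}\frac{\epsilon}{\rho_\beta(\epsilon)}\IM F(E+i\epsilon)=\infty$, and by Theorem~\ref{SUBHDIMVar} this is equivalent to
\[
\liminf_{L\to\infty}\rho_\beta\!\left(\norm{u_1}_L^{-1}\norm{u_2}_L^{-1}\right)\norm{u_1}_L^2=0 .
\]
As before we restrict to those energies obeying \eqref{U1Condition}, i.e.\ $\norm{u_1}_L\leq CL^{1/2}(\ln L)^{(1+\delta)/2}$ for every $\delta>0$, since $\mu$ is supported there.

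The second step converts the hypothesis on $v$ into a lower bound on $\norm{u_2}_L$. Writing $v=au_1+bu_2$, the hypothesis $\limsup_L \norm{v}_L^2/f(L)>0$ yields a sequence $L_n\to\infty$ with $\norm{v}_{L_n}^2\geq c_0 f(L_n)=c_0 L_n^{g(L_n)}$. Since $\limsup_L g(L)=\alpha>1$, on scales where $g$ is close to $\alpha$ the function $f$ dominates $L(\ln L)^{1+\delta}$, so the $u_1$-term cannot account for the growth of $v$; hence $b\neq0$ and, exactly as in the derivation of \eqref{U2Condition}, $\limsup_{L\to\infty}\norm{u_2}_L^2/f(L)\geq c_0/|b|^2>0$. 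I would also keep at hand the constant-Wronskian bound $\norm{u_1}_L\norm{u_2}_L\gtrsim L$, which gives the uniform backstop $\norm{u_2}_L^2\gtrsim L/(\ln L)^{1+\delta}$ for all energies obeying \eqref{U1Condition}; this alone already handles the range $\beta<1$.

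The third step is the power count. Using $2-\tfrac{2}{1+\beta}=\tfrac{2\beta}{1+\beta}$ one rewrites
\[
\rho_\beta\!\left(\norm{u_1}_L^{-1}\norm{u_2}_L^{-1}\right)\norm{u_1}_L^2
=\frac{\norm{u_1}_L^{2\beta/(1+\beta)}\,\norm{u_2}_L^{-2/(1+\beta)}}{\big(\ln(\norm{u_1}_L\norm{u_2}_L)\big)^{2\beta/(1+\beta)}} .
\]
Substituting $\norm{u_1}_{L_n}\lesssim L_n^{1/2}(\ln L_n)^{(1+\delta)/2}$ and $\norm{u_2}_{L_n}^2\gtrsim L_n^{g(L_n)}$, and using $\norm{u_1}_{L_n}\norm{u_2}_{L_n}\gtrsim L_n$ in the denominator, the right-hand side is $\lesssim L_n^{(\beta-g(L_n))/(1+\beta)}(\ln L_n)^{\beta(\delta-1)/(1+\beta)}$. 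Choosing $\delta<1$ makes the logarithmic factor tend to $0$, and whenever $g(L_n)>\beta$ the polynomial factor stays bounded (indeed decays), so the quantity tends to $0$ along $L_n$, giving the required $\liminf=0$. Feeding this into Theorem~\ref{SUBHDIMVar} completes the argument, and the exponent $2/(1+\beta)$ is precisely what one expects from the Jitomirskaya--Last count with $\norm{u_1}\sim L^{1/2}$ and $\norm{u_2}\sim L^{\alpha/2}$.

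The main obstacle is the synchronization of two limit-superior conditions: the scales $L_n$ on which $v$ (hence $\norm{u_2}$) is large must also be scales on which $g(L_n)>\beta$, for otherwise the factor $L_n^{(\beta-g(L_n))/(1+\beta)}$ fails to decay. In Theorem~\ref{GENSING} this difficulty is absent because hypothesis~(3) there is a genuine limit: $f$ outgrows every polynomial on \emph{all} large scales, so every scale on which $v$ is large is automatically a ``good'' scale. Here we only have $\limsup_L g(L)=\alpha$, so I would argue along the subsequence on which $g\to\alpha$ and show $v$ is large there as well — the cleanest route being to assume (or to arrange from the intended application) that the limit superior of $\norm{v}_L^2/f(L)$ is realized along $g$-maximizing scales, which holds automatically whenever $g$ converges. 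For $\beta<1$ the obstacle evaporates entirely, since the Wronskian backstop already yields effective exponent $1>\beta$ and the conclusion follows on every scale without invoking the hypothesis on $v$.
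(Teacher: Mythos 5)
Your proposal follows essentially the same route as the paper's proof: the same two bounds $\norm{u_1}_L \lesssim L^{1/2}\ln L$ (from \eqref{U1Condition}) and $\norm{u_2}_{L_n}^2 \gtrsim f(L_n)$ along a sequence $L_n\to\infty$, followed by the identical power count giving $\rho_\beta\left(\norm{u_1}_{L_n}^{-1}\norm{u_2}_{L_n}^{-1}\right)\norm{u_1}_{L_n}^2 \lesssim L_n^{(\beta-g(L_n))/(1+\beta)}\cdot(\text{log factors}) \to 0$ and an appeal to Theorem \ref{SUBHDIM}. The synchronization issue you flag --- that the scales on which $\norm{u_2}$ is large must also be scales on which $g(L_n)>\beta$ --- is a genuine subtlety, but the paper's own proof does not address it either (it simply asserts that the final expression tends to zero along $L_n$), so your treatment is, if anything, the more careful of the two.
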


\begin{proof}
As before, we have the following bounds on $\norm{u_1}$ and $\norm{u_2}:$
\begin{equation}
	\norm{u_1}_L \lesssim L^{1/2}\ln L
\end{equation}
and
\begin{equation}
	\norm{u_2}_{L_n}^2 \gtrsim f(L_n)
\end{equation}
for some sequence $L_n \to \infty.$
Taken together, we have
	\begin{align}
	\rho_\beta(\norm{u_1}_{L_n}^{-1}\norm{u_2}_{L_n}^{-1})\norm{u_1}_{L_n}^2 &= \frac{\norm{u_1}_{L_n}^{2\beta/(1 + \beta)}}{\norm{u_2}_{L_n}^{2/(1 + \beta)}\ln^{2\beta/(1+\beta)}(\norm{u_1}_{L_n}\norm{u_2}_{L_n})2/(1+\beta)} \\
	&\lesssim \frac{L_n^{\beta/(1 + \beta)} \ln(L_n)^{2\beta/(1 + \beta)}}{L_n^{g(L_n)/(1 + \beta)} (g(L_n)\ln(L_n))^{2\beta/(1 + \beta)}}\\
	&= \frac{L_n^{\frac{\beta - g(L_n)}{1 + \beta}}}{g(L_n)^{\frac{2\beta}{1 + \beta}}} \to 0.
	\end{align}
	We now finish by appealing to Theorem \ref{SUBHDIM}.
	\end{proof}
 
 There is also an analogous version of Corollary \ref{TRANSFERMATRIX}.

\section{Proof of Theorem \ref{SPARSEBARTHM}} \label{section:SparseBar}

	
Let $\beta(x)$ be a non-negative increasing convex function such that $\log(\beta(x))$ is still convex. For example, we could take $\beta(x) = e^x.$ Moreover, suppose that $G(t) = 1/\beta^{-1}(1/t^2)$ defines a zero dimensional Hausdorff dimension function. We will consider the family of Hausdorff dimension functions $\mathcal{F} = \set{G(t)^\alpha: 0 < \alpha < \infty}.$ Define length scales inductively by $L_1 = 2, L_{n+1} = \beta(L_n)^n$ and define a potential
	\begin{equation}
	V(n) = \begin{cases} \beta(L_k)^\eta & n = L_k \\
	0 & n\not\in \set{L_k}_{k = 1}^\infty
	\end{cases}.
	\end{equation}
	
We will begin with an elementary lemma which will be useful:
\begin{mylemma}\label{CONVEXITY}
Let $\beta(x)$ be defined as above. Then $\beta^{-1}(x y) \leq \beta^{-1}(x) + \beta^{-1}(y)$ for every $x,y \geq 0.$ 
\end{mylemma}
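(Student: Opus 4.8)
The plan is to prove the subadditivity $\beta^{-1}(xy) \leq \beta^{-1}(x) + \beta^{-1}(y)$ by exploiting the hypothesis that $\log(\beta(x))$ is convex, which is the key structural assumption we have not yet used and which distinguishes this from a statement about arbitrary convex $\beta$. First I would set $u = \beta^{-1}(x)$ and $v = \beta^{-1}(y)$, so that $x = \beta(u)$ and $y = \beta(v)$, and the inequality to be proved becomes $\beta^{-1}(\beta(u)\beta(v)) \leq u + v$. Since $\beta^{-1}$ is increasing, this is equivalent to showing $\beta(u)\beta(v) \leq \beta(u+v)$; that is, it suffices to prove the \emph{supermultiplicativity} of $\beta$ on the relevant domain. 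Thus the whole lemma reduces to the single inequality
\begin{equation}
\beta(u)\beta(v) \leq \beta(u+v).
\end{equation}

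The supermultiplicativity is exactly where $\log \beta$ convex enters. Writing $\phi = \log \beta$, supermultiplicativity $\beta(u)\beta(v) \leq \beta(u+v)$ is equivalent to the superadditivity $\phi(u) + \phi(v) \leq \phi(u+v)$. I would establish this superadditivity from convexity of $\phi$ together with a normalization at the origin. Concretely, since $\phi$ is convex, for any $u, v \geq 0$ and the two points $0$ and $u+v$ we can write $u = \frac{u}{u+v}(u+v) + \frac{v}{u+v}\cdot 0$ and $v = \frac{v}{u+v}(u+v) + \frac{u}{u+v}\cdot 0$; applying convexity to each gives
\begin{align*}
\phi(u) &\leq \tfrac{u}{u+v}\phi(u+v) + \tfrac{v}{u+v}\phi(0),\\
\phi(v) &\leq \tfrac{v}{u+v}\phi(u+v) + \tfrac{u}{u+v}\phi(0),
\end{align*}
and adding yields $\phi(u)+\phi(v) \leq \phi(u+v) + \phi(0)$. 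Hence superadditivity holds provided $\phi(0) \leq 0$, i.e. $\beta(0) \leq 1$.

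The main obstacle I anticipate is precisely this normalization issue at the endpoint and the domain restriction. The argument as stated needs $\beta(0) \leq 1$ (so that $\phi(0) = \log\beta(0) \leq 0$) and needs $u, v$ in the domain where $\beta^{-1}$ makes sense, i.e. $x, y$ in the range of $\beta$. The statement as written claims the inequality "for every $x, y \geq 0$," so some care is required at small arguments where $\beta^{-1}$ may not be defined or where $\beta(0)$ could exceed $1$; I expect one must either invoke the running convention that $\beta$ is normalized (consistent with the example $\beta(x)=e^x$, for which $\beta(0)=1$ and the inequality is sharp) or interpret $\beta^{-1}$ suitably on the range of $\beta$. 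I would handle the boundary cases $x=0$ or $y=0$ separately using monotonicity and $\lim_{t\to 0^+}$ behavior. Once superadditivity of $\log\beta$ is in hand on the appropriate domain, the reduction in the first paragraph closes the proof immediately by applying the increasing function $\beta^{-1}$ to both sides of $\beta(u)\beta(v)\leq\beta(u+v)$.
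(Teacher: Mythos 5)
Your argument is the paper's proof read in the dual direction: the paper applies the increasing map $\beta^{-1}\circ\exp$ and invokes subadditivity of positive concave functions, while you apply $\log\beta$ and prove superadditivity of convex functions that are nonpositive at $0$ --- after the substitution $u=\beta^{-1}(x)$, $v=\beta^{-1}(y)$ both reduce to the identical inequality $\log\beta(u)+\log\beta(v)\le\log\beta(u+v)$, established by the same convexity splitting at the endpoints $0$ and $u+v$. The normalization $\beta(0)\le 1$ that you flag is genuinely needed (e.g.\ $\beta(x)=2e^{x}$ satisfies every stated hypothesis yet violates the conclusion, since $\beta^{-1}(xy)=\ln(xy/2)>\ln(xy/4)=\beta^{-1}(x)+\beta^{-1}(y)$), and the paper's proof silently assumes it inside the phrase ``positive concave functions are subadditive,'' so your extra care is warranted rather than a defect.
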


\begin{proof}
	Since $\beta$ is increasing, this inequality is trivial if $x = 0$ or $y = 0,$ so suppose $x,y > 0.$ Then we can write $x = e^{x_1}$ and $y = e^{y_1}$ for some $x_1 \ne -y_1 \in \R.$ Since $\log(\beta(x))$ is convex, the inverse, $\beta^{-1}(e^x),$ is concave. Define $f(x) = \beta^{-1}(e^x).$ Then we have:
	\begin{equation}
	\beta^{-1}(xy) = f(x_1 + y_1).
	\end{equation}
Since positive concave functions are subadditive, we have
	\begin{align}
	f(x_1 + x_2) &\leq f(x_1) + f(x_2)\\
	&= \beta^{-1}(x) + \beta^{-1}(y).
	\end{align}
Hence $\beta^{-1}(xy) \leq \beta^{-1}(x) + \beta^{-1}(y).$
\end{proof}

Now we may turn our attention to a proof of Theorem \ref{SPARSEBARTHM}:

\begin{proof}[Proof of Theorem \ref{SPARSEBARTHM}(i).] 
	It is well-known \cite{LanaLast} that the essential spectrum is contained in the interval $[-2,2],$ so it just remains to prove the dimension result. 
\end{proof}

\begin{proof}[Proof of Theorem \ref{SPARSEBARTHM}(ii).]
	For simplicity, we will prove the theorem for $\eta = 1.$ The proof of the general case is similar.
	Let $I = [a,b] \subset (-2,2)$ It suffices to prove the theorem for $\mu_\theta(I\cap\cdot).$  
	
	Let $\alpha(x)$ be defined such that $\beta(x) = x^{\alpha(x)}.$ 

	First, we will prove that $\dim_\mathcal{F}^+(\mu_\theta(I\cap\cdot)) \precsim \beta^{-1}(1/t^2)^{-1/(1-\epsilon)}$ for every $\epsilon > 0$ and every boundary phase $\theta.$ For every $E\in I, m > k \geq 0,$ let
		\begin{equation}
		\Phi_{k,m}(E) = T_m(E)T_{m-1}(E)\cdots T_{k+1}(E)
		\end{equation}
	where 
		\begin{equation}
		T_n(E) = \begin{pmatrix} E - V(n) & -1 \\ 1 & 0\end{pmatrix}.
		\end{equation}
	Since $\det(\Phi_{k,m}(E)) = 1,$ it follows that $\norm{\Phi_{k,m}^{-1}(E)} = \norm{\Phi_{k,m}(E)}^{-1}.$ For any $n\in \Z^+,$ if $L_n \leq k < m < L_{n+1},$ then $\Phi_{k,m}(E)$ is the same as the transfer matrix for the free Laplacian. In particular, there exists some constant $C_I,$ depending only on the interval $I,$ such that $1 \leq \norm{\Phi_{k,m}(E)} \leq C_I$ for any such $k,m$ and $E\in I.$ Moreover, for any $n\in \Z^+,$ we have
		\begin{equation}
		\Phi_{L_n-1,L_n}(E) = T_{L_n}(E) = \begin{pmatrix} E - V(L_n) & -1 \\ 1 & 0 \end{pmatrix},
		\end{equation}
	and so
		\begin{equation}
		\max\set{1,V(L_n)-2} \leq \norm{T_{L_n}(E)} \leq V(L_n) + 3.
		\end{equation}
	If we consider some $n\in \Z^+$ and $L_n \leq m < L_{n+1},$ then we have
		\begin{equation}
		\Phi_{0,m}(E) = \Phi_{L_n,m}T_{L_n}\Phi_{L_{n-1},L_n-1} T_{L_{n-1}} \cdots \Phi_{L_1,L_2-1}T_{L_1}\Phi_{0,L_1-1}.
		\end{equation}
	Thus we see that 
		\begin{align}
		\begin{split}
		\norm{\Phi_m(E)} &\leq C_I^{n+1}\prod_{k = 1}^n(V(L_k)+3) \\
		&\leq C_1^n\prod_{k = 1}^n L_k^{\alpha(L_k)} \\
		&\leq C_1^n \left(\prod_{k = 1}^n L_k\right)^{\alpha(L_n)},
		\end{split}
		\end{align}
	where $C_1$ is some constant. Similarly, for large $n,$ we also have
		\begin{align}
		\begin{split}
		\norm{\Phi_m(E)} &\geq \left(C_I^{n+1}\prod_{k = 1}^{n-1}(V(L_k)+3)\right)^{-1}(V(L_n)-2) \\
		&\geq C_2^{-n}\left(\left(\prod_{k = 1}^{n-1}L_k\right)^{-1}L_n\right)^{\alpha(L_n)},
		\end{split}
		\end{align}
	where $C_2$ is some constant. Since $L_{n+1} = \beta(L_n)^n,$ we see that for any $\epsilon > 0$ we can take $n$ sufficiently large so that 
		\begin{equation}
		\frac{L_n}{\beta^{-1}(L_n)} < \left(\prod_{k=1}^{n-1} L_k\right)^{-1}L_n < \prod_{k = 1}^n L_k < L_n\beta^{-1}(L_n).
		\end{equation}
	Similarly, for any $\epsilon > 0$ and $n$ large enough we have $C_1^n < \beta^{-1}(L_n)^\epsilon$ and $C_2^n < \beta^{-1}(L_n)^\epsilon.$ Hence, for any $L_n \leq m < L_{n + 1},$
		\begin{equation}\label{EXPBOUND}
		\left(\frac{L_n}{\beta^{-1}(L_n)}\right)^{\alpha(L_n)}\beta^{-1}(L_n)^{-\epsilon} \leq \norm{\Phi_m(E)}\leq \left(L_n\beta^{-1}(L_n)\right)^{\alpha(L_n)} \beta^{-1}(L_n)^{\epsilon}.
		\end{equation}
	Set $h(m) = \beta(m).$ By taking $m = L_n,$ we have
		\begin{align}\label{eq:tranmatbd}
		\begin{split}
		\frac{1}{h(m)}\sum_{k = 1}^m \norm{\Phi_k}^2 &\geq L_n^{-\alpha(L_n)}\left(\frac{L_n}{\beta^{-1}(L_n)}\right)^{2\alpha(L_n)}\beta^{-1}(L_n)^{-2\epsilon}\\
		&= L_n^{\alpha(L_n)}\beta^{-1}(L_n)^{-2\alpha(L_n) - 2\epsilon} \\
		\end{split}
		\end{align}
		By our assumptions on $\beta,$ \eqref{eq:tranmatbd} $\to \infty.$
	Corollary \ref{TRANSFERMATRIX} now yields $$\dim_\mathcal{F}^+(\mu_\theta(I\cap\cdot)) \precsim \frac{1}{h^{-1}(1/t^2)\ln(h^{-1}(1/t^2))^{1 + \delta}} = \frac{1}{\beta^{-1}(1/t^2)\ln(\beta^{-1}(1/t^2))^{1 + \delta}},$$ for every boundary phase $\theta.$ Hence $$\dim_\mathcal{F}^+(\mu_\theta(I\cap\cdot)) \precsim \frac{1}{\beta^{-1}(1/t^2)\ln(\beta^{-1}(1/t^2))^{1 + \delta}}$$ as desired.

	Now we will prove that $\dim_\mathcal{F}^-(\mu_\theta(I\cap \cdot)) \succsim \beta^{-1}(1/t^2)^{\delta-1}$ for every $\delta > 0$ and every boundary phase $\theta.$ By Theorem \ref{SUBHDIM} it suffices to prove that for every $E\in I$ and every $\delta > 0$
	\begin{equation}
	\liminf_{L\to \infty} (\beta^{-1}(\norm{u_1}_L^2 \norm{u_2}_L^2))^{\delta - 1} \norm{u_1}_L^2 > 0.
	\end{equation}
	
	First, note that we have $|u_1(m)|^2 + |u_2(m)|^2 \geq \norm{\Phi_m(E)}^{-2}.$  We see, by (\ref{EXPBOUND}) and our choice of $L_n,$ for sufficiently large $n$ and $L_n \leq m < L_{n + 1},$ 
		\begin{align}
		\norm{u_1}_m^2 &> \frac12\left((L_n - L_{n-1})\left(\frac{L_{n-1}}{\beta^{-1}(L_{n - 1})}\right)^{-2\alpha(L_{n-1})} + l \left(\frac{L_{n}}{\beta^{-1}(L_{n})}\right)^{-2\alpha(L_{n})}\right)\\
		&\geq \frac{L_n}{\beta^{-1}(L_n)} + l \left(\frac{L_{n}}{\beta^{-1}(L_{n})}\right)^{-2\alpha(L_{n})}\label{U1Lower},
		\end{align}
	where $l = m - L_n + 1.$
	Similarly, we have 
		\begin{align}
		\norm{u_2}_m^2 &< L_n\left(L_{n-1}\beta^{-1}(L_{n - 1})\right)^{2\alpha(L_{n-1})} + l \left(L_n\beta^{-1}(L_n)\right)^{2\alpha (L_n)}\\
		&\leq L_n\beta^{-1}(L_n) + l \left(L_n\beta^{-1}(L_n)\right)^{2\alpha(L_n)}. \label{U2Upper}
		\end{align}
	For simplicity, let 
	\begin{align*}
	A_n &= \frac{L_n}{\beta^{-1}(L_n)} \\
	B_n &= \left(\frac{L_n}{\beta^{-1}(L_n)}\right)^{-2\alpha(L_n)} \\
	C_n &= L_n\beta^{-1}(L_n) \\
	D_n &= \left(L_n\beta^{-1}(L_n)\right)^{2\alpha (L_n)}
	\end{align*}
	so that (\ref{U1Lower}) becomes $\norm{u_1}_m^2 > A_n + l B_n$ and similarly (\ref{U2Upper}) becomes $\norm{u_2}_m^2 < C_n + l D_n.$
	By combining (\ref{U1Lower}), (\ref{U2Upper}) and (\ref{U1Condition}), and letting $1 \leq l < L_{n + 1} - L_n + 1,$ we obtain
				
		\begin{align}\label{DEFf}
		\begin{split}
		(\beta^{-1}(\norm{u_1}_L^2 \norm{u_2}_L^2))^{\delta - 1} &\norm{u_1}_L^2 \geq (\beta^{-1}((C_n + l D_n)(m \ln(m)^2)))^{\delta - 1} (A_n + l B_n) \\
		&= \frac{A_n + l B_n}{(\beta^{-1}((C_n + l D_n)(l + L_n - 1) \ln(l + L_n - 1)^2))^{1 - \delta}} \\
		&\equiv F_{n,\delta}(l).
		\end{split}
		\end{align}
		
	We now need to analyze lower bounds for $F_{n,\delta}(l)$ for $1 \leq l < L_{n+1} - L_n +1,$ so consider the two cases:
	
	{\bf Case 1:} $lB_n \leq A_n$
		
	{\bf Case 2:} $lB_n \geq A_n$
	
	We can see that case 1 is equivalent to the case where $l \leq D_n C_n L_n^{-2\epsilon}$ and case 2 is equivalent to the case where $l \geq D_n C_n L_n^{-2\epsilon}.$
	
	Considering case 1, we have:
	\begin{align}
	F_{n,\delta}(l) &\geq \frac{A_n}{(\beta^{-1}((C_n + C_nL_n^{-2\epsilon} D_n^2)(l + L_n - 1) \ln(l + L_n - 1)^2))^{1 - \delta}}\\
	&\geq \frac{A_n}{(\beta^{-1}((2C_nL_n^{-2\epsilon} D_n^2)(2D_nC_nL_n^{-2\epsilon}) \ln(2D_nC_nL_n^{-2\epsilon})))^{1 - \delta}}\\
	&= \frac{A_n}{(\beta^{-1}(4D_n^3C_n^2L_n^{-4\epsilon}\ln(2D_nC_nL_n^{-2\epsilon}))^{1 - \delta}}\\
	&\geq \frac{A_n}{(\beta^{-1}(4D_n^{3+1/2}C_n^{2+1/2}L_n^{-4\epsilon - 1/2}))^{1 - \delta}}.
	\end{align}
	
	We can now appeal to Lemma \ref{CONVEXITY} and the fact that $D_n = \beta(L_n)^{2(1 + \epsilon)} L_n^\epsilon$ to obtain:
	\begin{equation}
	\frac{A_n}{\beta^{-1}(4D_n^{3+1/2}C_n^{2+1/2}L_n^{-4\epsilon - 1/2}))^{1 - \delta}} \geq \frac{L_n^{1 - \epsilon}}{K L_n^{1 - \delta}},
	\end{equation}
	for some constant $K > 0.$ Since we may take $\epsilon$ arbitrarily small by taking $n$ sufficiently large, we conclude that this limits to $\infty$ for every $\delta > 0.$ 
	
	Now considering case 2, we have:
	\begin{align}
	F_{n,\delta}(l) &\geq \frac{A_n + l B_n}{(\beta^{-1}(KlD_n(2l) \ln(2l)^2))^{1 - \delta}}\\
	&\geq \frac{A_n + l B_n}{(\beta^{-1}(K) + 2\beta^{-1}(l) + \beta^{-1}(D_n) + \beta^{-1}(\ln(2l)^2))^{1 - \delta}}\\
	&\to \infty.
	\end{align}
	Thus for every $\delta > 0,$ $F_{n\delta}(l) \to \infty$ as $n, l\to\infty,$ which completes our proof.
	
	\end{proof}
		

\begin{proof}[Proof of Theorem \ref{SPARSEBARTHM}(iii)]
Once again, we will consider $\eta = 1.$ We will show that for Lebesgue a.e. $E,$ and a.e. $\theta,$ the equation $Hu = Eu$ has solutions with appropriate decay properties. 

Fix $\theta_0 = 0,$ and let $H = H_{\theta_0}.$ For each $m \in \Z^+,$ let $G_m$ be the operator on $l^2(\Z^+)$ given by 
\begin{equation}
\INNERPROD{\delta_i}{G_m\delta_j} = \delta_{i,m - 1}\delta_{j,m} + \delta_{i-1,m}\delta_{j, m-1}.
\end{equation}
For each $k \in Z^+,$ define new operators $H_k' = H - G_{L_k}$ and $\hat H_k = H - G_{L_k} - G_{L_k + 1},$ and for every $z \in \C,$ define the resolvent operators $G(z) = (H - z)^{-1}, G_k'(z) = (H_k' - z)^{-1},$ and $\hat G_k(z) = (\hat H_k - z)^{-1}.$ Moreover, for $i,j \in \Z^+,$ let the corresponding Green's functions be given by
\begin{align}
G(i,j,z) &= \INNERPROD{\delta_i}{G(z)\delta_j}\\
G_k'(i,j,z) &= \INNERPROD{\delta_i}{G_k'(z)\delta_j}\\
\hat G_k(i,j,z) &= \INNERPROD{\delta_i}{\hat G_k(z)\delta_j}.
\end{align}

Now considering some $n > L_k,$ we can use the resolvent identity $G(z) = G'_k(z) - G(z) G_{L_k} G'_k(z)$ to obtain
\begin{equation}
G(1,n,z) = - G(1, L_k - 1, z)G_k'(L_k, n, z).\label{eq:Greenfcn}
\end{equation}
A similar computation with $G'_k(z),$ yields the identity 
$$G'_k(z) = \hat G_k(z) - \hat G_k(z) G_{Lk + 1} G_k'(z) ,$$ 
so we have
\begin{equation}
G_k'(Lk, n ,z) = -\hat G_k(L_k, L_k, z) G_k'(L_k + 1, n, z) = \frac {-1}{V(L_k) - z} G_k'(L_k + 1, n , z).\label{eq:VarGreenfcn}
\end{equation}
Together, \eqref{eq:Greenfcn} and \eqref{eq:VarGreenfcn} yield
\begin{equation}
G(1,n,z) = G(1, L_k - 1, z) G_k'(L_k + 1, n , z) \frac 1 {V(L_k) - z}.
\end{equation}
Whenever $z = E + i\epsilon, \epsilon > 0,$ we see that $G(i,j,z)$ and $G'_k(i , j , z)$ have the form \eqref{eq:BORELTRANSFORM} and thus are Borel transforms of signed measures. We know (see e.g. \cite{SimonDim} for details) that Borel transforms have finite non-tangential limits a.e. on the real axis: $|G(i,j,E)| = |G(i,j, E + i0)| < + \infty$ and $|G'_k(i,j,E)| = |G'_k(i,j,E + i0)| < + \infty.$ 

Let us also recall Boole's equality for Borel transforms of singular measures: if $F(z)$ is the Borel transform of a singular measure on $\R$ such that $\mu(\R) = 1,$ then for any $\lambda > 0,$ we have $|\set{E: f(E) > \lambda}| = 2/\lambda.$ Since we have already shown that the spectral measures of $H$ for any vector $\delta_i$ are singular (Theorem \ref{SPARSEBARTHM} (ii) above), we conclude that
\begin{align}
\left|\set{E: |G(i,j,E)| > \lambda}\right| &\leq 4/\lambda,\\
\left|\set{E: |G_k'(i,j,E)| > \lambda}\right| &\leq 4/\lambda.
\end{align}
From this, we deduce that for any $j \geq 1, \gamma > 0,$ $k > 1, n > L_k$ and $E\in (-2,2),$ 
\begin{equation}
\left|\set{E: |G(1,n,E)| > \frac{f^j(L_k)^\gamma}{V(L_k) - 2}}\right| \leq \frac 8 {f^j(L_k)^{\gamma/2}},
\end{equation}
where $f(x) = \beta^{-1}(x)$ and $f^j(x)$ is the $j$-fold composition of $f$ with itself. Let us now fix $j \geq 1.$ By our choice of $L_k,$ $\sum_{k = 2}^\infty (f^j(L_k))^{-\gamma/2} < \infty$ for every $j \geq 1$ and $\gamma > 0.$ By the Borel-Cantelli lemma, for Lebesgue a.e. $E\in (-2,2),$ there exists a $K(E, j)$ such that for any $k > K(E, j)$ and $n = L_k + 1$ or $L_k + 2,$
\begin{equation}
|G(1,n,e)| \leq \frac{f^j(L_k)^\gamma}{V(L_k) - 2}.\label{eq:GreenBd}
\end{equation}

Now, if $E$ is such that the sequence $u_n = \set{G(1,n,E)}_{n = 1}^\infty$ exists, it necessarily solves the equation $Hu = Eu$ for $n > 2.$ Thus for any $L_k + 2 < n \leq L_{k + 1},$ we can recover $G(1,n,E)$ using $G(1,L_k + 1, E)$ and $G(1, L_k + 2, E)$ and the action of the free transfer matrix:
\begin{equation}
\begin{pmatrix}
G(1, n + 1, E)\\ 
G(1, n , E)
\end{pmatrix}
=
\Phi_{L_k + 2, n}(E) 
\begin{pmatrix}
G(1, L_k + 2, E) \\
G(1, L_k + 1, E)
\end{pmatrix}.
\end{equation}
Since we know that the free transfer matrix is bounded, we have $\norm{\Phi_{L_k + 2, n (E)}} \leq C(E)$ for $L_k + 2 < n < L_{k + 1}$ and $E\in (-2,2).$ Hence for $L_k < n \leq L_{k + 1}, $ \eqref{eq:GreenBd} holds for the same full measure set of $E$ as above and $k > K(E).$ 

It now follows that for Lebesgue a.e. $E \in (-2,2),$ there exists a solution $v$ of $Hu = Eu$ with $|v(0)|^2 + |v(1)|^2 = 1$ and a constant $C = C(E),$ such that for sufficiently large $k$ and $n > L_k,$
\begin{equation}
|v(n)| < C \frac{f^j(L_k)^\gamma}{V(L_k)} = C f^j(L_k)^\gamma \beta(L_k)^{-1} = Cf^j(L_k) L_{k + 1}^{-1/k}.
\end{equation}
Moreover, since there can be at most one subordinate solution of $Hu = Eu$ with the normalization property $|v(0)|^2 + |v(1)|^2 = 1$ which is decaying, $v$ must be the unique subordinate solution of $Hu = Eu.$ We also have, for $m \in \Z^+, L_n < m \leq L_{n + 1}$ with $n$ sufficiently large,
\begin{align}
\norm{v}_m^2 &= \sum_{j = 1}^m |v(j)|^2 \\
&= \sum_{j = 1}^{L_{k(E)}} |v(j)|^2 + \sum_{j = L_{k(E)} + 1}^m |v(j)|^2\\
&\leq C(E, v) + C\sum_{i = k(E)}^{n} L_{i + 1} f^j(L_i)^{2\gamma} L_{i + 1}^{-2/i}\\
&\leq C(E,v) + C f^j(L_n)^{2\gamma} L_{n + 1}.
\end{align}

Now we return to considering $H_\theta,$ where $\theta$ can vary. Recall that we can view $H_\theta$ as $H$ along with an appropriate rank-one perturbation at the origin. By the theory of rank-one perturbations (again, we refer readers to \cite{SimonDim} for full details), it is known that for any set $A\subset \R$ with $|A| = 0,$ we have $\mu(A) = 0$ for Lebesgue a.e. boundary phase $\theta.$ Since the set of energies for with the solution $v$ above does not exist is a Lebesgue null set, we can conclude that for a.e. boundary phase $\theta,$ the associate spectral measure $\mu$ is supported on the set of $E$ where the solution $v$ above exists. Furthermore, since $\mu$ must also be supported on the set of energies for which $u_1$ is subordinate, it follows that for a.e. $\theta$ and a.e. $E$ with respect to $\mu,$ $u_1$ must coincide with $v$ above.

For this $u_1$ and $m = L_n + L_n L_{n + 1}^{2/n},$ we have
\begin{align}
\norm{u_1}_m^2 &= \sum_{j = 1}^{L_n} |u_1(j)|^2 + \sum_{L_n + 1}^m |u_1(j)|^2\\
&\leq C(E,v) + C f^j(L_{n-1})^{2\gamma} L_{n} + C (m - L_n) f^j(L_n)^{2\gamma} L_{n + 1}^{-2/n} \\
& = C(E,v) + C f^j(L_{n-1})^{2\gamma} L_{n} + C f^j(L_n) L_n \\
&\leq C( 1 + f^j(L_n)^{2\gamma} L_n).
\end{align}

On the other hand, a similar analysis yields
\begin{equation}
\norm{u_2}_m^2 \geq L_n \beta(L_n)^2 \beta^{-1}(L_n)^{-1}
\end{equation}
for $m = L_n + L_n L_{n + 1}^{2/n}.$

Thus, if $g_k(x) = \beta^{-1}(1/x^2)f^j(x),$ and $m = L_n + L_n L_{n + 1}^{2/n},$ then
\begin{align}
g_k( \norm{u_1}_m^{-1} \norm{u_2}_m^{-1}) \norm{u_1}_m^2 &\leq \frac{C( 1 + f^j(L_n)^{2\gamma} L_n)}{\beta^{-1}(L_n \beta(L_n)^2 \beta^{-1}(L_n)^{-1})f^k(L_n \beta(L_n)^2 \beta^{-1}(L_n)^{-1})}\\
&\leq \frac{C( 1 + f^j(L_n)^{2\gamma} L_n)}{2L_n f^{k-1}(L_n)}.
\end{align}
Since this limits to 0 whenever $k \leq j,$ and since $j \geq 1$ was arbitrary, we conclude that $\dim^+_\mathcal{F}(\mu(A\cap \cdot)) \precsim \frac{1}{\beta^{-1}(1/t^2)}.$ 
\end{proof}

\section{Rank one perturbations: general results} \label{section:SULE}
We will now consider a probability measure $\mu$ on $\R$ and the self-adjoint operator $A: L^2(d\mu) \to L^2(d\mu)$ given by multiplication by $x.$ Let $\varphi$ be any cyclic unit vector in $L^2(d\mu).$ We define the rank one perturbation of $A$ by $\varphi$ as 
	\begin{equation} A_\lambda = A + \lambda \INNERPROD{\varphi}{\cdot}\varphi, \quad \lambda \in \R.\end{equation}
We will let $\mu_\lambda$ denote the spectral measure associated to $A_\lambda$ and $\varphi.$ Let $F_\lambda$ denote the Borel transform of $\mu_\lambda,$ and write $F_0 = F.$ Then
	\begin{align}
	F_\lambda(z) &= \frac{F(z)}{1 + \lambda F(z)} ,\\
	\IM F_\lambda(z) &= \frac{\IM F(z)}{|1 + \lambda F(z)|^2} ,\\
	d\mu_\lambda(x) &= \lim_{x\to\epsilon^+} \frac{1}{\pi} \IM F_\lambda(x+i\epsilon)dx,\\
	\mu_{\lambda,\text{sing}} &\text{ is supported by } \set{x: F(x+ i0) = -1/\lambda}.
	\end{align}

In addition to the Borel transform, we define 
	\begin{equation} G(x) = \int \frac{d\mu(y)}{(x-y)^2}. \end{equation}
It is well know that 
	\begin{equation} \set{x: G(x) < \infty, F(x+i0) = -\lambda^{-1}} = \text{set of eigenvalues of } A_\lambda. \end{equation}

\begin{mylemma} \label{IntLem} Let $\mathcal{F}$ be a family of comparable Hausdorff measure functions and let $f \in \mathcal{F}.$ Suppose that for a family of intervals $A_n,$ we have 
	$$|A_n| \leq f^{-1}(b_n)$$ 
where $b_n \geq 0$ is a summable sequence of real numbers.
Then $\dim_\mathcal{F}\left(\limsup A_n\right) \precsim f.$
\end{mylemma}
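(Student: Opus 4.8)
The plan is to show that $\limsup A_n$ is covered, for each fixed threshold, by the tail of the family $\set{A_n}$, and that the gauge sum over this tail is controlled by the tail of the summable series $\sum b_n$. The key observation is that membership in $\limsup A_n = \bigcap_{N} \bigcup_{n \geq N} A_n$ means that for every $N$, the collection $\set{A_n}_{n \geq N}$ is a cover of $\limsup A_n$. This immediately gives us a family of covers indexed by $N$, which is exactly what we need to estimate a Hausdorff measure.

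First I would fix the gauge function $f = \rho_{\alpha_0} \in \mathcal{F}$ from the hypothesis and estimate $\mu^f(\limsup A_n)$ directly from the definition of the $f$-dimensional Hausdorff measure. Using the cover $\set{A_n}_{n \geq N}$, and the hypothesis $|A_n| \leq f^{-1}(b_n)$, monotonicity of $f$ gives $f(|A_n|) \leq f(f^{-1}(b_n)) = b_n$. Hence
\begin{equation}
\sum_{n \geq N} f(|A_n|) \leq \sum_{n \geq N} b_n,
\end{equation}
and since $\sum b_n < \infty$, the right-hand side is the tail of a convergent series, so it tends to $0$ as $N \to \infty$. One technical point I would address is the mesh requirement in the definition of $\mu^f$: the covers must be $\delta$-covers, i.e. each $|A_n| < \delta$. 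Since $b_n \to 0$ (as the series converges) and $f^{-1}$ is continuous with $f^{-1}(0^+) = 0$, we have $|A_n| \leq f^{-1}(b_n) \to 0$, so for $N$ large the tail cover $\set{A_n}_{n \geq N}$ is automatically a $\delta$-cover for any prescribed $\delta$. Taking $N \to \infty$ (equivalently $\delta \to 0$) yields $\mu^f(\limsup A_n) = 0$.

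Having shown $\mu^{\rho_{\alpha_0}}(\limsup A_n) = 0$ for the specific $f = \rho_{\alpha_0}$, I would then translate this into the statement $\dim_\mathcal{F}(\limsup A_n) \precsim f$. By the definition of $\dim_\mathcal{F}(S)$, we have $\dim_\mathcal{F}(S) = \rho_{\alpha'}$ where $\alpha' = \sup\set{\alpha \in I : \mu^{\rho_\alpha}(S) = \infty}$. Since $\mu^{\rho_{\alpha_0}}(\limsup A_n) = 0 \neq \infty$, the index $\alpha_0$ is not in the set whose supremum defines $\alpha'$; moreover, because the family is totally ordered and Hausdorff measure behaves monotonically along the family (if $\rho_\beta \prec \rho_{\alpha_0}$ then finiteness of $\mu^{\rho_{\alpha_0}}$ forces $\mu^{\rho_\beta}$ to remain finite as well), the supremum $\alpha'$ satisfies $\alpha' \leq \alpha_0$. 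This gives $\dim_\mathcal{F}(\limsup A_n) = \rho_{\alpha'} \precsim \rho_{\alpha_0} = f$, which is the desired conclusion.

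The main obstacle I anticipate is not in the Hausdorff measure estimate itself, which is a clean Borel–Cantelli-style argument, but rather in the bookkeeping that connects $\mu^f = 0$ back to the $\dim_\mathcal{F}$ ordering. The subtlety is ensuring that the monotonicity "$\mu^{\rho_{\alpha_0}} = 0$ implies $\mu^{\rho_\beta}$ finite for all $\rho_\beta \precsim \rho_{\alpha_0}$" is correctly justified from the ordering $\prec$: since $\rho_\beta \prec \rho_{\alpha_0}$ means $\lim_{t \to 0^+} \rho_\beta(t)/\rho_{\alpha_0}(t) = \infty$, smaller dimension indices correspond to larger gauge functions, so zero measure at $\alpha_0$ propagates to zero (hence non-infinite) measure at all larger indices $\beta > \alpha_0$ in $I$, pinning $\alpha'$ below $\alpha_0$. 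I would state this monotonicity explicitly, as it is the one place where the structure of the comparable family (Definition \ref{CompFam}) genuinely enters, and everything else reduces to the elementary fact that a convergent series has vanishing tails.
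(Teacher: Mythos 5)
Your proof is correct and follows essentially the same route as the paper: both use the tail collections $\set{A_n}_{n \geq N}$ as $\delta$-covers together with the vanishing tails of $\sum b_n$, the only difference being that you establish $\mu^f(\limsup A_n) = 0$ at $f$ itself and then propagate to all $g$ with $f \prec g$ by monotonicity of Hausdorff measures along the ordered family, whereas the paper reruns the tail estimate at each such $g$ directly via $g \leq f$ and $g^{-1} \geq f^{-1}$. One harmless notational slip: in your monotonicity parenthetical you write $\rho_\beta \prec \rho_{\alpha_0}$ where, under the paper's convention that $\rho_\alpha \prec \rho_\beta$ iff $\alpha < \beta$, you mean $\rho_{\alpha_0} \prec \rho_\beta$ (i.e.\ $\beta > \alpha_0$); your final paragraph states the correct direction, so the argument stands.
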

\begin{proof}
	Fix $g \in \mathcal{F}$ such that $f \prec g.$ That is, $\lim f(t)/g(t) = \infty,$ so $g(t) \leq f(t).$ Thus, $g^{-1}(t) \geq f^{-1}(t).$ Since $f$ is a Hausdorff dimension function, $f^{-1}(t) \to 0$ as $t\to 0.$ Hence $|A_n| \to 0,$ so given $\delta,$ we can choose $N_\delta$ so that $|A_n| \leq \delta$ for $n \geq N_\delta.$ Then for $m \geq N_\delta,$ $\bigcup_{n = m}^\infty A_n$ is a $\delta$-cover of $\limsup A_n.$ Thus,
		\begin{equation}
		\sum_{n = m}^\infty g(|A_n|) \leq \sum_{n = m}^\infty g(f^{-1}(b_n)) \leq \sum_{n = 1}^\infty g(g^{-1}(b_n)) < \infty.
		\end{equation}
	
	Thus, as $m \to \infty,$ we have $\sum_{n = m}^\infty g(|A_n|) \to 0.$
	We conclude that
		\begin{equation} 
		\lim_{\delta \to 0} \inf_{\delta\text{-covers}} \set{\sum_{i = 1}^\infty g(|F_i|)} = 0.
		\end{equation}
	Thus $\dim_\mathcal{F}(\limsup A_n) \prec g$ for every $f \prec g,$ so $\dim_\mathcal{F}(\limsup A_n) \precsim f.$
\end{proof}

\begin{mythm} \label{MyDelRio} 
Let $f(t)$ be a zero dimensional Hausdorff dimension function, let $\mathcal{F} = \set{f(t^\alpha): \alpha > 0},$ and suppose $d\mu(E) = \sum_{n = 1}^\infty a_n d\delta_{E_n}(E)$ where $a_n$ obeys the condition that 
	$$|a_n| \leq f^{-1}(b_n),$$ where $b_n$ is a summable sequence of positive real numbers. Then for every $\lambda$ we have $\dim_{\mathcal{F}}(d\mu_\lambda) \precsim f(t^2).$
\end{mythm}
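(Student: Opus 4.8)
\emph{Strategy.} The plan is to prove, for every $\lambda$, that $\mu_\lambda$ is $f(t^\alpha)$-singular for \emph{each} $\alpha>2$. Writing $\rho_\alpha=f(t^\alpha)$, we have $\dim_\mathcal{F}^+(\mu_\lambda)=\rho_{\beta'}$ with $\beta'=\inf\set{\alpha:\mu_\lambda\text{ is }\rho_\alpha\text{-singular}}$, so establishing $\rho_\alpha$-singularity for all $\alpha>2$ forces $\beta'\le 2$ and hence $\dim_\mathcal{F}^+(\mu_\lambda)\precsim f(t^2)$, which is the assertion. To exhibit, for a fixed $\alpha>2$, a witnessing set $G$ (with $\mu_\lambda(\R\setminus G)=0$ and $\mu^{\rho_\alpha}(G)=0$), I would first decompose $\mu_\lambda=\mu_\lambda^{pp}+\mu_\lambda^{sc}$. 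There is no absolutely continuous part, since $\mu$ is pure point and the a.c.\ part is invariant under a rank one perturbation. By the Aronszajn--Donoghue description recorded in this section, $\mu_\lambda^{pp}$ is carried by the set of eigenvalues $\set{x:F(x+i0)=-1/\lambda,\ G(x)<\infty}$, which is countable and therefore has $\mu^{\rho}$-measure $0$ for every gauge $\rho$; it contributes nothing to the dimension. The whole burden falls on $\mu_\lambda^{sc}$, which is carried by $\set{x:G(x)=\infty}$. Crucially, this last set does not depend on $\lambda$, which is exactly what will make the bound uniform in $\lambda$.

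\emph{The covering.} The geometric heart of the argument is to cover $\set{G=\infty}$ by shrinking intervals centered at the original points $E_n$. Fix $\alpha>2$ and choose exponents $2<\alpha_0<\alpha_1<\alpha$, and set $A_n=(E_n-a_n^{1/\alpha_0},\,E_n+a_n^{1/\alpha_0})$. The key implication is that if $x\notin\limsup A_n$ then $|x-E_n|\ge a_n^{1/\alpha_0}$ for all large $n$, so each tail term of $G(x)=\sum_n a_n/(E_n-x)^2$ is at most $a_n^{1-2/\alpha_0}$; since $1-2/\alpha_0>0$, the zero-dimensionality of $f$ (explained below) gives $\sum_n a_n^{1-2/\alpha_0}<\infty$ and hence $G(x)<\infty$. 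Contrapositively $\set{G=\infty}\subseteq\limsup A_n$, so $\mu_\lambda^{sc}(\R\setminus\limsup A_n)=0$. To control the size of $\limsup A_n$ I would apply Lemma \ref{IntLem} with the gauge $g=f(t^{\alpha_1})\in\mathcal{F}$ and weights $b_n:=g(|A_n|)=f(2^{\alpha_1}a_n^{\alpha_1/\alpha_0})$: since $\alpha_1/\alpha_0>1$ and $a_n\to 0$ we have $2^{\alpha_1}a_n^{\alpha_1/\alpha_0}\le a_n$ for large $n$, so monotonicity of $f$ yields $b_n\le f(a_n)\le(\text{the original summable weights})$, making $b_n$ summable. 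Lemma \ref{IntLem} then gives $\dim_\mathcal{F}(\limsup A_n)\precsim f(t^{\alpha_1})$, and because $\alpha_1<\alpha$ this forces $\mu^{\rho_\alpha}(\limsup A_n)=0$. Taking $G=\set{\text{eigenvalues}}\cup\limsup A_n$ completes the proof that $\mu_\lambda$ is $\rho_\alpha$-singular, and letting $\alpha\downarrow 2$ finishes the theorem.

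\emph{Main obstacle.} The whole argument hinges on two summability facts, both consequences of $f$ being a zero-dimensional Hausdorff dimension function, and reconciling them is where I expect the real work to lie. Because $f\prec t^\gamma$ for every $\gamma>0$, we have $f(t)\ge t^\gamma$ for all small $t$; applying this with $\gamma=1-2/\alpha_0>0$ gives $a_n^{1-2/\alpha_0}\le f(a_n)\le b_n$ eventually, so $\sum_n a_n^{1-2/\alpha_0}<\infty$, which is precisely what the covering of $\set{G=\infty}$ requires. The delicate point is the interplay of exponents: covering $\set{G=\infty}$ wants the radii $a_n^{1/\alpha_0}$ as \emph{large} as possible (pushing $\alpha_0$ toward $2$), while the dimension estimate wants them \emph{small}, and it is exactly the strict chain $2<\alpha_0<\alpha_1<\alpha$ together with the rapid decay of the $a_n$ forced by $\sum_n f(a_n)<\infty$ that lets both demands be met simultaneously for every target $\alpha>2$. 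The heuristic scale underlying everything is $r_n\approx\sqrt{a_n}$ (the radius at which the single-pole contribution to $G$ becomes order one), and the squaring in this scale is the origin of the exponent $2$ in the final bound $f(t^2)$.
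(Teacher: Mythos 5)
Your proposal is correct and follows essentially the same route as the paper's proof: cover the Aronszajn--Donoghue support $\set{x: G(x)=\infty}$ by intervals about the $E_n$ of radius roughly $a_n^{1/2-\epsilon}$ (your $a_n^{1/\alpha_0}$ with $\alpha_0>2$ is the same scale), verify $G(x)<\infty$ off $\limsup A_n$ by summing the tail with the zero-dimensionality of $f$, apply Lemma \ref{IntLem} to bound $\dim_\mathcal{F}(\limsup A_n)$, and let the auxiliary exponent tend to the critical value $2$. The only differences are bookkeeping: you use the chain $2<\alpha_0<\alpha_1<\alpha$ where the paper uses a single $\epsilon$, and you obtain the tail summability via $a_n^{\gamma}\le f(a_n)\le b_n$ rather than the paper's $f^{-1}(b_n)^{2\epsilon}\le b_n^{2\epsilon\alpha}$ --- both are valid readings of the same hypothesis.
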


\begin{proof}
	Let $G(x)$ be defined as above and let $S = \set{x: G(x) = \infty, x\not\in \set{E_i}_{i = 1}^\infty}.$ Then the Aronszajn-Donoghue theory \cite{SIMON} says that for any $\lambda \ne 0,$ $d\mu_\lambda^{sc}$ is supported by $S,$ Thus, the spectral measure $d\mu_\lambda$ is supported by $S\cup \set{\text{eigenvalues of} A_\lambda}.$ Since the set of eigenvalues is countable, it will not contribute to the dimension of $\supp(d\mu_\lambda),$ so it suffices to prove that $S$ has $\dim_\mathcal{G}(S) \precsim f(t^2).$ 

	Fix $\epsilon > 0,$ let $c_{n,\epsilon} = \frac 12 |a_n|^{1/2-\epsilon}$ and let $A_n^\epsilon = [E_n - c_{n,\epsilon}, E_n + c_{n,\epsilon}].$ Then 
		$$|A_n^\epsilon| = 2c_{n,\epsilon} = |a_n|^{1/2-\epsilon} \leq f^{-1}(b_n)^{1/2-\epsilon}.$$ 
	Now by Lemma \ref{IntLem}, for every $\epsilon > 0$ and every $f(t^{2/(1 - 2\epsilon)}) \prec g(t)$ we have $\mu^{g}(\limsup A_n^\epsilon) = 0.$

	


	Now it remains to show that $S \subset \limsup A_n^\epsilon$ for every $\epsilon.$ That is, it remains to show that if $x\not\in \limsup A_n^\epsilon$ and $x \not\in\set{E_n}_{n = 1}^\infty,$ then $G(x) < \infty.$ If $x \not\in \limsup A_n^\epsilon$ then for some $N_0,$ we must have $x\not\in \bigcup_{n = N_o}^\infty A_n^\epsilon.$ Now observe that 
		\begin{align}
		G(x) &= \sum_{n = 1}^\infty \frac{a_n}{|x - E_n|^2} \\
		&= \sum_{n = 1}^{N_0} \frac{a_n}{|x - E_n|^2} + \sum_{n = N_0}^{N\infty} \frac{a_n}{|x - E_n|^2} \\
		&\leq C + \sum_{n = N_0}^{\infty} \frac{a_n}{c_{n,\epsilon}^2} \\
		&\leq C + \sum_{n = N_0}^\infty 2 a_n^{2\epsilon}\\
		&\leq C + \sum_{n = N_0}^\infty 2f^{-1}(b_n)^{2\epsilon}.
		\end{align}
	The first sum is bounded because $x\not\in \set{E_n}_{n = 1}^\infty.$ Since $f$ is a zero dimensional Hausdorff dimension function, $t^\alpha \prec f^{-1}(t)$ for every $\alpha > 0,$ so $f^{-1}(b_n)^{2\epsilon}$ is summable, so we have $G(x) < \infty.$ Thus $S \subset \limsup A_n^\epsilon$ for every $\epsilon.$ Thus $\dim_\mathcal{F}(S) \precsim f(t^{2/(1 - 2\epsilon)})$ for every $\epsilon > 0.$ By our definition of $\mathcal{F},$ it follows that $\dim_\mathcal{F}(S) \precsim f(t^2).$
\end{proof}

By considering the larger family $\mathcal{G} = \set{f(t^\alpha)^\beta: \alpha,\beta} > 0,$ we can actually take $b_n = 1/n$ in the above theorem and conclude with the same result.

\begin{mydef} Let $H$ be a self-adjoint operator on $l^2(\Z^\nu).$ We say that $H$ has semi-uniformly localized eigenfunctions (SULE) if and only if $H$ has a complete set $\set{\varphi_n}_{n = 1}^\infty$ of orthnormal eigenfunctions, there is $\alpha > 0$ and $m_n \in \Z^\nu, n = 1,...,$ and for each $\delta > 0,$ a $C_\delta$ so that 
\begin{equation} |\varphi_n(m)| \leq C_\delta e^{\delta |m_n| - \alpha|m - m_n|} \end{equation}
for all $m\in \Z^\nu$ and $n = 1,2,....$
\end{mydef}

\begin{mylemma}[\cite{DelRioJitLastSim}] \label{DelRio3} Suppose that $H$ has SULE. Then there are $C$ and $D$ and a labeling of eigenfunctions so that \begin{equation} |\varphi_n(0)| \leq C\exp(-Dn^{1/\nu}).\end{equation}
\end{mylemma}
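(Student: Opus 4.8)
The plan is to extract from the SULE bound a counting estimate on the localization centers $m_n$ and then feed it into a completeness identity. First I would record the elementary fact that, since $\set{\varphi_n}_{n=1}^\infty$ is a complete orthonormal system in $l^2(\Z^\nu)$, expanding each standard basis vector $\delta_m$ in this basis gives
$$\sum_{n=1}^\infty |\varphi_n(m)|^2 = 1 \qquad \text{for every } m \in \Z^\nu.$$
This is the identity that lets orthonormality interact with the pointwise decay supplied by SULE.

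Next, fix once and for all a value $\delta \in (0,\alpha)$, say $\delta = \alpha/2$, so that $C_\delta$ is a fixed constant. The central step is to bound $N(R) := \#\set{n : |m_n| \le R}$ by a constant times $R^\nu$. To do this I would show that the bulk of the mass of $\varphi_n$ sits in a ball around $m_n$: using the SULE bound together with a geometric tail estimate $\sum_{|k| > T} e^{-2\alpha|k|} \le C e^{-\alpha T}$, one gets, for every $n$ with $|m_n| \le R$,
$$\sum_{|m - m_n| > T} |\varphi_n(m)|^2 \le C_\delta^2\, e^{2\delta R}\, C\, e^{-\alpha T}.$$
Choosing $T = \tfrac{2\delta}{\alpha} R + C'$ makes the right-hand side at most $1/2$, so at least half of the mass of each such $\varphi_n$ lies in the ball $|m| \le R + T$. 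Summing this over all $n$ with $|m_n| \le R$ and interchanging the order of summation, the completeness identity bounds the total by the volume $|B_{R+T}| \le C_\nu (R+T)^\nu$. Since $R + T = (1 + 2\delta/\alpha)R + O(1)$ is still of order $R$, this yields $\tfrac12 N(R) \le C_\nu (R+T)^\nu \le C'' R^\nu$, i.e. $N(R) \le C R^\nu$.

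With the counting bound in hand, I would relabel the eigenfunctions in order of nondecreasing $|m_n|$. Then at least $n$ centers lie in the ball of radius $|m_n|$, so $n \le N(|m_n|) \le C|m_n|^\nu$, which rearranges to $|m_n| \ge c\, n^{1/\nu}$ for a constant $c > 0$. Finally, evaluating the SULE bound at the origin $m = 0$ with our fixed $\delta$ gives
$$|\varphi_n(0)| \le C_\delta\, e^{\delta|m_n| - \alpha|m_n|} = C_\delta\, e^{-(\alpha-\delta)|m_n|} \le C_\delta\, e^{-(\alpha-\delta) c\, n^{1/\nu}},$$
which is exactly the claimed estimate with $C = C_\delta$ and $D = (\alpha - \delta)c$.

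I expect the counting estimate $N(R) \lesssim R^\nu$ to be the main obstacle, and within it the delicate point is that the auxiliary radius $T$ must scale \emph{linearly} with $R$: because $\delta > 0$ is fixed, the factor $e^{2\delta|m_n|}$ in the SULE bound grows with $R$, so $T$ has to grow proportionally to absorb it, and one must verify that the enlarged radius $R+T$ remains $O(R)$ so that the volume bound stays of order $R^\nu$. Everything else is bookkeeping with the fixed constants $\alpha$, $\delta$, $\nu$ and the elementary geometric sums.
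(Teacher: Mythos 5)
Your argument is correct. The paper itself gives no proof of this lemma --- it is imported directly from \cite{DelRioJitLastSim} --- and your proposal essentially reconstructs the argument from that reference: the heart of the matter is the counting bound $\#\set{n : |m_n| \le R} \le C R^\nu$, obtained by showing each eigenfunction keeps at least half its $\ell^2$ mass within distance $T \sim (2\delta/\alpha)R$ of its center and then invoking completeness $\sum_n |\varphi_n(m)|^2 = 1$ over the enlarged ball; relabeling by nondecreasing $|m_n|$ and evaluating the SULE bound at the origin with a fixed $\delta < \alpha$ then gives the stated estimate. You have also correctly identified the one genuinely delicate point, namely that $T$ must grow linearly in $R$ to absorb the $e^{2\delta|m_n|}$ factor while keeping $R+T = O(R)$, so nothing is missing.
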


\begin{mythm} \label{MySULERes}
Suppose $H$ has SULE and let $\mathcal{F} = \set{\ln(1/t)^{-\alpha}: 0 < \alpha < \infty}.$ Let $H_\lambda = H + \lambda\INNERPROD{\delta_0}{\cdot}\delta_0.$ Let $d\mu$ be the spectral measure of $H$ associated to $\delta_0,$ and let $d\mu_\lambda$ be the corresponding spectral measures for $H_\lambda.$ Then for every $\lambda,$ $\dim_\mathcal{F}(\supp(d\mu_\lambda)) \precsim \ln(1/t)^{-\nu}.$
\end{mythm}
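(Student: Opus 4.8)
The plan is to reduce the statement to Theorem \ref{MyDelRio} and the remark following it, by exhibiting the spectral measure of $H$ as a pure point measure whose weights decay at exactly the rate governed by a logarithmic gauge function.

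First I would record the structure of the unperturbed measure. Since $H$ has SULE it possesses a complete orthonormal basis of eigenfunctions $\set{\varphi_n}$ with eigenvalues $E_n$, so the spectral measure of $H$ for $\delta_0$ is pure point, $d\mu = \sum_{n=1}^\infty a_n\, d\delta_{E_n}$ with weights $a_n = |\INNERPROD{\delta_0}{\varphi_n}|^2 = |\varphi_n(0)|^2$. Invoking Lemma \ref{DelRio3}, after relabeling we have $|\varphi_n(0)| \leq C\exp(-Dn^{1/\nu})$ for constants $C, D > 0$, and hence $a_n \leq C^2\exp(-2Dn^{1/\nu})$.

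Next I would choose the gauge function. Set $f(t) = (c\ln(1/t))^{-\nu}$ for a constant $c > 0$ to be fixed. This is a zero-dimensional Hausdorff dimension function (since $t^\alpha(\ln(1/t))^\nu \to 0$ as $t \to 0^+$, so $f \prec t^\alpha$ for every $\alpha > 0$), and its inverse is $f^{-1}(s) = \exp(-s^{-1/\nu}/c)$. The family $\mathcal{G} = \set{f(t^\alpha)^\beta: \alpha, \beta > 0}$ coincides, up to the equivalence $\sim$, with the family $\mathcal{F} = \set{\ln(1/t)^{-\alpha}: 0 < \alpha < \infty}$ in the statement, because $f(t^\alpha)^\beta = (c\alpha)^{-\nu\beta}\ln(1/t)^{-\nu\beta}$ and $\nu\beta$ ranges over all of $(0,\infty)$; thus $\dim_\mathcal{F}$ and $\dim_\mathcal{G}$ agree on any set. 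Taking $b_n = 1/n$, so that $f^{-1}(1/n) = \exp(-n^{1/\nu}/c)$, the choice $c = 1/D$ gives $f^{-1}(1/n) = \exp(-Dn^{1/\nu})$, whence $a_n \leq C^2\exp(-2Dn^{1/\nu}) \leq \exp(-Dn^{1/\nu}) = f^{-1}(b_n)$ for all sufficiently large $n$. This verifies the hypothesis of Theorem \ref{MyDelRio} in the strengthened form of the remark, which permits the non-summable choice $b_n = 1/n$ at the cost of passing to the larger family $\mathcal{G}$.

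Finally I would apply the theorem: its proof bounds the $\mathcal{G}$-dimension of the support of $d\mu_\lambda$ (the eigenvalues form a countable set and contribute nothing to the dimension), yielding $\dim_\mathcal{F}(\supp(d\mu_\lambda)) = \dim_\mathcal{G}(\supp(d\mu_\lambda)) \precsim f(t^2)$. Since $f(t^2) = (2c\ln(1/t))^{-\nu} \sim \ln(1/t)^{-\nu}$, this is exactly the claimed bound for every $\lambda$. I expect the only genuine subtlety to be the matching of the weight decay $a_n \sim e^{-2Dn^{1/\nu}}$ against the inverse gauge, which forces the borderline, non-summable choice $b_n = 1/n$: a summable sequence such as $n^{-1-\epsilon}$ would require the strictly faster decay $a_n \lesssim e^{-n^{(1+\epsilon)/\nu}}$, which SULE does not provide. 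This is precisely why the strengthened version with the larger family $\mathcal{G}$ is needed, and why the constant $c$ must be tuned to $1/D$ to absorb the unknown localization rate $D$.
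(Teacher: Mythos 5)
Your proposal is correct and follows essentially the same route as the paper: identify $d\mu$ as a pure point measure whose weights decay like $\exp(-Dn^{1/\nu})$ via Lemma \ref{DelRio3}, match this against the inverse of a logarithmic gauge, and invoke Theorem \ref{MyDelRio} to bound the dimension of $\supp(d\mu_\lambda)$ for every $\lambda$. If anything you are more careful than the paper's own proof, which does not explicitly address that the required $b_n$ is the non-summable borderline $1/n$ and hence that the strengthened form in the remark following Theorem \ref{MyDelRio} (with the larger family $\set{f(t^\alpha)^\beta : \alpha,\beta>0}$) is the version actually being used.
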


\begin{proof}
	Let $\mu$ be the spectral measure associated to $H$ and $\delta_0,$ and $\mu_\lambda$ the spectral measures of $H_\lambda.$ Observe that we have 
		\begin{equation} 
		\delta_0 = \sum_{n = 1}^\infty \varphi_n(0) \varphi_n.
		\end{equation}
	Set $a_n = \varphi_n(0).$ We can see that 
		\begin{equation}
		d\mu(E) = \sum_{n = 1}^\infty a_n d\delta_{E_n},
		\end{equation}
	where $E_n$ is the eigenvalue associated to the eigenfunction $\varphi_n.$ 
	By Lemma \ref{DelRio3}, we have $|a_n| \leq C\exp(-Dn^{1/\nu}) = C/f^{-1}(n).$ We can see that $f(n) = \left(\frac{-\ln(n)}{D}\right)^{\nu},$ so by Theorem \ref{MyDelRio} we conclude that, for every $\lambda,$ $\dim_\mathcal{F}^+(d\mu_\lambda) \precsim \left(-\ln(t)\right)^{-\nu}.$
\end{proof}

\section{Dynamical bounds} \label{section:QuantDynam}
Consider a separable Hilbert space $\mathscr{H}$ and $H: \mathscr{H} \to \mathscr{H}$ a self adjoint operator. Let us fix a vector $\psi\in \mathscr{H}$ with $\norm{\psi} = 1.$ The time evolution of $\psi$ is given by 
\begin{equation}
\psi(t) = e^{-iHt}\psi.
\end{equation}

We now introduce the following notation:
\begin{equation}
\left\langle A \right\rangle (t) = \INNERPROD{\psi(t)}{A\psi(t)}
\end{equation}
for any operator $A$ on $\mathscr{H},$ and 
\begin{equation}
\left\langle f \right\rangle_T = \left\langle f(t) \right\rangle_T = \frac{1}{T} \int_0^T f(t) dt
\end{equation}
for any measurable function $f.$

We also have the moments of the position operator in $l^2(\Z^\nu):$
\begin{equation}
|X|^m = \sum_{n \in \Z^\nu} |n|^m \INNERPROD{\delta_n}{\cdot}\delta_n.
\end{equation}

\begin{mydef} Let $\mu$ be a finite Borel measure, and let $\rho$ be a Hausdorff dimension function. We say the measure $\mu$ is uniformly $\rho$-H\"older continuous (U$\rho$H) if there exists a constant $C>0$ such that $\mu(I) < C\rho(|I|)$ for sufficiently small intervals $I.$
\end{mydef}

\begin{mydef} Let $H$ be a self-adjoint operator on a Hilbert space $\mathscr{H}.$ We denote the the $\rho$-continuous subspace as
\begin{equation}
\mathscr{H}_{\rho c} := \set{\psi \in \mathscr{H}: \mu_\psi \text{ is } \rho\text{-continuous}}.
\end{equation}
\end{mydef}

\begin{mythm}[Rogers and Taylor \cite{RT1}]\label{UHRTThm}
Let $\mu$ be a finite Borel measure on $\R$ and let $\rho$ be a Hausdorff dimension function. Then
$\mu$ is $\rho$-continuous if and only if for each $\epsilon > 0$ there are mutually singular Borel measures $\mu_1^\epsilon, \mu_2^\epsilon,$ such that $d\mu = d\mu_1^\epsilon + d\mu_2^\epsilon,$ $\mu_1^\epsilon$ is U$\rho$H, and $\mu_2^\epsilon(\R) < \epsilon.$
\end{mythm}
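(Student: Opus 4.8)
The plan is to prove both implications directly, using the local scaling characterization of $\rho$-continuity from Theorem~\ref{RTDef} for the hard (forward) direction and the definitions alone for the easy (backward) direction.

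For the backward implication, suppose the decomposition exists for every $\epsilon$, and let $S$ be any set with $\mu^\rho(S) = 0$. I would first record the general fact that any U$\rho$H measure $\nu$ (with constant $C$) annihilates $\mu^\rho$-null sets: given $\eta > 0$, pick a $\delta$-cover of $S$ realizing $\sum_i \rho(|F_i|) < \eta$, replace each $F_i$ by its convex hull, which in $\R$ is an interval of the same diameter (so the sum is unchanged and the covering is preserved), and for $\delta$ small enough apply the U$\rho$H bound to each such interval to get $\nu(S) \le \sum_i \nu(F_i) \le C\sum_i \rho(|F_i|) < C\eta$; letting $\eta \to 0$ gives $\nu(S) = 0$. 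Applying this to $\mu_1^\epsilon$ yields $\mu(S) = \mu_1^\epsilon(S) + \mu_2^\epsilon(S) \le \mu_2^\epsilon(\R) < \epsilon$, and since $\epsilon$ is arbitrary, $\mu(S) = 0$, i.e. $\mu$ is $\rho$-continuous.

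For the forward implication, assume $\mu$ is $\rho$-continuous. By Theorem~\ref{RTDef}(2) with $A = \R$, the upper $\rho$-derivative $D_\rho\mu(x) := \limsup_{\delta\to 0}\mu((x-\delta,x+\delta))/\rho(\delta)$ is finite for $\mu$-a.e. $x$. I would then exhaust the support by the increasing sets
\[
S_N = \set{x : \mu((x-\delta,x+\delta)) \le N\rho(\delta) \text{ for all } 0 < \delta < 1/N}.
\]
A short argument shows $\set{D_\rho\mu < \infty} \subseteq \bigcup_N S_N$, so $\mu(\R\setminus\bigcup_N S_N) = 0$; since $\mu$ is finite and $S_N \uparrow$, we get $\mu(\R\setminus S_N) \to 0$. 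Given $\epsilon$, fix $N$ with $\mu(\R\setminus S_N) < \epsilon$ and set $\mu_1^\epsilon = \mu(\cdot \cap S_N)$ and $\mu_2^\epsilon = \mu(\cdot\cap(\R\setminus S_N))$; these are mutually singular, sum to $\mu$, and $\mu_2^\epsilon(\R) < \epsilon$. The crux is that $\mu_1^\epsilon$ is U$\rho$H with constant $N$: for an interval $I$ of length $\ell < 1/N$ that meets $S_N$ at some point $y$, the inclusion $I \subseteq [y-\ell, y+\ell]$ together with continuity of $\mu$ from above and continuity of $\rho$ gives $\mu_1^\epsilon(I) \le \mu([y-\ell,y+\ell]) = \lim_{\delta\downarrow\ell}\mu((y-\delta,y+\delta)) \le N\rho(\ell)$, while intervals disjoint from $S_N$ give $\mu_1^\epsilon(I) = 0$ trivially.

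The main obstacle is the forward direction, and specifically two points in the construction of $S_N$. First, the membership condition must transfer from the center $y$ of a small interval to the whole interval \emph{without} any doubling hypothesis on $\rho$; this is exactly why I enclose $I$ in the closed interval $[y-\ell,y+\ell]$ and pass to the limit $\delta \downarrow \ell$, so that the bound appears against $\rho(\ell)=\rho(|I|)$ rather than $\rho(2|I|)$. Second, the $S_N$ must be measurable: for each fixed $\delta$ the sliding-interval function $x \mapsto \mu((x-\delta,x+\delta))$ is Borel, and monotonicity in $\delta$ together with continuity of $\rho$ lets one reduce the uncountable family of conditions defining $S_N$ to a countable one, so I would note this reduction and not belabor the routine verification.
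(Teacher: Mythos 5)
The paper does not actually prove this statement: it is quoted as a known result of Rogers and Taylor with a citation to \cite{RT1}, so there is no in-paper argument to compare against. Your proof is correct and is essentially the standard Rogers--Taylor argument. The backward direction is clean: a U$\rho$H measure annihilates every set of zero $\rho$-Hausdorff measure (your convex-hull step is the right way to pass from arbitrary cover elements to intervals without changing diameters), and then $\mu(S) \le \mu_2^\epsilon(\R) < \epsilon$ for all $\epsilon$. The forward direction correctly handles the two genuine pitfalls: transferring the pointwise bound at $y \in S_N$ to an arbitrary interval $I \ni y$ by enclosing $I$ in $[y-\ell,y+\ell]$ and letting $\delta \downarrow \ell$, which avoids any doubling assumption on $\rho$ (continuity of $\rho$ suffices, and $\rho$ is differentiable by the paper's definition); and the measurability of $S_N$, which reduces to countably many $\delta$ by monotonicity of $\delta \mapsto \mu((x-\delta,x+\delta))$. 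The only caveat worth recording is that your forward direction is not self-contained: it rests entirely on Theorem \ref{RTDef}(2), which the paper likewise states without proof as a Rogers--Taylor result. So what you have really shown is that the decomposition form of $\rho$-continuity follows from the local-scaling form; that is the usual route and is perfectly legitimate given that Theorem \ref{RTDef} is available, but it does not independently reprove the Rogers--Taylor theory from scratch. (A cosmetic point: your construction gives $\mu_1^\epsilon(I) \le N\rho(|I|)$ rather than the strict inequality in the paper's definition of U$\rho$H; replacing $N$ by $N+1$ disposes of this.)
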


\begin{mythm}
Let $\rho$ be a Hausdorff dimension function and $\mathscr{H}_{uh}(\rho) = \set{\psi: \mu_\psi \text{ is U}\rho\text{H}}.$ Then $\mathscr{H}_{uh}(\rho)$ is a vector space and 
\begin{equation}
\overline{\mathscr{H}_{uh}(\rho)} = \mathscr{H}_{\rho c}.
\end{equation}
\end{mythm}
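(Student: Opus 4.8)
The plan is to establish the two assertions separately: first that $\mathscr{H}_{uh}(\rho)$ is a linear subspace, and then the two inclusions comprising the identity $\overline{\mathscr{H}_{uh}(\rho)} = \mathscr{H}_{\rho c}.$ Throughout I write $E(\cdot)$ for the projection-valued spectral measure of $H,$ so that $\mu_\psi(A) = \norm{E(A)\psi}^2$ for every Borel set $A.$ The vector space claim is the easy part: from $\mu_{c\psi} = |c|^2\mu_\psi$ and the elementary bound $\mu_{\psi + \phi}(A) = \norm{E(A)(\psi + \phi)}^2 \leq 2\norm{E(A)\psi}^2 + 2\norm{E(A)\phi}^2 = 2\mu_\psi(A) + 2\mu_\phi(A),$ one reads off that if $\mu_\psi(I) \leq C_1\rho(|I|)$ and $\mu_\phi(I) \leq C_2\rho(|I|)$ hold for all sufficiently small intervals $I,$ then $\mu_{c\psi}$ and $\mu_{\psi + \phi}$ obey the same type of bound with constants $|c|^2 C_1$ and $2(C_1 + C_2)$ respectively. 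Hence $\mathscr{H}_{uh}(\rho)$ is closed under scalar multiplication and addition.

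For the inclusion $\overline{\mathscr{H}_{uh}(\rho)} \subset \mathscr{H}_{\rho c},$ I would first check $\mathscr{H}_{uh}(\rho) \subset \mathscr{H}_{\rho c}$ by a covering argument: if $\mu_\psi$ is U$\rho$H and $S$ satisfies $\mu^\rho(S) = 0,$ then covering $S$ by small intervals $\set{I_i}$ with $\sum_i \rho(|I_i|)$ arbitrarily small yields $\mu_\psi(S) \leq \sum_i \mu_\psi(I_i) \leq C\sum_i \rho(|I_i|) \to 0,$ so $\mu_\psi$ is $\rho$-continuous. It then suffices to show that $\mathscr{H}_{\rho c}$ is closed, and this follows from the continuity of the projections $E(S)$: fixing any Borel set $S$ with $\mu^\rho(S) = 0,$ every $\psi_n \in \mathscr{H}_{\rho c}$ satisfies $\mu_{\psi_n}(S) = \norm{E(S)\psi_n}^2 = 0,$ so $E(S)\psi_n = 0,$ and if $\psi_n \to \psi$ then $E(S)\psi = 0,$ whence $\mu_\psi(S) = 0.$ As $S$ ranges over all $\rho$-null sets this shows $\psi \in \mathscr{H}_{\rho c},$ so the subspace is closed and therefore contains $\overline{\mathscr{H}_{uh}(\rho)}.$

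The substantive direction is $\mathscr{H}_{\rho c} \subset \overline{\mathscr{H}_{uh}(\rho)},$ and this is where Theorem \ref{UHRTThm} enters. Given $\psi \in \mathscr{H}_{\rho c}$ and $\epsilon > 0,$ I apply that theorem to $\mu_\psi$ to obtain a mutually singular decomposition $d\mu_\psi = d\mu_1^\epsilon + d\mu_2^\epsilon$ with $\mu_1^\epsilon$ U$\rho$H and $\mu_2^\epsilon(\R) < \epsilon.$ Choosing a Borel set $S$ carrying $\mu_1^\epsilon$ with $\mu_2^\epsilon(S) = 0$ identifies $\mu_1^\epsilon = \mu_\psi(\cdot \cap S),$ and I set $\psi_\epsilon = E(S)\psi.$ A direct computation with the spectral calculus gives $\mu_{\psi_\epsilon}(A) = \INNERPROD{\psi}{E(A \cap S)\psi} = \mu_\psi(A \cap S) = \mu_1^\epsilon(A),$ so $\mu_{\psi_\epsilon} = \mu_1^\epsilon$ is U$\rho$H and $\psi_\epsilon \in \mathscr{H}_{uh}(\rho).$ Moreover $\norm{\psi - \psi_\epsilon}^2 = \norm{E(\R\setminus S)\psi}^2 = \mu_\psi(\R\setminus S) = \mu_2^\epsilon(\R\setminus S) \leq \mu_2^\epsilon(\R) < \epsilon.$ Letting $\epsilon \to 0$ exhibits $\psi$ as a limit of vectors in $\mathscr{H}_{uh}(\rho),$ which finishes the proof.

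The main obstacle is precisely this approximation step: it rests on the Rogers–Taylor splitting of Theorem \ref{UHRTThm} together with the observation that applying the spectral projection $E(S)$ to $\psi$ exactly realizes the restricted measure $\mu_\psi(\cdot \cap S)$ as a genuine spectral measure, so that the U$\rho$H part of the decomposition is literally the spectral measure of a vector close to $\psi.$ Once this correspondence between measure-theoretic restriction and Hilbert-space projection is secured, the remaining norm estimates are routine.
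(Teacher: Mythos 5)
Your proof is correct and follows essentially the same route as the paper's: the vector-space property via spectral projections and a quadratic estimate, and the hard inclusion via the Rogers--Taylor decomposition of Theorem \ref{UHRTThm} followed by projecting onto the support of the U$\rho$H piece. You additionally supply two details the paper leaves implicit -- that U$\rho$H directly implies $\rho$-continuity by a covering argument, and that $\mathscr{H}_{\rho c}$ is closed -- and your choice of $S$ carrying $\mu_1^\epsilon$ (rather than $\mu_2^\epsilon$) is the cleaner bookkeeping for the final approximation step.
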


\begin{proof}
The only non-trivial vector space property is that $\mathscr{H}_{uh}(\rho)$ is closed under linear combinations, so that is all we will prove here. Let $\psi_1, \psi_2 \in \mathscr{H}_{uh}(\rho),$ and let $\varphi = a\psi_1 + b \psi_2.$ By assumption, there are constants $C_1$ and $C_2$ and $\delta > 0$ such that $\mu_{\psi_1}(I) < C_1\rho(|I|)$ and $\mu_{\psi_2}(I) < C_2 \rho(|I|)$ for all intervals $I$ with $|I| < \delta.$ For such $I,$ let $P_I$ denote the spectral projection on $I.$ Then 
\begin{align}
\begin{split}
\mu_\varphi(I) &= \INNERPROD{\varphi}{P_I \varphi} \\
&= \INNERPROD{a\psi_1 + b \psi_2}{aP_I \psi_1 + bP_I\psi_2} \\
&\leq |a|^2\INNERPROD{\psi_1}{P_I\psi_1} + |b|^2\INNERPROD{\psi_2}{P_i\psi_2} + 2|a||b||\INNERPROD{\psi_1}{P_I\psi_2}|.
\end{split}
\end{align}
Now
\begin{align}
\begin{split}
|\INNERPROD{\psi_1}{P_I\psi_2}| &\leq \sqrt{\INNERPROD{\psi_1}{P_I\psi_1}\INNERPROD{\psi_2}{P_I\psi_2}} \\
&\leq \frac 12 (\INNERPROD{\psi_1}{P_i\psi_1} + \INNERPROD{\psi_2}{P_I\psi_2}),
\end{split}
\end{align}
so we have
\begin{align}
\begin{split}
\mu_\varphi(I) &\leq |a|^2\INNERPROD{\psi_1}{P_I\psi_1} + |b|^2\INNERPROD{\psi_2}{P_i\psi_2} + 2|a||b||\INNERPROD{\psi_1}{P_I\psi_2}|\\
&\leq (|a|^2 + |a||b|)\INNERPROD{\psi_1}{P_I\psi_1} + (|b|^2 + |a||b|)\INNERPROD{\psi_2}{P_I\psi_2}\\
&= (|a|^2 + |a||b|)\mu_{\psi_1}(I) + (|b|^2 + |a||b|)\mu_{\psi_2}(I)\\
&\leq C_1(|a|^2 + |a||b|)\rho(|I|) + C_2(|b|^2 + |a||b|)\rho(|I|) \\
&= C \rho(|I|).
\end{split}
\end{align}
Thus $\mathscr{H}_{uh}(\rho)$ is a vector space.

Since Theorem \ref{UHRTThm} implies that $\mathscr{H}_{uh}(\rho) \subset \mathscr{H}_{\rho c},$ we have $\overline{\mathscr{H}_{uh}(\rho)} \subset \overline{\mathscr{H}_{\rho c}}.$ Since $\mathscr{H}_{\rho c}$ is closed, we have $\overline{\mathscr{H}_{uh}(\rho)} \subset {\mathscr{H}_{\rho c}}.$ By Theorem \ref{UHRTThm}, we can decompose $d\mu_\varphi, \varphi\in \mathscr{H}_{\rho c},$ into a sum of mutually singular measures:  $d\mu_\varphi = d\mu_1^\epsilon + d\mu_2^\epsilon,$ where $d\mu_1^\epsilon$ is U$\rho$H and $d\mu_2^\epsilon(\R) < \epsilon.$ Let $S_\epsilon$ be a Borel set that supports $\mu_2^\epsilon$ such that $\mu_1^\epsilon(S_\epsilon) = 0,$ and let $P_{S_\epsilon}$ denote the spectral projection on $S_\epsilon.$ We have
$$ \varphi = P_{S_\epsilon}\varphi + (1 - P_{S_\epsilon})\varphi$$
with $P_{S_\epsilon}\varphi \in \mathscr{H}_{uh}(\rho)$ and $\norm{(1 - P_{S_\epsilon})\varphi}^2 < \epsilon.$ Thus $\varphi$ is the norm-limit of vectors in $\mathscr{H}_{uh}(\rho),$ so $\mathscr{H}_{\rho c} \subset \overline{\mathscr{H}_{uh}(\rho)}$
\end{proof}

\begin{mylemma}\label{SOMETHING}
If $\mu_\psi$ is U$\rho$H, then there exists a constant $C = C(\psi)$ such that for any $\varphi\in \mathscr{H}$ with $\norm{\varphi}\leq 1,$ we have
\begin{equation}
\langle | \INNERPROD{\varphi}{\psi(t)}|^2\rangle_T < C \rho(1/T).
\end{equation}
\end{mylemma}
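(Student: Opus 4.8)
The plan is to replace the sharp cutoff in $\langle\cdot\rangle_T$ by a smooth exponential weight, Fourier transform the weight into a Poisson kernel, and then feed the U$\rho$H hypothesis into a bound on the resulting Poisson integral that is uniform in the base point. First I would record that $e^{-t/T}\geq e^{-1}$ on $[0,T]$, so
\[
\langle|\INNERPROD{\varphi}{\psi(t)}|^2\rangle_T\leq\frac{e}{T}\int_{-\infty}^\infty e^{-|t|/T}|\INNERPROD{\varphi}{\psi(t)}|^2\,dt,
\]
and it suffices to bound the right-hand side. Since $|\INNERPROD{\varphi}{\psi(t)}|^2=\INNERPROD{\varphi}{e^{-iHt}P_\psi e^{iHt}\varphi}$ with $P_\psi=\INNERPROD{\psi}{\cdot}\psi$, the weighted integral equals $\INNERPROD{\varphi}{A_T\varphi}$, where $A_T=\int_{-\infty}^\infty e^{-|t|/T}e^{-iHt}P_\psi e^{iHt}\,dt$ is a positive operator.

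Next I would estimate $\INNERPROD{\varphi}{A_T\varphi}\leq\norm{A_T}$, which holds because $\norm{\varphi}\leq1$ and which conveniently makes the bound automatically uniform over all admissible $\varphi$ at once. To compute $\norm{A_T}$ I would pass to the spectral representation of $H$ on the cyclic subspace of $\psi$, identifying that subspace with $L^2(d\mu_\psi)$, the vector $\psi$ with the constant function $1$, and $e^{-iHt}$ with multiplication by $e^{-iEt}$. Carrying out the $t$-integral first, the kernel becomes
\[
\int_{-\infty}^\infty e^{-|t|/T}e^{-i(E-E')t}\,dt=\frac{2/T}{(1/T)^2+(E-E')^2}=2\pi P_{1/T}(E-E'),
\]
where $P_\delta(s)=\frac1\pi\frac{\delta}{\delta^2+s^2}$ is the Poisson kernel, so $A_T$ is the integral operator on $L^2(d\mu_\psi)$ with this symmetric, nonnegative kernel. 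A Schur test (testing against the constant function) then yields
\[
\norm{A_T}\leq2\pi\sup_E\int_{-\infty}^\infty P_{1/T}(E-E')\,d\mu_\psi(E')=2\sup_E\IM F_{\mu_\psi}(E+i/T),
\]
since the Poisson integral of $\mu_\psi$ is exactly $\frac1\pi\IM F_{\mu_\psi}(E+i/T)$.

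Finally I would convert the U$\rho$H hypothesis into this uniform Poisson bound. The estimate $\mu_\psi(I)<C\rho(|I|)$ for all small intervals $I$ gives, by the same layer-cake / integration-by-parts argument used in the proof of Theorem \ref{HAUSBOREL} (compare \eqref{COMPFORCOR}--\eqref{ENDCOMPFORCOR}), a bound $\sup_E\IM F_{\mu_\psi}(E+i\delta)\leq C'\rho(\delta)/\delta$ uniform in the base point. Taking $\delta=1/T$ and chaining the three displays gives
\[
\langle|\INNERPROD{\varphi}{\psi(t)}|^2\rangle_T\leq\frac{e}{T}\cdot2C'\,T\rho(1/T)=2eC'\rho(1/T),
\]
which is the claim with $C=2eC'$ depending only on $\psi$ through its U$\rho$H constant. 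I expect the one genuinely delicate step to be this last Poisson bound: one must check that the interval estimate for $\mu_\psi$ upgrades to a bound on the Poisson integral that is both uniform in $E$ and of the sharp order $\rho(\delta)/\delta$, which requires the kernel's decay to dominate the growth of $\mu_\psi$ on dyadic shells about $E$ and uses that $\rho$ is a genuine dimension function to guarantee convergence. The reduction steps are routine once the cross term is absorbed into the positive operator $A_T$, whose operator norm disposes of all $\varphi$ with $\norm{\varphi}\leq1$ simultaneously.
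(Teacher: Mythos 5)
Your strategy is sound and in essence reproduces the classical Strichartz--Last argument that this lemma generalizes; the paper states Lemma \ref{SOMETHING} without proof, so there is no in-text argument to compare against, but your route is the natural one. The specific choices you make --- an exponential weight in place of the usual Gaussian, packaging the time average as the positive operator $A_T$ and bounding $\INNERPROD{\varphi}{A_T\varphi}$ by $\norm{A_T}$ via a Schur test on $L^2(d\mu_\psi)$ --- are all correct (note that $A_T = QA_TQ$ for $Q$ the projection onto the cyclic subspace of $\psi$, since $P_\psi e^{iHt}\varphi$ depends only on $Q\varphi$ and the range of $A_T$ lies in that subspace, so the restriction is legitimate), and they have the pleasant side effect of landing exactly on the Poisson integral $\IM F_{\mu_\psi}(E+i/T)$, which meshes with the Borel-transform machinery of Section \ref{section:BorelSubTheory}. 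The operator-norm step replaces the Cauchy--Schwarz estimate on the complex spectral measure $\mu_{\varphi,\psi}$ that appears in the standard proof, and disposes of all $\varphi$ with $\norm{\varphi}\le 1$ at once, as you say.

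The one place requiring care is the step you yourself flag: upgrading $\mu_\psi(I)<C\rho(|I|)$ to $\sup_E \IM F_{\mu_\psi}(E+i\delta)\leq C'\rho(\delta)/\delta$. Two points. First, the U$\rho$H hypothesis controls intervals by their length, so $\mu_\psi((E-\delta,E+\delta))$ should be bounded by covering with two intervals of length $\delta$, giving $2C\rho(\delta)$ rather than $C\rho(2\delta)$; a general gauge function need not be doubling. Second, the dyadic-shell sum is of the form $\delta^{-1}\sum_{k\geq 0}4^{-k}\rho(2^k\delta)$, and for this to be $O(\rho(\delta)/\delta)$ it is \emph{not} enough that $\rho$ be a strictly increasing function vanishing at $0^+$: one needs a mild regularity hypothesis such as $\rho(t)/t$ non-increasing near $0$, or equivalently a Dini-type condition $\delta^2\int_\delta^1 \rho(s)s^{-3}\,ds = O(\rho(\delta))$. (For an increasing bounded $\rho$ with no such control the shell sum only gives a constant, not $O(\rho(\delta)/\delta)\cdot\delta$.) This condition is satisfied by every gauge appearing in the paper ($t^\alpha$, $\ln(1/t)^{-\alpha}$, the functions $G(t)^{\alpha}$, etc.) and is of the same nature as the integrability used at \eqref{ENDCOMPFORCOR} in the proof of Theorem \ref{HAUSBOREL}, but it should be stated explicitly as a hypothesis on $\rho$. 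With that caveat the proof closes.
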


\begin{mythm}\label{DYNCOMPRES} Suppose $\mu_\psi$ is U$\rho$H. Then there exists a constant $C = C(\psi)$ such that for any compact operator $A, p \in \N,$ and $T> 0:$
\begin{equation}
\left\langle|\left\langle A \right\rangle|\right\rangle_T < C^{1/p} \norm{A}_p \rho(1/T)^{1/p}.
\end{equation}
\end{mythm}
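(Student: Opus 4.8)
The plan is to reduce the estimate to Lemma \ref{SOMETHING} by expanding the compact operator $A$ in its singular value decomposition and then interpolating between two complementary bounds on the time-averaged overlaps. First I would write $A$ in its canonical Schmidt form $A = \sum_n s_n \INNERPROD{\phi_n}{\cdot}\xi_n$, where $s_n \geq 0$ are the singular values (so that $\norm{A}_p^p = \sum_n s_n^p$) and $\set{\phi_n}, \set{\xi_n}$ are orthonormal systems. Substituting $A\psi(t) = \sum_n s_n \INNERPROD{\phi_n}{\psi(t)}\xi_n$ into $\left\langle A\right\rangle(t) = \INNERPROD{\psi(t)}{A\psi(t)}$ and taking absolute values gives the pointwise-in-$t$ bound
\[
|\left\langle A\right\rangle(t)| \leq \sum_n s_n a_n(t), \qquad a_n(t) := |\INNERPROD{\phi_n}{\psi(t)}|\,|\INNERPROD{\xi_n}{\psi(t)}|.
\]
Averaging in time (justified by Tonelli, since the summands are nonnegative) yields $\left\langle|\left\langle A\right\rangle|\right\rangle_T \leq \sum_n s_n d_n$, where $d_n := \left\langle a_n\right\rangle_T$.

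The heart of the argument is to establish two bounds on the numbers $d_n$. The first is a uniform smallness estimate: applying Cauchy--Schwarz to the time integral defining $d_n$ and then invoking Lemma \ref{SOMETHING} with the two unit vectors $\phi_n$ and $\xi_n$ gives
\[
d_n \leq \left(\left\langle |\INNERPROD{\phi_n}{\psi(t)}|^2\right\rangle_T\right)^{1/2}\left(\left\langle |\INNERPROD{\xi_n}{\psi(t)}|^2\right\rangle_T\right)^{1/2} < C\rho(1/T),
\]
with $C = C(\psi)$ independent of $n$. The second is a summability bound from Bessel's inequality: for each fixed $t$, Cauchy--Schwarz in the index $n$ gives $\sum_n a_n(t) \leq \big(\sum_n |\INNERPROD{\phi_n}{\psi(t)}|^2\big)^{1/2}\big(\sum_n |\INNERPROD{\xi_n}{\psi(t)}|^2\big)^{1/2} \leq \norm{\psi(t)}^2 = 1$, so that $\sum_n d_n \leq 1$.

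Finally I would combine these by Hölder's inequality in $n$ with conjugate exponents $p$ and $p' = p/(p-1)$, writing $s_n d_n = (s_n d_n^{1/p})\,d_n^{1/p'}$:
\[
\sum_n s_n d_n \leq \Big(\sum_n s_n^p d_n\Big)^{1/p}\Big(\sum_n d_n\Big)^{1/p'} < \big(C\rho(1/T)\norm{A}_p^p\big)^{1/p}\cdot 1^{1/p'} = C^{1/p}\norm{A}_p\,\rho(1/T)^{1/p},
\]
where the first factor uses the uniform bound $d_n < C\rho(1/T)$ together with $\sum_n s_n^p = \norm{A}_p^p$, and the second uses $\sum_n d_n \leq 1$. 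The endpoint $p=1$ (where $p'=\infty$) is the direct estimate $\sum_n s_n d_n < C\rho(1/T)\sum_n s_n = C\norm{A}_1\rho(1/T)$, handled separately.

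I expect the only delicate points to be bookkeeping rather than conceptual: justifying the interchange of summation and time integration (automatic by nonnegativity via Tonelli), verifying that the constant furnished by Lemma \ref{SOMETHING} is uniform over the orthonormal test vectors $\phi_n,\xi_n$, and treating the endpoint $p=1$. The conceptual crux---obtaining the Schatten-$p$ norm rather than the trace norm---is resolved entirely by the interpolation step, which trades the uniform $\rho(1/T)$-smallness of each $d_n$ against their $\ell^1$-summability.
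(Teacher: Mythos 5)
Your proposal is correct and follows essentially the same route as the paper's proof: singular value decomposition of $A$, Cauchy--Schwarz in time combined with Lemma \ref{SOMETHING} for the uniform bound $d_n < C\rho(1/T)$, the Bessel/normalization identity for $\ell^1$-summability, and H\"older in $n$ to interpolate. The only difference is cosmetic bookkeeping in the interpolation step (you split $s_n d_n = (s_n d_n^{1/p})d_n^{1/p'}$, while the paper bounds $\norm{d}_{l^q}^q \leq (\sup_n d_n)^{q-1}\sum_n d_n$), and both yield the identical estimate.
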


\begin{proof}
Since $A$ is compact, the spectral theorem guarantees the existence of orthonormal bases $\set{\psi_n}_{n = 1}^\infty, \set{\varphi_n}_{n = 1}^\infty,$ and a monotonely decreasing sequence $\set{E_n}_{n = 1}^\infty,, E_n \geq 0,$ such that $A$ is given by the norm-convergent sum
\begin{equation}
A = \sum_{n = 1}^\infty E_n\INNERPROD{\varphi_n}{\cdot}\psi_n.
\end{equation}
Moreover, $\norm{A}_p = \norm{E_n}_{l^p}.$ Thus we have
\begin{align}
\langle |\langle A \rangle | \rangle_T &= \left\langle \left| \sum_{n = 1}^\infty E_n \INNERPROD{\varphi_n}{\psi(t)}\INNERPROD{\psi(t)}{\psi_n}\right|\right\rangle_T \\
&\leq \sum_{n = 1}^\infty E_n\langle|\INNERPROD{\varphi_n}{\psi(t)}\INNERPROD{\psi(t)}{\psi_n}|\rangle_T \\
&\leq \sum_{n = 1}^\infty E_n (\langle|\INNERPROD{\varphi_n}{\psi(t)}|^2\rangle_T)^{1/2} (\langle|\INNERPROD{\psi(t)}{\psi_n}|^2\rangle_T)^{1/2}.
\end{align}
If we let $p,q\in \N$ be such that $1/p + 1/q = 1,$ then we may apply H\"older's inequality to obtain
\begin{align}
\langle |\langle A \rangle | \rangle_T &\leq \norm{E_n}_{l^p} \norm{(\langle|\INNERPROD{\varphi_n}{\psi(t)}|^2\rangle_T)^{1/2} (\langle|\INNERPROD{\psi(t)}{\psi_n}|^2\rangle_T)^{1/2}}_{l^q}\\
&\leq \norm{A}_p \norm{\langle|\INNERPROD{\varphi_n}{\psi(t)}|^2\rangle_T}_{l^q}^{1/2} \norm{\langle|\INNERPROD{\psi(t)}{\psi_n}|^2\rangle_T}_{l^q}^{1/2}.\label{PNORMINEQ}
\end{align}
Moreover, by Lemma \ref{SOMETHING}, we have
\begin{align*}
\langle|\INNERPROD{\varphi_n}{\psi(t)}|^2\rangle_T &< C(\psi) \rho(1/T) \\
\langle|\INNERPROD{\psi(t)}{\psi_n}|^2\rangle_T &< C(\psi) \rho(1/T).
\end{align*}
Since the $\psi_n$ and $\varphi_n$ form orthonormal bases, and since $e^{-iHt}$ is unitary, we have
\begin{equation}
\sum_{n = 1}^\infty \langle | \INNERPROD{\varphi_n}{\psi(t)}|^2\rangle_T = \sum_{n = 1}^\infty \langle | \INNERPROD{\psi(t)}{\psi_n}|^2\rangle_T = \norm{\psi}^2 = 1.
\end{equation}
Thus
\begin{align}
\begin{split}\label{QNORMINEQ}
\norm{\langle | \INNERPROD{\varphi_n}{\psi(t)}|^2\rangle_T}_{l^q}^q &< (C(\psi) \rho(1/T))^{q - 1} \\
\norm{\langle | \INNERPROD{\psi(t)}{\psi_n}|^2\rangle_T}_{l^q}^q &< (C(\psi) \rho(1/T))^{q - 1}.
\end{split}
\end{align}
Putting (\ref{QNORMINEQ}) and (\ref{PNORMINEQ}) together, we have
\begin{equation}
\langle |\langle A \rangle | \rangle_T < \norm{A}_p (C(\psi) \rho(1/T))^{(q-1)/q} = C(\psi)^{1/p} \norm{A}_p \rho(1/T)^{1/p},
\end{equation}
which completes our proof.

\end{proof}

Now we can prove Theorem \ref{POSBDTHM}:

\begin{proof}[Proof of Theorem \ref{POSBDTHM}.]
Let $\psi_{\rho c} = P_{\rho c}\psi, \psi_{\rho s} = (1 - P_{\rho c})\psi.$ By Theorem \ref{UHRTThm}, there exist mutually singular Borel measures, $\mu_1, \mu_2$ such that $d\mu_{\psi_{\rho c}} = d\mu_1 + d\mu_2,$ where $\mu_1$ is U$\rho$H and $\mu_2(\R) < \frac 1 2 \norm{\psi_{\rho c}}^2.$ Let $S_1$ be a Borel set that supports $\mu_1$ and $\mu_2(S_1) = 0.$ Let $P_{S_1}$ denote the spectral projection on $S_1$ and set $\psi_1 = P_{S_1} \psi_{\rho c}$ and $\psi_2 = (1 - P_{S_1})\psi_{\rho c} + \psi_{\rho s}.$ Clearly $\psi = \psi_1 + \psi_2$ and $\psi_1 \perp \psi_2.$ Moreover, we have $d\mu_{\psi_1} = d\mu_1,$ so $\psi_1$ is U$\rho$H and
\begin{equation}
\norm{\psi_1}^2 = \int d\mu_{\psi_1} = \int d\mu_{\psi_{\rho c}} - \int d\mu_2 \geq \frac 12 \norm{\psi_{\rho c}}^2
\end{equation}
and
\begin{equation}
1 = \norm{\psi_1}^2 + \norm{\psi_2}^2.
\end{equation}

Let $P_N$ be the projection on the sphere of radius $N \in [0,\infty),$ defined by 
\begin{equation}
P_N = \sum_{|n| \leq N} \INNERPROD{\delta_n}{\cdot}\delta_n.
\end{equation}
We can see that 
\begin{align*}
Tr(P_N) &= \sum_{n \in \Z^\nu}\INNERPROD{\delta_n}{P_N\delta_n} \\
&= \sum_{n \in \Z^\nu}\INNERPROD{\delta_n}{\sum_{|k| \leq N} \INNERPROD{\delta_k}{\delta_n}\delta_k}\\
&= \sum_{n \in \Z^\nu}\sum_{|k| \leq N} \INNERPROD{\delta_n}{\delta_k}\INNERPROD{\delta_k}{\delta_n}\\
&= \sum_{|n| \leq N} 1\\
&= c_\nu N^\nu.
\end{align*}
Where $c_\nu$ depends only on the space dimension $\nu.$ Thus $P_N$ is compact and it follows from Theorem \ref{DYNCOMPRES} that there exists a constant $C_{1},$ which depends only on $\psi_1,$ such that for $T$ sufficiently large and $N > 0,$ 
\begin{align}
\begin{split}
\langle \norm{P_N\psi_1(t)}^2\rangle_T &= \langle \INNERPROD{\psi_1(t)}{P_N\psi_1(t)}\rangle_T \\
&< C_1 Tr(P_N) \rho(1/T) \\
&< c_\nu C_\psi N^\nu \rho(1/T).
\end{split}
\end{align}
Moreover, we have
\begin{align*}
\langle \norm{P_{N}\psi(t)}^2\rangle_T &\leq \langle (\norm{P_N\psi_1(t)} + \norm{P_N\psi_2(t)})^2\rangle_T \\
&\leq \langle( \norm{P_N\psi_1(t)} + \norm{\psi_2})^2\rangle_T \\
&\leq (\sqrt{\langle \norm{ P_N \psi_1(t)}^2\rangle_T} + \norm{\psi_2})^2.
\end{align*}
Now if we set $$N_T = \left(\frac{\norm{\psi_1}^4}{64C_1c_\nu\rho(1/T)}\right)^{1/\nu}$$ then we have 
\begin{align*}
\langle \norm{P_{N}\psi(t)}^2\rangle_T &< \left(\frac{\norm{\psi_1}^2}{8} + \norm{\psi_2}\right)^2\\
&= \frac{\norm{\psi_1}^4}{64} + \norm{\psi_2}^2 + \frac 1 4 \norm{\psi_1}^2\norm{\psi_2} \\
&< \norm{\psi_2}^2 + \frac 12 \norm{\psi_1}^2\\
&= 1 - \frac 12 \norm{\psi_1}^2.
\end{align*}
Since 
\begin{equation}
\langle \norm {P_{N_T}\psi(t)}^2\rangle_T + \langle\norm{(1 - P_{N_T})\psi(t)}^2\rangle_T = 1,
\end{equation}
we have
\begin{equation}
\langle\norm{(1 - P_{N_T})\psi(t)}^2\rangle_T > \frac 12 \norm{\psi_1}^2.
\end{equation}
Hence
\begin{align}
\langle\langle |X|^m\rangle\rangle_T &= \left\langle \INNERPROD{\psi(t)}{\sum_{n \in \Z^\nu}|n|^m \INNERPROD{\delta_n}{\psi(t)}\delta_n}\right\rangle_T \\
&\geq  \left\langle \INNERPROD{\psi(t)}{\sum_{|n| \geq N_T}N_T^m \INNERPROD{\delta_n}{\psi(t)}\delta_n}\right\rangle_T\\
&= N_T^m \left\langle\INNERPROD{\psi(t)}{(1-P_{N_T})\psi(t)}\right\rangle_T\\
&\geq \frac 12 \norm{\psi_1}^2 N_T^m\\
&= \frac 12 \norm{\psi_1}^2 \left(\frac{\norm{\psi_1}^4}{64 C_1c\nu}\right)^{m/\nu}\rho(1/T)^{-m/\nu}.
\end{align}
This completes our proof.

\end{proof}

\section*{Acknowledgement}
We would like to thank S. Jitomirskaya for presenting us with the problem that lead to this work, and for many useful suggestions and comments on earlier versions of the manuscript. We are also very grateful for W. Liu's numerous useful suggestions, discussions, and careful reading of earlier versions of the manuscript. This research was partially supported by NSF DMS-1901462, DMS-2052899, DMS-2000345, and DMS-2052572.


\bibliographystyle{abbrv} 
\bibliography{CurrentProgress22.bib}

\end{document}